\def\verbatim{\interlinepenalty\@M \@verbatim
  \leftskip\@totalleftmargin\advance\leftskip2pc
  \frenchspacing\@vobeyspaces \@xverbatim}
\newtheorem{thm}{Theorem}[section]
\newtheorem{cor}[thm]{Corollary}
\newtheorem{lem}[thm]{Lemma}
\newtheorem{prop}[thm]{Proposition}
\newtheorem{claim}[thm]{Claim}
\theoremstyle{definition}
\newtheorem{defn}{Definition}[section]
\theoremstyle{remark}
\newtheorem{rem}{Remark}[section]
\numberwithin{equation}{section}
\newtheorem{ex}[thm]{Example}
\newcommand{\va}{\left|}
\newcommand{\vb}{\right|}
\newcommand{\vd}{\right\| }
\newcommand{\vc}{\left\| }
\newcommand{\pl}{\left(}
\newcommand{\pr}{\right)}
\newcommand{\ql}{\left[}
\newcommand{\qr}{\right]}
\newcommand{\tl}{\left\{}
\newcommand{\tr}{\right\}}
\newcommand{\ra}{\rightarrow}
\newcommand{\de}{\mathbb{D}}
\newcommand{\ka}{\mathbb{K}}
\newcommand{\en}{\mathbb{N}}
\newcommand{\er}{\mathbb{R}}
\newcommand{\ce}{\mathbb{C}}
\newcommand{\qu}{\mathbb{Q}}
\newcommand{\hac}{H^{\infty}}
\newcommand{\ma}{\mathfrak{M}}
\newcommand{\ze}{\mathbf{z}}
\newcommand{\we}{\mathbf{w}}
\newcommand{\ve}{\mathbf{v}}
\newcommand{\mil}{\ma_0}
\newcommand{\mim}{\ma'_0}
\newcommand{\muz}{\ma_{\ze, \ef}}
\newcommand{\mak}{\ma_1}
\newcommand{\maz}{\ma_{\ze}}
\newcommand{\mut}{A_{\ze , \ef}^{\kai}}
\newcommand{\myt}{Z_{\ze , \ef}^{\kai}}
\newcommand{\ef}{\mathfrak{u}}
\newcommand{\af}{\mathfrak{v}}
\newcommand{\com}{\mathtt{Comp}}
\newcommand{\cer}{\mathtt{Z}}
\newcommand{\vp}{\zeta}
\newcommand{\eli}{\mathbf{l}}
\newcommand{\emi}{\mathbf{m}}
\newcommand{\kai}{\mathbf{k}}
\DeclareMathOperator{\ca}{card}
\DeclareMathOperator{\cl}{\mathrm{cl}_{\ma}}
\DeclareMathOperator{\cn}{\mathrm{cl}_{\beta \en}}
\DeclareMathOperator{\cw}{\mathrm{cl}_{\mathit{A}}}
\title[Nonmaximal ideals]{Nonmaximal ideals and the Berkovich space of the algebra of bounded analytic functions}
\author{Jes\'us Araujo}
\address{Departamento de Matem\'aticas,
Estad\'{\i}stica y Computaci\'on\\ Universidad de Cantabria\\
Facultad de Ciencias\\ Avda.
de los Castros, s. n.\\ E-39071 Santander, Spain}
\email{araujoj@unican.es}
\keywords{Berkovich space; multiplicative spectrum; nonarchimedean analytic functions}
\thanks{Research
partially supported by
the Spanish Ministry of Science and Education (Grant MTM2011-23118).}
\date{}
\begin{document}

\begin{abstract}
We prove that the  Berkovich space   of the  algebra of bounded analytic functions on the open unit disk of an  algebraically closed nonarchimedean field   contains multiplicative seminorms that are not norms and whose kernel is not  a  maximal ideal. We also prove that in general these seminorms are not univocally determined by their kernels, and provide a method for obtaining families of different seminorms sharing the same kernel. On the other hand, we prove that there are also kernels that cannot be obtained by that method. The relation with the Berkovich space of the Tate algebra is also given.
\end{abstract}

\maketitle

\section{Introduction}

Throughout $\ka$ is an algebraically closed field complete with respect to a (nontrivial) nonarchimedean absolute value $\va \cdot \vb$ and $\hac$  denotes the space of ($\ka$-valued) bounded analytic functions on the open   disk $\de := \tl z \in \ka   :    \va z \vb <1 \tr$, that is, the space   of  bounded power series  on $\de$. When endowed with the Gauss norm (which coincides with the sup  norm $\vc \cdot \vd$), the space $\hac$ becomes a Banach algebra. We  remark that, given a nonzero $f (z) = \sum_0^{\infty} a_n z^n \in \hac$, the value $$\vc f \vd = \sup_{n \ge 0} \va a_n \vb = \sup_{z \in \de} \va f(z) \vb$$ {\em does not} necessarily belong to the value group $\va \ka^{\times} \vb := \tl \va z \vb : z \in \ka\setminus \tl 0 \tr \tr$.

 A remarkable difference with respect to the complex case is that in a Banach algebra over $\ka$ there can be maximal ideals that are not the kernel of any multiplicative linear functional. For this reason, the classical definition of spectrum (or maximal ideal space)  of a complex Banach algebra does not carry over to the ultrametric setting.
 Nevertheless, the standard definition of Berkovich space (or multiplicative spectrum)
    yields the usual spectrum when adapted to the complex context 
 (see Definition~\ref{hurrodemarta}  and Remark~\ref{gafarrota}).

Not much is known about the Berkovich space $\ma$ of $\hac$.
Points in $\ma$ are {\em seminorms}, and
 theoretically  they can be divided into four types, namely: 
\begin{enumerate}[I.]
\item Points whose kernel is a maximal ideal of codimension $1$,
\item Points whose kernel is a maximal ideal of codimension different from $1$,
\item Points whose kernel is trivial, that is, equal to $\tl 0 \tr$.
\item Points whose kernel is a nonzero nonmaximal  {\em prime}  ideal.
\end{enumerate}

Points of type I can be identified with those in $\de$ (see \cite{EM}),
as each of them
is the  absolute value evaluation $\delta_z$ at a point $z$ of $\de$ (that is,  $\delta_z (f) = \va f(z) \vb$  for every $f \in \hac$).

Points of type II can be obtained by the use of ultrafilters and, in particular, {\em regular} sequences
(a sequence $(z_n)$ in $\de$  is said to be regular if $\inf_{n \in \en} \prod_{m \neq n} \va z_n - z_m \vb >0$). The
 key point in studying regular sequences consists of identifying each of them with a bounded sequence in $\ka$ via the map
 $\mathbf{i} : \hac \ra \ell^{\infty}$, $f \in \hac \mapsto (f(z_n)) \in \ell^{\infty}$. Given a regular sequence
$(z_n)$, every maximal ideal containing the ideal $\mathfrak{I}$ of all functions $f \in \hac$
 vanishing at every $z_n$ can be identified
with an ultrafilter in $\en$, that is, a point in   the Stone-\v{C}ech compactification $\beta \en$ of $\en$ (see \cite[Corollary 4.7]{vdP}). Thus, given a regular sequence $\ze = (z_n)$ and a nonprincipal ultrafilter $\ef$ in $\en$ (that is, a point $\ef \in \beta \en \setminus \en$), the seminorm $$\delta_{\ze, \ef} := \lim_{\ef} \delta_{z_n} $$is a point of type II.
In this paper, we say that a sequence $(z_n)$ in $\de$ is regular with respect to a nonprincipal ultrafilter $\ef$ in $\en$ if there exists $C \in \ef$ such that $(z_n)_{n \in C}$ is regular, that is,
$$\inf_{n \in C} \prod_{\substack{m \in C \\ m \neq n}} \va z_n - z_m \vb >0.$$

Points of type III are obviously given by multiplicative norms.
 The simplest case of a multiplicative norm is of the form $\zeta_{D}$, for any nontrivial disk $D$   contained in $\de$, where $$\zeta_D (f) := \sup_{z \in D} \va f(z) \vb$$ for all $f \in \hac$.

Our goal in this paper is to prove that the set of points of type IV is nonempty, and to study some of its features.
Note
that the existence of a nonzero nonmaximal closed prime  ideal does not necessarily imply the existence of points of type IV.
 The
question of the existence of
 such an ideal in $\hac$, raised in \cite{vdP},  remained unknown for many years, until it was finally  solved (in the positive) in \cite{E3}. On the other hand, note that, from the proof of \cite[Proposition 1.2.3]{B}, the algebra $\ell^{\infty}$ contains no nonzero nonmaximal prime ideals. In our case,    $\hac$ and $\ell^{\infty}$ are isometrically isomorphic   as   Banach spaces, but the product in $\hac$ determines a product in $\ell^{\infty}$ different from the usual one, allowing the existence of seminorms that are not norms and have  a nonmaximal ideal as a kernel.

\begin{defn}\label{hurrodemarta}
Let $A$ be a unital commutative Banach algebra  over $\ka$. A
map $\varphi : A \ra [0, + \infty)$ is a continuous multiplicative
{\em ring} seminorm on $A$ if the following conditions hold:
\begin{enumerate}
\item $\varphi (0_A ) = 0$ and $\varphi (1_A) =1$.
\item $\varphi  (ab) = \varphi (a) \ \varphi  (b)$ for all $a, b \in A$.
\item $\varphi (a + b) \le  \varphi (a) + \varphi (b)  $ for all $a, b \in A$.
\item $\varphi  (a) \le \vc a \vd $ for all $a \in A$.
\end{enumerate}
\end{defn}

\begin{rem}\label{gafarrota}
We assume that $\vc 1_A \vd =1$. It is straightforward to show (see for instance  \cite[Lemma 1.7]{E1}) that every continuous multiplicative ring seminorm
is also an {\em ultrametric algebra} seminorm on $A$, that is,  it further satisfies:
\begin{enumerate}
\setcounter{enumi}{4}
\item $\varphi (\lambda a ) = \va \lambda \vb \varphi (a)$ for all $\lambda \in \ka$ and $a \in A$.
\item $\varphi (a + b) \le \max \tl \varphi (a) , \varphi (b) \tr$ for all $a, b \in A$.
\end{enumerate} 
\end{rem}

The Berkovich space (or multiplicative spectrum) $\mathscr{M} (A)$ of $A$ is the set of all  continuous multiplicative (in any of the equivalent senses of Definition~\ref{hurrodemarta} and Remark~\ref{gafarrota})  seminorms endowed with the topology of
simple convergence, that is,
a net $\pl \zeta_{\lambda } \pr_{\lambda \in \Lambda}$ in $\mathscr{M}(A)$ converges to $\zeta_0 \in \mathscr{M} (A)$ if $\pl \zeta_{\lambda } (a) \pr_{\lambda \in \Lambda}$ converges to $\zeta_0 (a)$ for all $a \in A$.
It is well known that $\mathscr{M}(A)$ is Hausdorff and compact (see for instance \cite[Theorem 1.2.1]{B} or \cite[Theorem 1.11]{E1}).
 Indeed,
  the multiplicative spectrum of some algebras
  is a compactification of $\de$ (see \cite{B,BR,E1,E2,Gb,S}). Nevertheless, in our case, it is unknown if $\de$ is dense   in  $\ma  = \mathscr{M} \pl \hac \pr$, which is a nonarchimedean version of the Corona problem
(a related problem was solved in \cite{vdP}). In fact, what is now known is that  $\de$ is   dense in the subset of all seminorms whose kernel is a maximal ideal (see \cite{E4}).

It is easy to check that the kernel
$\ker \vp := \tl f \in A : \vp (f) = 0 \tr$
of every element $\vp \in \mathscr{M} (A)$ is a closed prime ideal of $A$. When we say that a seminorm has {\em maximal kernel} or {\em nonzero nonmaximal  kernel}, we mean that its kernel is a {\em maximal ideal} or a {\em nonzero nonmaximal  ideal}, respectively.

 We see that
if $D$ is a (closed or open) disk, then  $\vp_D$ belongs to $\ma$.
 Also, since  $\va \ka^{\times} \vb$ is dense in $\er^+$, $\vp_{D^+ (z, r)} =   \vp_{D^- (z, r)}$  for $z \in \de$ and $r \in (0,1)$ (where $D^+ (z, r)$ and $D^- (z,r)$ are the closed and open disks with center $z$ and radius $r$, respectively).

Recall that, given $f \in \hac$ and $z_0 \in \de$, $f$ can be written by $f(z) = \sum_{n=1} a_n (z-z_0)^n$ for every $z \in \de$ (see for instance \cite[Theorem 25.1]{Sch}), and that $z_0$ is a zero of $f$ of multiplicity $m \ge 1$ if there is $g \in \hac$ with $g(z_0) \neq 0$ such that $f(z) = (z-z_0)^m g(z)$ for all $z$. For $E \subset \de$, we denote by $\cer (f,  E)$ the number of zeros of $f$ in $E$
(by this we will always mean {\em taking into account multiplicities}).

For $r>0$,  $C (0,r)$ will be the set of all $z $ with  $\va z \vb =r$.
If $D^+ (z, r) \subset C(0, \va z \vb)$ and $w_1, \ldots, w_n$ are the zeros of $f \in \hac$ with absolute value $\va z \vb$, then we define
$$\xi_{D^+ (z, r)} (f) :=
 \left\{ \begin{array}{ll} r^{\cer (f, D^+ (z, r))} \prod_{\va z- w_i \vb >r} \va z- w_i \vb & \mbox{if }   \cer (f, C(0, \va z \vb )) \neq 0,
\\
1 & \mbox{if }   \cer (f, C(0, \va z \vb )) = 0,
\end{array} \right.
$$
where we understand that $\prod_{\va z- w_i \vb >r} \va z- w_i \vb =1$ if  $\va z- w_i \vb \le r $ for all $i=1, \ldots, n$.

\medskip

In this paper we mainly study the set $\mil$ of all seminorms of the form $\varphi := \lim_{\ef} \zeta_{D^+ \pl z_n , r_n \pr}$, where $\ef$ is any nonprincipal ultrafilter in $\en$, $(z_n)$ is any  sequence in $\de$ with $\lim_{n \ra \infty} \va z_n \vb =1$, and $(r_n)$ is any sequence in $(0,1)$.
Obviously, in many cases $\varphi := \lim_{\ef} \zeta_{D^+ \pl z_n , r_n \pr} = \vc \cdot \vd$. This happens for instance when the set of all $n \in \en$ such that $\va z_n \vb \le r_n$ belongs to $\ef$. But even in this case we can also write $\vc \cdot \vd = \lim_{\ef} \zeta_{D^+ \pl z_n , \va z_n \vb^2 \pr} $, so we can assume that $r_n < \va z_n \vb$ for all $n$. It is clear that, if $\lim_{\ef} r_n >0$, then there exist $r \in (0,1)$ and a sequence $\kai = (k_n)$ in $\en$ (not necessarily unique) such that $0 < r = \lim_{\ef} {r_n}^{k_n} < 1$.
We will see in Corollary~\ref{pernia}  that   $$\varphi = \varphi_{\ze, \ef}^{\kai , r} := \lim_{\ef} \zeta_{D^+ \pl z_n , \sqrt[k_n]{r} \pr}.$$   This means that, when $\lim_{\ef} r_n >0$, we can restrict ourselves to seminorms of the special form $\varphi_{\ze, \ef}^{\kai , r}$.
On the other hand,
 it is very easy to see that, if $\lim_{\ef} r_n =0$, then $\lim_{\ef} \zeta_{D^+ \pl z_n , r_n \pr } = \delta_{\ze, \ef} := \lim_{\ef} \delta_{z_n}$. We
 also prove that, in fact, all points in $\mil$ can be written in the form $\delta_{\ze, \ef}$ (see Theorem~\ref{nadabouche}).

We can say more. Given    $\varphi   \in \mil$, there exist a sequence $(w_n )$   in $\de$ with $\lim_{n \ra \infty} \va w_n \vb =1$ and a sequence $(s_n) $    in $(0,1)$ such that  the disks $D^+ \pl w_n , s_n \pr$ are pairwise disjoint and  $\varphi = \lim_{\ef} \zeta_{D^+ \pl w_n , s_n \pr}$ (see Corollaries~\ref{byemarcal} and~\ref{puigdemont}).

We also deal here with two subsets of $\ma_0$: $\mim$ and $\mak$. The
set $\mim$  consists of all the limits of the above form $\lim_{\ef} \zeta_{D^+ \pl z_n , r_n \pr}$, where   $(z_n)$ is   {\em regular with respect to} $\ef$ and all the disks $D^+ \pl z_n , r_n \pr$, $n \in C$, are pairwise disjoint for some $C \in \ef$.
If we drop the requirement that $(z_n)$  be regular with respect to $\ef$, then the results we obtain are quite different (see Proposition~\ref{gertrudisrivota}; see also Corollary~\ref{puigdemont}).

As for the second set, $\mak$, it has the remarkable property that no seminorm in it is determined by its kernel, that is, there are many other seminorms having the same kernel. For the description of $\mak$,   we generalize the notion of regular sequence as follows:
Given a sequence $\ze = (z_n)$ in $\de$
  and a nonprincipal ultrafilter $\ef$ in $\en$, we denote by $\com_{\ef}  (\ze)$ the set of all sequences $\kai = (k_n)$ in $\en$ 
for which
there exists $C_{\kai} \in \ef$ such that $$\inf_{n \in C_{\kai}} \prod_{\substack{m \in C_{\kai} \\ m \neq n}} \va z_n - z_m \vb^{k_m} >0 .$$

Now, for a nonprincipal ultrafilter $\ef$ of $\en$, $\kai \in \com_{\ef}  (\ze)$ and $r \in (0,1)$,
we set $\zeta_{\ze, \ef}^{\kai, r} := \varphi_{\ze, \ef}^{\kai, r}$, that is, $$\zeta_{\ze, \ef}^{\kai, r}   := \lim_{\ef} \zeta_{D^+ \pl z_n , \sqrt[k_n]{r} \pr} ,$$ and
$$\pl \zeta_{\ze, \ef}^{\kai, 0} , \zeta_{\ze, \ef}^{\kai, 1} \pr := \tl \zeta_{\ze, \ef}^{\kai, r}   : r \in (0,1) \tr .$$
We  put, for $\ze$ and $\ef$ fixed,
 $ \muz :=  \bigcup_{\kai  \in \com_{\ef}  (\ze)} \pl \zeta_{\ze, \ef}^{\kai, 0} , \zeta_{\ze, \ef}^{\kai, 1} \pr    $,
and more in general  $\maz := \bigcup_{\ef \in \beta \en \setminus \en} \muz $.
Finally, we set $\mak := \bigcup_{\ze} \maz$.

Note that, in principle, a seminorm  $\varphi_{\ze, \ef}^{\kai, r} \in \mim$ cannot be written as  $\zeta_{\ze, \ef}^{\kai, r}$ because $\kai$ does not necessarily belong to $\com_{\ef} (\ze)$ (nevertheless,  in general it does, as can be seen in  Theorem~\ref{juntafaculta}).
On the other hand, $\mak$ is indeed a subset of $\mim$ (see Remark~\ref{nonada}). But, of course, the fact that a seminorm $\zeta_{\ze, \ef}^{\kai, r} \in \mak$ belongs to $\mim$ does not necessarily imply that there exists $C \in \ef$ such that all disks $D^+ \pl z_n, \sqrt[k_n]{r}  \pr$ are pairwise disjoint for $n \in C$. Nevertheless, we have the following remark that will be used later.
\begin{rem}\label{honble}
 If there exists $C \in \ef$ with $M := \inf_{n \in C} \prod_{\substack{m \in C \\ m \neq n}} \va z_n - z_m \vb^{k_m} >0$ and  $0 < r_0 <M$, then   all the  disks $D^+ \pl z_n , \sqrt[k_n]{r_0} \pr$, $n \in C$, are pairwise disjoint.
\end{rem}

By $\mathbf{1}$, we denote the sequence constantly equal to $1$. In general, $\kai$, $\eli$, $\emi$ are used, respectively, for sequences $(k_n)$, $(l_n)$ and $(m_n)$ in $\en$. Also $\ze$, $\we$, and $\ve$ denote,   respectively,  sequences $(z_n)$, $(w_n)$ and $(v_n)$ in $\de$.

As usual, given a topological space $A$ and a subset $B$ of $A$, $\cw B$ denotes the closure of $B$ in $A$.

\medskip

The paper is organized as follows. In Section~\ref{noporu} we state the main results.  In Section~\ref{complauton}, we give some technical results that are used through the paper. In Section~\ref{mieko}, we show that the Berkovich space of the Tate algebra $T_1$ (without one point) can be homeomorphically embedded as an open subset of $\ma$ (Theorem~\ref{casitadechocolate}). In Section~\ref{gatopo}, we study the existence of bounded analytic functions with a prescribed number of zeros, paying attention to their norms.
In Section~\ref{nomarkator}, we study how the same seminorm can be expressed in different forms, and we prove in particular Theorem~\ref{nadabouche}. Section~\ref{davai} is devoted to proving most of the  results stated in Section~\ref{complauton} (and some others concerning $\mak$). The proof of Theorem~\ref{zerolari}  is provided in Section~\ref{ceropombo}, along with a description of some special seminorms.

\section{Main results}\label{noporu}

 \begin{thm}\label{liberban}
Let $\varphi \in \mil$ have nonzero kernel. Given $f \in \ker \varphi$ with $f \neq 0$    and
$r \in \pl 0, \vc f \vd \pr$,  there exists $\psi \in \mim$ with
nonzero nonmaximal   kernel   such that $\varphi \le \psi$  and $\psi (f) =r$.
\end{thm}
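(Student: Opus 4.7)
The plan is: enlarge the disks in a representation $\varphi=\lim_{\ef}\zeta_{D^{+}(z_{n},r_{n})}$ so that the seminorm value of $f$ grows from $0$ to $r$, recenter at zeros of $f$, refine to land in $\mim$, and verify the kernel conditions. Since $\varphi(f)=0<r$, on some $C_{0}\in\ef$ we have $\zeta_{D^{+}(z_{n},r_{n})}(f)<r$. Expanding $f$ around $z_{n}$ as $f(z)=\sum_{k}b_{k}^{(n)}(z-z_{n})^{k}$, the function $s\mapsto\zeta_{D^{+}(z_{n},s)}(f)=\sup_{k}|b_{k}^{(n)}|s^{k}$ is continuous and non-decreasing on $(0,1)$, with right limit $|f(z_{n})|$ at $0$ and left limit $\vc f\vd$ at $1$ (using the ultrametric identity $D^{-}(z_{n},1)=\de$). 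Since $r<\vc f\vd$, the intermediate value theorem yields $s_{n}\in(r_{n},1)$ with $\zeta_{D^{+}(z_{n},s_{n})}(f)=r$; the sup is attained at some $k(n)\ge 1$ (else $r=|b_{0}^{(n)}|=|f(z_{n})|$, contradicting $|f(z_{n})|<r$), so $s_{n}\ge(r/\vc f\vd)^{1/k(n)}\ge r/\vc f\vd>0$ uniformly. The provisional seminorm $\psi_{0}:=\lim_{\ef}\zeta_{D^{+}(z_{n},s_{n})}$ then lies in $\mil$, dominates $\varphi$ (since $D^{+}(z_{n},s_{n})\supseteq D^{+}(z_{n},r_{n})$), and satisfies $\psi_{0}(f)=r$.

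Next, because $|f(z_{n})|<r=\zeta_{D^{+}(z_{n},s_{n})}(f)$, the function $f$ cannot be zero-free on $D^{+}(z_{n},s_{n})$ (otherwise $f$ would be a unit on the disk, forcing $|f|$ to be constant equal to $r$ there, contradicting $|f(z_{n})|<r$); hence there is a zero $w_{n}\in D^{+}(z_{n},s_{n})$ of $f$. By the ultrametric, $D^{+}(w_{n},s_{n})=D^{+}(z_{n},s_{n})$ and $|w_{n}|=|z_{n}|\to 1$, so $\psi_{0}=\lim_{\ef}\zeta_{D^{+}(w_{n},s_{n})}$ is now expressed with centers at zeros of $f$ and radii bounded below by $r/\vc f\vd$. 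The main obstacle is then to refine once more so that the centers form a sequence regular with respect to a (possibly new) nonprincipal ultrafilter and the corresponding disks are pairwise disjoint on some set of that ultrafilter, while preserving $\psi\ge\varphi$ and $\psi(f)=r$. I expect this to follow from the technical preparation of Sections~\ref{gatopo}--\ref{davai} (strengthening the representation results behind Corollaries~\ref{byemarcal} and~\ref{puigdemont} from ``disjoint disks'' to ``disjoint disks with regular centers''), combined with a subsequence extraction from the zero set of $f$, whose absolute values tend to $1$. Denote the resulting seminorm by $\psi\in\mim$.

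It remains to show $\ker\psi$ is nonzero and nonmaximal. For nonmaximality, compare $\psi$ with the type-II seminorm $\delta_{\we,\ef}:=\lim_{\ef}\delta_{w_{n}}$ (of maximal kernel): clearly $\psi\ge\delta_{\we,\ef}$, and the Blaschke-type product $B\in\hac$ associated to the regular sequence $(w_{n})$ (which exists precisely because the sequence is regular) satisfies $B\in\ker\delta_{\we,\ef}$ yet $\zeta_{D^{+}(w_{n},s_{n})}(B)\ge s_{n}\prod_{m\ne n}|w_{n}-w_{m}|\ge s_{n}\rho$ for the regularity constant $\rho>0$, so $\psi(B)\ge\rho\cdot r/\vc f\vd>0$; this yields $\ker\psi\subsetneq\ker\delta_{\we,\ef}$ strictly, so $\ker\psi$ is not maximal. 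For nonzero kernel, use the existence results of Section~\ref{gatopo} to produce a nonzero $g\in\hac$ vanishing at each $w_{n}$ with sufficient weighted multiplicity (for instance a generalized Blaschke product associated to some $\kai\in\com_{\ef}(\we)$ chosen so that $s_{n}^{k_{n}}\to 0$ along $\ef$), yielding $\zeta_{D^{+}(w_{n},s_{n})}(g)\to 0$ and hence $\psi(g)=0$. The hardest step is the refinement to $\mim$: reconciling the combinatorial regularity of the centers with the geometric domination $\psi\ge\varphi$.
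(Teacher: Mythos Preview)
Your overall strategy---enlarge the disks by the intermediate value theorem so that the value at $f$ becomes $r$, then verify the kernel conditions---is the same as the paper's. However, the step you flag as ``the hardest'' is genuinely missing, and your suggested route to it does not work.

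\medskip

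\textbf{The gap.} You recenter at a zero $w_n\in D^{+}(z_n,s_n)$ and then hope that regularity of $(w_n)$ will follow from ``strengthening Corollaries~\ref{byemarcal} and~\ref{puigdemont}''. Those corollaries only produce pairwise disjoint disks; they say nothing about regularity of the centers, and no later result in the paper upgrades disjointness to regularity in general. Your nonmaximality argument is then circular, since the Blaschke-type product you invoke requires regularity of $(w_n)$, which is exactly what is unproved.

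\medskip

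\textbf{What the paper does instead.} The paper does not recenter. It works with the normalized quantity $\xi_{D^{+}(z_n,s)}(f)$ (which satisfies $\zeta=\|f\|\cdot\xi$ along $\ef$ by Corollary~\ref{conillosis}) and uses the intermediate value theorem on the explicit function
\[
F_n(s)=\prod_{j}\max\{s,|z_n-w_j|\}=\xi_{D^{+}(z_n,s)}(f)
\]
(the $w_j$ being the zeros of $f$ on $C(0,|z_n|)$) to pick $r_n$ with $F_n(r_n)=r/\|f\|$. Two facts fall out of this specific choice. First, since $F_n(0)<r/\|f\|=F_n(r_n)$ and $F_n$ is constant on $[0,\min_j|z_n-w_j|]$, each $D^{+}(z_n,r_n)$ contains at least one zero of $f$. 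Second, the factorization $F_n(r_n)=r_n^{\cer(f,D^{+}(z_n,r_n))}\prod_{|z_n-w_j|>r_n}|z_n-w_j|$ gives
\[
\prod_{|z_n-w_j|>r_n}|z_n-w_j|\ \ge\ \frac{r}{\|f\|}.
\]
Now collapse equal disks to distinct representatives $D^{+}(v_k,t_k)$ and push $\ef$ forward to a new ultrafilter $\af$. For $l\neq k$ with $|v_l|=|v_k|$, the disk $D^{+}(v_l,t_l)$ is disjoint from $D^{+}(v_k,t_k)$ and contains a zero $w_j$ of $f$, so $|v_k-v_l|=|v_k-w_j|$. Hence the regularity product $\prod_{l\neq k}|v_k-v_l|$ is a subproduct of $\prod_{|z_{n_k}-w_j|>r_{n_k}}|z_{n_k}-w_j|\ge r/\|f\|$, and regularity (hence $\psi\in\mim$) follows immediately. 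This is the missing idea: regularity is read off directly from the zero set of $f$ via the exact level $\xi=r/\|f\|$, with no recentering and no appeal to later machinery.

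\medskip

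\textbf{Two smaller points.} Nonmaximality is much cheaper than you make it: from $\varphi\le\psi$ and $f\in\ker\varphi\setminus\ker\psi$ you get $\ker\psi\subsetneq\ker\varphi$, and a proper ideal strictly containing $\ker\psi$ already prevents $\ker\psi$ from being maximal. Your argument for $\ker\psi\neq\{0\}$ via Proposition~\ref{gatillo} is essentially the paper's.
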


We deduce that  $\ker \psi \subsetneq \ker \varphi$, and consequently   there exists an infinite strictly decreasing chain of kernels.
 In particular all kernels of seminorms $\delta_{\ze, \ef}$, with $\ze$ regular with respect to $\ef$, contain nontrivial kernels.

\begin{thm}\label{kubzdelamagdalena}
Let $\ze$ be regular with respect to a nonprincipal ultrafilter $\ef$ in $\en$. Then there exists a linearly ordered compact and connected set $A_{\ze}^{\ef}   \subset \ma$ with $\delta_{\ze, \ef} = \min A_{\ze}^{\ef}$ and $\vc \  \vd = \max A_{\ze}^{\ef}$
such that $\ker \varphi$ is nonzero and nonmaximal for all
$\varphi \in A_{\ze}^{\ef} \setminus \tl \delta_{\ze, \ef} , \vc \  \vd\tr$.
\end{thm}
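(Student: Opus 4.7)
The natural choice is to take $A_{\ze}^{\ef} := \Phi([0,1])$, where $\Phi : [0,1] \ra \ma$ is defined by $\Phi(0) := \delta_{\ze, \ef}$, $\Phi(1) := \vc \cdot \vd$, and $\Phi(r) := \zeta_{\ze, \ef}^{\mathbf{1}, r}$ for $r \in (0,1)$. This is well defined since regularity of $\ze$ with respect to $\ef$ gives $\mathbf{1} \in \com_{\ef} (\ze)$, placing each interior $\Phi(r)$ in $\mak \subset \ma$. The linear ordering is immediate: for $0 \le r \le s \le 1$, the disk inclusion $D^+ (z_n, r) \subset D^+ (z_n, s)$ yields $\zeta_{D^+ (z_n, r)} (f) \le \zeta_{D^+ (z_n, s)} (f)$ on $\hac$, and passing to $\lim_{\ef}$ gives $\Phi(r) \le \Phi(s)$ pointwise.

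For compactness and connectedness, I would show $\Phi$ is continuous in the simple convergence topology. For each $f \in \hac$, writing $f(w) = \sum_k a_k^{(n)} (w - z_n)^k$ and using the ultrametric, one obtains the Newton-polygon identity $\zeta_{D^+ (z_n, r)} (f) = \sup_k |a_k^{(n)}| r^k$, with $|a_k^{(n)}| \le \vc f \vd$ uniformly in $n$. Since $|a_k^{(n)}| r^k \le \vc f \vd r^k \to 0$ as $k \to \infty$ for each $r<1$, truncating at large $K$ introduces error uniformly small on any compact subinterval of $[0,1)$, and commuting the resulting finite sup with $\lim_\ef$ presents $\Phi(r)(f)$ as a uniform limit of the continuous functions $\sup_{k \le K} L_k r^k$ with $L_k := \lim_\ef |a_k^{(n)}|$; continuity at $r=1$ follows from $\sup_k L_k = \vc f \vd$ (as $D^- (z_n , 1) = \de$ for $|z_n| < 1$). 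Hence $A_{\ze}^{\ef} = \Phi([0,1])$ is compact and connected.

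For the kernel conditions on an interior point $\Phi(r)$, $r \in (0,1)$: since $\Phi(r) \ge \delta_{\ze, \ef}$ we have $\ker \Phi(r) \subset \ker \delta_{\ze, \ef}$, the latter a maximal ideal (of Type II). To show strict inclusion, hence nonmaximality of $\ker \Phi(r)$, I would use the Blaschke-type product $g(z) := \prod_{m \in C} (z - z_m) \in \hac$ (bounded by the regularity of $\ze$): $g \in \ker \delta_{\ze, \ef}$ as $g(z_n)=0$ for $n \in C$, while for $r_0 < M := \inf_{n \in C}\prod_{m \in C, m \neq n}|z_n - z_m|$ the ultrametric gives $\zeta_{D^+(z_n, r_0)}(g) = r_0 \prod_{m \in C, m \neq n}|z_n - z_m| \ge r_0 M > 0$, so $\Phi(r_0)(g)>0$, and by monotonicity (or the direct estimate) $\Phi(r)(g)>0$ for every $r \in (0,1)$. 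For non-vanishing of $\ker \Phi(r)$ I would construct a weighted Blaschke-like product $h(z) := \prod_{m \in C_{\kai}}(z - z_m)^{k_m}$ for some $\kai = (k_n) \in \com_{\ef} (\ze)$ with $\lim_\ef k_n = + \infty$; then $\sup_{D^+(z_n, r)}|h| \le r^{k_n}\prod_{m \in C_\kai, m \neq n}|z_n - z_m|^{k_m}$, the latter bounded below by $\com_\ef$-membership, and $r^{k_n} \to 0$ along $\ef$ for any $r < 1$, giving $\Phi(r)(h) = 0$. The main obstacle is producing such a $\kai$: regularity delivers $\mathbf{1} \in \com_\ef(\ze)$ for free, but upgrading to a $\kai$ with $k_n \to \infty$ along $\ef$ while preserving membership in $\com_\ef(\ze)$ requires a careful iterative or diagonal selection of the $k_n$ exploiting the quantitative lower bound on $\prod_{m \in C, m \neq n}|z_n - z_m|$ provided by the regularity hypothesis.
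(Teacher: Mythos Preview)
Your construction has a genuine gap at the top endpoint. You set $\Phi(1):=\vc\cdot\vd$ and then argue that $\Phi$ is continuous at $r=1$ via $\sup_k L_k=\vc f\vd$. That equality is false in general: for each $n$ one has $\sup_k |a_k^{(n)}|=\vc f\vd$, but $\sup_k \lim_\ef |a_k^{(n)}|$ can be strictly smaller, because the index $k$ at which the sup is attained may drift to infinity along $\ef$. Concretely, take any $h\in\hac$ with $\cer(h,D^+(z_n,\delta_n))\ge m_n$ where $m_n\to\infty$ along $\ef$ (such $h$ exist by Proposition~\ref{gatillo}); then $a_k^{(n)}=0$ for $k<m_n$, so $L_k=0$ for every $k$, while $\vc h\vd>0$. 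In fact the paper shows (Corollaries~\ref{pase} and~\ref{lluc}) that $\zeta_{\ze,\ef}^{\mathbf 1,1}:=\lim_{r\to 1^-}\zeta_{\ze,\ef}^{\mathbf 1,r}$ has the \emph{same nonzero} kernel as every $\zeta_{\ze,\ef}^{\mathbf 1,r}$, so $\zeta_{\ze,\ef}^{\mathbf 1,1}\ne\vc\cdot\vd$. Consequently your set $\Phi([0,1))\cup\{\vc\cdot\vd\}$ is neither compact nor connected: $\{\vc\cdot\vd\}$ is an isolated point, since the closure of $\Phi([0,1))$ in $\ma$ picks up $\zeta_{\ze,\ef}^{\mathbf 1,1}$ rather than $\vc\cdot\vd$.

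The paper avoids this by working with a much larger family: it takes \emph{all} seminorms $\lim_\ef\zeta_{D^+(z_n,r_n)}$ with arbitrary radius sequences $(r_n)$, not just the constant ones $r_n=r$. This family is automatically linearly ordered and contains $\vc\cdot\vd$ (take $r_n\uparrow 1$ fast enough, e.g.\ $r_n=|z_n|^2$). Its closure $A_\ze^\ef$ is then shown to remain linearly ordered via an intermediate-value construction (as in the proof of Theorem~\ref{liberban}), and connectedness follows by a sup/inf argument on a putative clopen partition. For the kernel statement one only needs that any $\varphi\in A_\ze^\ef\setminus\{\delta_{\ze,\ef},\vc\cdot\vd\}$ dominates some $\zeta_{\ze,\ef}^{\mathbf 1,r}$ (giving nonmaximality via Corollary~\ref{lluc}) and is not $\vc\cdot\vd$ on $\ka[z]$ (giving nonzero kernel via Proposition~\ref{josejrut}). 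Your single arc $\{\zeta_{\ze,\ef}^{\mathbf 1,r}:r\in[0,1]\}$ is only one branch of a tree inside $A_\ze^\ef$; it stops well short of $\vc\cdot\vd$.
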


Points in $\mil$ can in fact be written in the form $\delta_{\we, \af} := \lim_{\af} \delta_{w_m}$, where $\we$ may be {\em not} regular with respect to $\af$.

\begin{thm}\label{nadabouche}
 $\mil = \tl \delta_{\we, \af} : \lim_{n \ra \infty} \va w_n \vb =1,  \af \in \beta \en \setminus \en \tr$.
\end{thm}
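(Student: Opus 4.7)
The plan is to prove the two inclusions separately; the nontrivial content lies in $\mil \subseteq \{\delta_{\we,\af}\}$.

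For $\supseteq$: given $(w_m) \subset \de$ with $\lim_{m\to\infty}|w_m| = 1$ and $\af \in \beta\en \setminus \en$, take any $(r_m) \subset (0,1)$ with $\lim_{m\to\infty} r_m = 0$ (e.g.\ $r_m = 2^{-m}$). The introduction has already noted that $\lim_\ef \zeta_{D^+(z_n,r_n)} = \delta_{\ze,\ef}$ whenever $\lim_\ef r_n = 0$; applied here, this gives $\delta_{\we,\af} = \lim_\af \zeta_{D^+(w_m,r_m)} \in \mil$.

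For $\subseteq$, let $\varphi = \lim_\ef \zeta_{D^+(z_n,r_n)} \in \mil$. If $\lim_\ef r_n = 0$ then $\varphi = \delta_{\ze,\ef}$ and we are done, so I would focus on the case $\lim_\ef r_n > 0$. By Corollaries~\ref{byemarcal} and~\ref{puigdemont} one may further assume $r_n < |z_n|$ and that the disks $D^+(z_n,r_n)$ are pairwise disjoint for every $n$ in some $C \in \ef$. The idea is then a diagonal construction: enumerate a countable dense subset $\{f_j\}_{j \in \en}$ of $\hac$ (this is where separability of $\ka$ enters), and for each $n \in C$ choose $w_n \in D^+(z_n, r_n)$ with
$$|f_j(w_n)| = \zeta_{D^+(z_n, r_n)}(f_j) \qquad \text{for all } j = 1, \ldots, n.$$
Such $w_n$ exists by the standard ultrametric observation (Weierstrass preparation plus maximum modulus) that each ``exceptional'' set $\{w \in D^+(z_n, r_n) : |f_j(w)| < \zeta_{D^+(z_n, r_n)}(f_j)\}$ is a finite union of proper open sub-disks of $D^+(z_n, r_n)$, so that any finite union of such sets is a proper subset. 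For $n \notin C$ put $w_n := z_n$.

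Then $r_n < |z_n|$ forces $|w_n| = |z_n|$ for $n \in C$, while $|w_n| = |z_n|$ trivially for $n \notin C$, so $\lim_{n\to\infty}|w_n| = 1$. For each fixed $j$ and all $n \in C$ with $n \ge j$ the identity $|f_j(w_n)| = \zeta_{D^+(z_n, r_n)}(f_j)$ gives $\lim_\ef |f_j(w_n)| = \lim_\ef \zeta_{D^+(z_n, r_n)}(f_j) = \varphi(f_j)$. Thus the continuous seminorms $\lim_\ef \delta_{w_n}$ and $\varphi$ agree on the dense set $\{f_j\}$, and since both are $\|\cdot\|$-Lipschitz they must coincide on $\hac$; hence $\varphi = \delta_{\we,\ef}$, finishing the proof with $\af := \ef$. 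The principal obstacle is the separability step: for non-separable $\ka$ the countable dense set $\{f_j\}$ is unavailable, and the diagonal argument must be replaced by something finer, for instance a Rudin--Keisler iterated-ultrafilter construction on $\en \times \en$ that realizes each $\zeta_{D^+(z_n,r_n)}$ as an ultrafilter limit of point-evaluations and then assembles these local ultrafilters over $\ef$.
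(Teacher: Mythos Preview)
Your diagonal argument has a genuine gap that is not repaired by assuming $\ka$ separable: the algebra $\hac$ is \emph{never} separable. Indeed, as noted in the introduction, $\hac$ is isometrically isomorphic (as a Banach space) to $\ell^{\infty}$, and $\ell^{\infty}$ over any nontrivially valued field contains an uncountable family of elements at pairwise distance $1$ (e.g.\ the characteristic sequences of subsets of $\en$). So the countable dense family $\{f_j\}$ you invoke does not exist, even for $\ka = \ce_p$, and the diagonal choice of $w_n$ cannot be carried out. Your observation that each exceptional set $\{w \in D^+(z_n,r_n): |f(w)| < \zeta_{D^+(z_n,r_n)}(f)\}$ is a finite union of proper open sub-disks is correct and useful, but it only lets you handle finitely many $f$'s at a time; without separability there is no way to exhaust $\hac$ along the index $n$.

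Your closing remark about an iterated-ultrafilter construction is in fact the right idea and is essentially what the paper does. Rather than choosing one point $w_n$ per disk, the paper replaces each $D^+(z_n,r_n)$ by a large finite set $A_n = \{w_1^n,\dots,w_{N_n}^n\} \subset C(z_n,r_n)$ of mutually $r_n$-separated points, with $N_n \to \infty$ fast enough that $r_n^{N_n} \to 0$. Enumerating $\bigcup_n A_n$ as a single sequence $\we$, one builds a filter on this index set by declaring ``small'' those $D$ for which $\lim_\ef \ca(\{w_m : m \in D\} \cap A_n)/N_n = 0$, and takes any ultrafilter $\af$ refining it. Two things follow: first, $\we$ is not regular with respect to $\af$ (each $A_n$ contributes a product $\le r_n^{N_n - 1} \to 0$); second, with $s_m := r_n$ for $w_m \in A_n$ one has $\lim_\af \zeta_{D^+(w_m,s_m)} = \lim_\ef \zeta_{D^+(z_n,r_n)} = \varphi$. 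Proposition~\ref{gertrudisrivota} (which you did not use) then gives $\varphi = \lim_\af \delta_{w_m}$ directly, since for a sequence that is not regular with respect to $\af$ and has pairwise disjoint disks, the disk seminorm and the point seminorm along $\af$ coincide. This bypasses separability entirely.
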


\begin{thm}\label{thesidubar}
Let
$\ze$ be a regular sequence with respect to $\ef \in \beta \en \setminus \en$. Then, for each $\kai \in  \com_{\ef}  (\ze) $, the maps $\zeta_{\ze, \ef}^{\kai, 0} := \lim_{r \ra 0}   \zeta_{\ze, \ef}^{\kai, r}$ and $ \zeta_{\ze, \ef}^{\kai, 1}:= \lim_{r \ra 1} \zeta_{\ze, \ef}^{\kai, r}$ exist and
belong to $\ma$, and
$$\cl \pl \zeta_{\ze, \ef}^{\kai, 0} , \zeta_{\ze, \ef}^{\kai, 1} \pr =     \pl \zeta_{\ze, \ef}^{\kai, 0} , \zeta_{\ze, \ef}^{\kai, 1} \pr \ \cup \tl \zeta_{\ze, \ef}^{\kai, 0}  ,  \zeta_{\ze, \ef}^{\kai, 1} \tr .$$
Moreover  $\cl \pl \zeta_{\ze, \ef}^{\kai, 0} , \zeta_{\ze, \ef}^{\kai, 1} \pr $ is homeomorphic to the interval $[0,1]$, through a homeomorphism sending $\pl \zeta_{\ze, \ef}^{\kai, 0} , \zeta_{\ze, \ef}^{\kai, 1} \pr$ onto $(0,1)$.
\end{thm}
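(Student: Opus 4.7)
The strategy is to build a continuous injection $\Phi \colon [0,1] \to \ma$ with $\Phi(r) = \zeta_{\ze, \ef}^{\kai, r}$ for $r \in (0,1)$, and then promote $\Phi$ to a homeomorphism onto its image using that $[0,1]$ is compact and $\ma$ is Hausdorff; that image will coincide with the asserted closure.

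\emph{Existence of the endpoint seminorms.} For $f \in \hac$, expand $f$ around $z_n$ as $f(z) = \sum_{j \ge 0} c_j(n)(z-z_n)^j$. Then
\[
g_n(r) := \zeta_{D^+(z_n, \sqrt[k_n]{r})}(f) = \max_{j \ge 0} \va c_j(n) \vb \, r^{j/k_n}
\]
is a continuous and non-decreasing function of $r \in (0,1)$, so $g(r) := \zeta_{\ze, \ef}^{\kai, r}(f) = \lim_{\ef} g_n(r)$ is non-decreasing in $r$. The monotone limits $g(0^+)$ and $g(1^-)$ therefore exist and define $\zeta_{\ze, \ef}^{\kai, 0}(f)$ and $\zeta_{\ze, \ef}^{\kai, 1}(f)$. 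Multiplicativity, the ultrametric triangle inequality, normalisation and the $\vp \le \vc \cdot \vd$ bound all pass to monotone pointwise limits, so both endpoint seminorms lie in $\ma$.

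\emph{Continuity of $\Phi$.} For each fixed $f$, $\log g_n(r) = \max_{j \ge 0} (\log \va c_j(n) \vb + (j/k_n) \log r)$ is a continuous convex function of $\log r$ as the maximum of affine functions. Convexity is preserved under pointwise (hence ultrafilter) limits, so $\log g(r)$ is a convex function of $\log r$ on $(-\infty, 0)$, and convex functions are continuous on the interior of their effective domain. Combined with monotonicity, the set $\tl r : g(r) = 0 \tr$ is a downward-closed interval $[0, r_*)$ or $[0, r_*]$, and $g$ is continuous on $(r_*, 1)$. The only remaining failure of continuity would be a jump at the transition $r_* \in (0,1)$, with $g(r_*) = 0$ and $\lim_{r \to r_*^+} g(r) > 0$. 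Ruling out such a jump is the principal technical step: one would factor $f$ near each $z_n$ as $(z-z_n)^{m_n} h_n$ with $h_n(z_n) \neq 0$ and use the hypotheses that $\ze$ is regular with respect to $\ef$ and $\kai \in \com_\ef(\ze)$ to obtain, on a set in $\ef$, a uniform upper bound on the ratios $m_n/k_n$ and hence on the exponents $j/k_n$ realising the maximum defining $g_n(r_*)$. This uniform bound converts continuity of each $g_n$ at $r_*$ into continuity of $g$ at $r_*$. Continuity at the endpoints $r \in \tl 0, 1 \tr$ is immediate from the defining one-sided limits.

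\emph{Injectivity, and identification of the closure.} Using $\kai \in \com_\ef(\ze)$, fix $C_\kai \in \ef$ and $M > 0$ with $\prod_{m \in C_\kai,\, m \neq n} \va z_n - z_m \vb^{k_m} \ge M$ on $C_\kai$. By the constructions of Section~\ref{gatopo} there exists $f_\kai \in \hac$ vanishing with multiplicity exactly $k_n$ at each $z_n$ ($n \in C_\kai$), with $\va c_{k_n}(n) \vb \ge \lambda$ for some $\lambda > 0$. Since $c_j(n) = 0$ for $j < k_n$, taking $j = k_n$ in the max gives $g_n(r) \ge \va c_{k_n}(n) \vb\, r \ge \lambda r$; more precisely, if $j_0 \ge k_n$ realises the max at $r$ then for $r' \ge r$,
\[
g_n(r') \ge \va c_{j_0}(n) \vb\, r'^{j_0/k_n} = g_n(r)(r'/r)^{j_0/k_n} \ge g_n(r)(r'/r),
\]
which passes to the $\ef$-limit and shows that $g$ is strictly increasing on $(0,1)$. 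This separates all interior values of $r$; the endpoints are separated from the interior by $\zeta_{\ze, \ef}^{\kai, 0}(f_\kai) = \delta_{\ze, \ef}(f_\kai) = 0 < \lambda r \le g(r)$ for $r > 0$, and $\zeta_{\ze, \ef}^{\kai, 1}(f_\kai) = \lim_{r \to 1^-} g(r) > g(r_0)$ for every $r_0 < 1$. A continuous injection from compact $[0,1]$ into Hausdorff $\ma$ is a homeomorphism onto its compact image $K$; since $K$ is closed and contains $\pl \zeta_{\ze, \ef}^{\kai, 0}, \zeta_{\ze, \ef}^{\kai, 1} \pr$ together with its two endpoints, $\cl \pl \zeta_{\ze, \ef}^{\kai, 0}, \zeta_{\ze, \ef}^{\kai, 1} \pr \subseteq K$, with the reverse inclusion holding because the endpoints are limits of the open interval by construction. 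Hence $\cl = K$, and $\Phi^{-1}$ is the asserted homeomorphism, sending the open interval onto $(0,1)$. The crux of the whole argument is the jump-ruling-out step inside the continuity proof.
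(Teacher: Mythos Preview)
Your overall architecture matches the paper's: define $\Phi(r)=\zeta_{\ze,\ef}^{\kai,r}$ on $(0,1)$, prove injectivity and continuity, extend to $[0,1]$ by monotone limits, and conclude via compact--Hausdorff. The paper obtains continuity by a direct zero-counting estimate (Lemma~\ref{pijamalari}) which yields $\va \zeta_r(f)-\zeta_s(f)\vb\le r^{\alpha(r)}-s^{\alpha(r)}$ with $\alpha(r)=\lim_{\ef}\cer\!\pl f,D^+(z_n,\sqrt[k_n]{r})\pr/k_n$, and then a calculus lemma (Lemma~\ref{siatillo}) turns this into a local Lipschitz bound; injectivity comes from Proposition~\ref{diegosoper}.

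Your continuity argument via convexity of $u\mapsto\log g(e^u)$ is a genuinely different and rather elegant route, but you have misdiagnosed where the difficulty lies. A convex function $\phi:(a,b)\to[-\infty,\infty)$ that takes the value $-\infty$ at any interior point must be identically $-\infty$: if $\phi(x_0)=-\infty$ and $y\in(a,b)$, pick $z\in(a,b)$ with $y$ strictly between $x_0$ and $z$ and apply the convexity inequality. Hence either $g\equiv 0$ on $(0,1)$ (and $\Phi(r)(f)=0$ for all $r$) or $g>0$ everywhere on $(0,1)$, in which case $\log g$ is a finite convex function on an open interval and therefore continuous. There is no interior transition point $r_*$ at all, and your ``jump-ruling-out step'' is unnecessary. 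This is fortunate, because your sketch of that step is wrong: the regularity of $\ze$ and the condition $\kai\in\com_\ef(\ze)$ constrain the sequence $\ze$ and the weights $k_m$, not the order of vanishing of an \emph{arbitrary} $f\in\hac$ at the $z_n$, so there is no uniform bound on $m_n/k_n$ available.

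Your injectivity argument has a real gap. You invoke ``the constructions of Section~\ref{gatopo}'' to produce $f_\kai$ vanishing to order exactly $k_n$ at each $z_n$, and then use $c_j(n)=0$ for $j<k_n$ to force the maximising index $j_0\ge k_n$. But Proposition~\ref{gatillo} does not give this: it yields a function with $k_n$ \emph{simple} zeros in small disks $D^+(z_n,\delta_n)$, not a zero of order $k_n$ at $z_n$ itself (and over a non-spherically-complete $\ka$ the exact-prescription problem can fail). With zeros only near $z_n$ one has $c_0(n)=f(z_n)\neq 0$, and for small $r$ the maximum is attained at $j=0$, so your inequality $g_n(r')\ge g_n(r)\,r'/r$ breaks down. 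The paper's Proposition~\ref{diegosoper} handles exactly this: it places $k_n$ simple zeros in $D^+(z_n,\delta_n)$, computes via the $\xi$-factors that $MT\le\zeta_{\ze,\ef}^{\kai,r}(f)\le r$, and shows strict monotonicity in $r$ by comparing $\xi_{D^+(z_n,r_n)}$ and $\xi_{D^+(z_n,s_n)}$. You should either cite that proposition or reproduce its computation; the side remark $\zeta_{\ze,\ef}^{\kai,0}=\delta_{\ze,\ef}$ is also incorrect in general (it holds only when $\lim_\ef k_n<\infty$).
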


In fact, seminorms $\zeta_{\ze, \ef}^{\kai, 0}$ and $\zeta_{\ze, \ef}^{\kai, 1}$  belong to $\mil$ (see Proposition~\ref{tobien}).

The following result  says that many seminorms share the same nonzero nonmaximal kernels (see also Corollary~\ref{lz}).

\begin{cor}\label{pase}
Let
$\ze$ be a regular sequence with respect to $\ef \in \beta \en \setminus \en$. Then, for each $\kai \in  \com_{\ef}  (\ze) $, all seminorms in $\pl \zeta_{\ze, \ef}^{\kai, 0} , \zeta_{\ze, \ef}^{\kai, 1} \qr := \pl \zeta_{\ze, \ef}^{\kai, 0} , \zeta_{\ze, \ef}^{\kai, 1} \pr \cup \tl \zeta_{\ze, \ef}^{\kai, 1} \tr$ have the same kernel.
\end{cor}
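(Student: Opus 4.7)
The first step is to exploit the monotonicity of the family: for each fixed $n$, the disk $D^+ \pl z_n, \sqrt[k_n]{r} \pr$ expands with $r$, so the Gauss norm $\zeta_{D^+ \pl z_n, \sqrt[k_n]{r} \pr}$ is non-decreasing in $r$. Passing to the $\ef$-limit preserves this, so $\zeta_{\ze, \ef}^{\kai, r_0} \le \zeta_{\ze, \ef}^{\kai, r_1}$ whenever $r_0 \le r_1$ in $(0, 1]$, giving $\ker \zeta_{\ze, \ef}^{\kai, r_1} \subseteq \ker \zeta_{\ze, \ef}^{\kai, r_0}$. Moreover, by Theorem~\ref{thesidubar}, $\zeta_{\ze, \ef}^{\kai, 1} = \lim_{r \to 1^-} \zeta_{\ze, \ef}^{\kai, r}$ is the pointwise supremum over $r \in (0, 1)$, so $\ker \zeta_{\ze, \ef}^{\kai, 1} = \bigcap_{r \in (0, 1)} \ker \zeta_{\ze, \ef}^{\kai, r}$. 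The corollary thus reduces to the converse inclusion $\ker \zeta_{\ze, \ef}^{\kai, r_0} \subseteq \ker \zeta_{\ze, \ef}^{\kai, r_1}$ for arbitrary $r_0 < r_1$ in $(0, 1)$.

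Fix $f \in \ker \zeta_{\ze, \ef}^{\kai, r_0}$. Using the factorization $\zeta_{D^+ \pl z_n, s \pr}(f) = C_n \, \xi_{D^+ \pl z_n, s \pr}(f)$, where $C_n$ is the contribution from off-circle zeros of $f$ (a quantity depending only on $\va z_n \vb$ and bounded by $\vc f \vd$) and $\xi_{D^+ \pl z_n, s \pr}(f) = \prod_{\va w_i \vb = \va z_n \vb} \max \pl \va z_n - w_i \vb , s \pr$ accounts for the on-circle zeros $(w_i)$ of $f$, set $N_n := \# \tl i : \va w_i \vb = \va z_n \vb , \; \va z_n - w_i \vb \le \sqrt[k_n]{r_1} \tr$. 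A direct term-by-term estimation gives
$$\zeta_{D^+ \pl z_n, \sqrt[k_n]{r_1} \pr}(f) \le \vc f \vd \cdot r_1^{N_n / k_n} ,$$
and, comparing the product at $s = \sqrt[k_n]{r_0}$ with that at $s = \sqrt[k_n]{r_1}$ factor by factor,
$$\zeta_{D^+ \pl z_n, \sqrt[k_n]{r_1} \pr}(f) \le \pl r_1 / r_0 \pr^{N_n / k_n} \cdot \zeta_{D^+ \pl z_n, \sqrt[k_n]{r_0} \pr}(f) .$$

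The conclusion now follows by splitting on $B := \lim_{\ef} N_n / k_n \in [0, +\infty]$. If $B = +\infty$, then for each $M > 0$ the set $\tl n : N_n / k_n > M \tr$ lies in $\ef$, and the first bound above yields $\lim_{\ef} \zeta_{D^+ \pl z_n, \sqrt[k_n]{r_1} \pr}(f) \le \vc f \vd \cdot r_1^M$; letting $M \to \infty$ forces $\zeta_{\ze, \ef}^{\kai, r_1}(f) = 0$. If $B < +\infty$, the set $\tl n : N_n / k_n < B + 1 \tr$ lies in $\ef$, and the second bound yields $\zeta_{\ze, \ef}^{\kai, r_1}(f) \le \pl r_1 / r_0 \pr^{B+1} \cdot \zeta_{\ze, \ef}^{\kai, r_0}(f) = 0$. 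In either case $f \in \ker \zeta_{\ze, \ef}^{\kai, r_1}$, completing the proof. The main obstacle is the justification of the factorization $\zeta_{D^+ \pl z_n, s \pr}(f) = C_n \, \xi_{D^+ \pl z_n, s \pr}(f)$ together with the uniform bound $C_n \le \vc f \vd$ for $f \in \hac$ possibly having infinitely many zeros, a nonarchimedean Weierstrass-type result presumably developed in Section~\ref{complauton}.
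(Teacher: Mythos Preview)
Your proof is correct and follows essentially the same approach as the paper's. The paper's proof is terser: it simply says the equality of kernels for $r,s \in (0,1)$ ``follows easily from Lemma~\ref{correicion} and Corollary~\ref{conillosis},'' and then handles $r=1$ exactly as you do via $\zeta_{\ze,\ef}^{\kai,1} = \lim_{r\to 1}\zeta_{\ze,\ef}^{\kai,r}$. Your two displayed bounds are precisely what those two results supply: your factor-by-factor comparison $\zeta_{D^+(z_n,\sqrt[k_n]{r_1})}(f) \le (r_1/r_0)^{N_n/k_n}\,\zeta_{D^+(z_n,\sqrt[k_n]{r_0})}(f)$ is Lemma~\ref{correicion} (translated from $\xi$ to $\zeta$ via Corollary~\ref{conillosis}), and your bound $\zeta_{D^+(z_n,\sqrt[k_n]{r_1})}(f) \le \vc f\vd\, r_1^{N_n/k_n}$ is the definition of $\xi$ together with the factorization and the inequality $C_n \le \vc f\vd$, which indeed come out of Lemma~\ref{sinaltaboces} and the proof of Corollary~\ref{conillosis} in Section~\ref{complauton}. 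Your case split on $\lim_{\ef} N_n/k_n$ is the natural way to combine them, and is presumably what the paper has in mind.
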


In view of Corollary~\ref{pase}, we can consider kernels of seminorms in $\muz$ given by different sequences $\kai$ and $\eli$. It is very easy to deduce that they coincide when $\lim_{\ef} l_n /k_n \in (0, + \infty)$. In any other case, we have the following corollary.

\begin{cor}\label{zerote}
Let
$\ze$ be a regular sequence with respect to $\ef \in \beta \en \setminus \en$. Let $\kai , \eli \in  \com_{\ef}  (\ze) $. If $\lim_{\ef} l_n / k_n = 0$, then $\ker \zeta_{\ze, \ef}^{\kai, 1} \subsetneq \ker \zeta_{\ze, \ef}^{\eli, 1} $.
\end{cor}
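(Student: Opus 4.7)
The plan is to use Corollary~\ref{pase} to pass from $r=1$ to a single convenient $r\in(0,1)$, establish the containment by a disk-inclusion argument, and witness strict containment via the canonical bounded analytic function associated to $\kai$ in Section~\ref{gatopo}, combined with a short ultrametric Schwarz-type estimate.

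First I would fix $r\in(0,1)$; by Corollary~\ref{pase}, $\ker\zeta_{\ze,\ef}^{\kai,1}=\ker\zeta_{\ze,\ef}^{\kai,r}$ and $\ker\zeta_{\ze,\ef}^{\eli,1}=\ker\zeta_{\ze,\ef}^{\eli,r}$, so it suffices to prove $\ker\zeta_{\ze,\ef}^{\kai,r}\subsetneq\ker\zeta_{\ze,\ef}^{\eli,r}$. The inclusion $\subseteq$ is immediate: $\lim_{\ef}l_n/k_n=0$ forces $\{n:l_n<k_n\}\in\ef$, and on this set $r<1$ yields $r^{1/l_n}<r^{1/k_n}$, so $D^+(z_n,r^{1/l_n})\subseteq D^+(z_n,r^{1/k_n})$ and therefore $\zeta_{\ze,\ef}^{\eli,r}(f)\le\zeta_{\ze,\ef}^{\kai,r}(f)$ for every $f\in\hac$.

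For strict containment I would take $F_{\kai}\in\hac$ with $\|F_{\kai}\|\le 1$, a bounded analytic function having a zero of order exactly $k_n$ at each $z_n$ for $n$ in some $C_*\in\ef$, as constructed in Section~\ref{gatopo} from the hypothesis $\kai\in\com_{\ef}(\ze)$. Writing $F_{\kai}(z)=(z-z_n)^{k_n}h_n(z)$ with $h_n$ analytic and bounded on $\de$, the nonarchimedean maximum principle applied to $h_n$ on concentric disks centred at $z_n$ would give the Schwarz-type inequality
\[
\sup_{D^+(z_n,s_1)}|F_{\kai}|\;\le\;\left(\frac{s_1}{s_2}\right)^{k_n}\sup_{D^+(z_n,s_2)}|F_{\kai}|\qquad(0<s_1\le s_2).
\]
Taking $s_1=r^{1/l_n}$ and $s_2=r^{1/k_n}$ (with $s_1\le s_2$ along $\ef$) and using $\|F_{\kai}\|\le 1$, I would obtain $\sup_{D^+(z_n,r^{1/l_n})}|F_{\kai}|\le r^{k_n/l_n-1}\to 0$ along $\ef$, since $k_n/l_n\to\infty$ and $r<1$; hence $\zeta_{\ze,\ef}^{\eli,r}(F_{\kai})=0$.

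To close the argument I would verify $F_{\kai}\notin\ker\zeta_{\ze,\ef}^{\kai,r}$. As $l_n\ge 1$ and $k_n/l_n\to\infty$ along $\ef$, also $k_n\to\infty$ along $\ef$; regularity of $\ze$ furnishes $M>0$ with $|z_n-z_m|\ge M$ for $n\ne m$ in $C_*$, so $\{n\in C_*:r^{1/k_n}<M\}\in\ef$. On this set only $z_n$ from $\{z_m:m\in C_*\}$ lies in $D^+(z_n,r^{1/k_n})$, and the sup reduces, up to the normalization of $F_{\kai}$, to $r\cdot c_n$ with $c_n=\prod_{m\ne n}|z_n-z_m|^{k_m}\ge c_0>0$ being the condition $\kai\in\com_{\ef}(\ze)$; passing to the $\ef$-limit yields $\zeta_{\ze,\ef}^{\kai,r}(F_{\kai})\ge rc_0>0$. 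Thus $F_{\kai}\in\ker\zeta_{\ze,\ef}^{\eli,r}\setminus\ker\zeta_{\ze,\ef}^{\kai,r}$, giving the strict inclusion. The main obstacle I anticipate is securing the function $F_{\kai}$ with the required zero pattern and norm control, together with the explicit local form of $\sup_{D^+(z_n,s)}|F_{\kai}|$; once these are furnished by Section~\ref{gatopo} under $\kai\in\com_{\ef}(\ze)$, the rest of the proof reduces to the pair of short bounds above.
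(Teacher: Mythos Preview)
Your overall strategy is sound, but there is a genuine gap in the construction of $F_{\kai}$. Section~\ref{gatopo} does \emph{not} produce a function with a zero of order exactly $k_n$ at the point $z_n$; in fact the paper explicitly notes (at the start of that section, citing Lazard) that over a non--spherically-complete field such a function need not exist. What Proposition~\ref{gatillo} actually yields is a function with $k_n$ \emph{simple} zeros inside a small disk $D^+(z_n,\delta_n)$, and no other zeros on the relevant circles. Consequently your factorisation $F_{\kai}(z)=(z-z_n)^{k_n}h_n(z)$ and the ensuing Schwarz inequality are not justified as stated.

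The fix is immediate once you use the function the paper does construct. Take $f$ as in the proof of Proposition~\ref{diegosoper}: $\|f\|=1$, with exactly $k_n$ simple zeros in $D^+(z_n,\delta_n)$ (for $n$ in some $C\in\ef$) and no other zeros on those circles, where $\delta_n\to 0$. For $n$ large enough that $\delta_n<\sqrt[l_n]{r}$, the disk $D^+(z_n,\sqrt[l_n]{r})$ contains all $k_n$ of these zeros, so $\xi_{D^+(z_n,\sqrt[l_n]{r})}(f)\le r^{k_n/l_n}$; since $k_n/l_n\to\infty$ along $\ef$, Corollary~\ref{conillosis} gives $\zeta_{\ze,\ef}^{\eli,r}(f)=0$. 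That $\zeta_{\ze,\ef}^{\kai,r}(f)>0$ is exactly the content of Proposition~\ref{diegosoper}, which already carries out your final-paragraph estimate in the correct setting. With this replacement your argument is complete.

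For comparison, the paper's own proof is shorter and avoids handling a witness function directly: from $\lim_{\ef}l_n/k_n=0$ one gets $\zeta_{\ze,\ef}^{\eli,r}\le\zeta_{\ze,\ef}^{\kai,0}$ for every $r\in(0,1)$, hence $\ker\zeta_{\ze,\ef}^{\kai,0}\subset\ker\zeta_{\ze,\ef}^{\eli,1}$, and the strict inclusion $\ker\zeta_{\ze,\ef}^{\kai,1}\subsetneq\ker\zeta_{\ze,\ef}^{\kai,0}$ is already recorded in Corollary~\ref{lz}. Your route is more explicit about the witness, while the paper's route packages that work into the intermediate seminorm $\zeta_{\ze,\ef}^{\kai,0}$.
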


\begin{cor}\label{lluc}
The kernel of every point in $\mak$ is nonzero and nonmaximal.
\end{cor}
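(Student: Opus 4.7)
The plan is to reduce to the regular-sequence case and then apply Corollaries~\ref{pase} and~\ref{zerote}. A point $\varphi\in\mak$ has, by definition, the form $\varphi=\zeta_{\ze,\ef}^{\kai,r}$ with $\kai\in\com_\ef(\ze)$ and $r\in(0,1)$. Using $\mak\subset\mim$ (Remark~\ref{nonada}), I first rewrite $\varphi=\lim_{\af}\zeta_{D^+(w_n,s_n)}$ with $\we=(w_n)$ regular with respect to some nonprincipal ultrafilter $\af$ and the disks $D^+(w_n,s_n)$ pairwise disjoint for $n$ in some $\af$-large set. Selecting $\eli=(l_n)\in\en^{\en}$ and $s\in(0,1)$ with $s_n^{l_n}\to s$ along $\af$ is possible because $s_n$ does not tend to $0$ along $\af$: in the opposite case $\varphi=\delta_{\we,\af}$ would be type~II, hence of maximal kernel, contradicting $\varphi\in\mak$. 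This identification gives $\varphi=\zeta_{\we,\af}^{\eli,s}$ with $\eli\in\com_{\af}(\we)$, placing us in the framework of the two corollaries.

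In this regular setting Corollary~\ref{pase} identifies $\ker\varphi$ with $\ker\zeta_{\we,\af}^{\eli,1}$. For non-maximality, I would choose $\eli^{-}\in\com_\af(\we)$ with $\lim_\af l_n^{-}/l_n=0$, concretely $\eli^{-}=\mathbf{1}$ (which lies in $\com_\af(\we)$ since $\we$ is regular), assuming $l_n\to\infty$ along $\af$. Corollary~\ref{zerote} then yields $\ker\varphi\subsetneq\ker\zeta_{\we,\af}^{\eli^{-},1}$, exhibiting $\ker\varphi$ as a proper prime ideal properly contained in another proper ideal, hence not maximal. For non-zeroness, I would instead choose $\eli^{+}\in\com_\af(\we)$ with $\lim_\af l_n/l_n^{+}=0$; Corollary~\ref{zerote} gives $\ker\zeta_{\we,\af}^{\eli^{+},1}\subsetneq\ker\varphi$, and any $f$ in the strict set-difference is nonzero (since $0$ is in both kernels), witnessing $\ker\varphi\neq\{0\}$. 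The degenerate case in which $l_n$ stays bounded along $\af$ is reduced to the identity $\zeta_{\we,\af}^{\eli,s}=\zeta_{\we,\af}^{\mathbf{1},s^{1/l}}$ for the constant value $l$ of $l_n$, from which a direct appeal to Theorem~\ref{kubzdelamagdalena} locates $\varphi$ strictly between the endpoints $\delta_{\we,\af}$ and $\vc\,\vd$ of the chain $A_{\we}^{\af}$, so $\ker\varphi$ is both nonzero and non-maximal.

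The main obstacle I anticipate is ensuring that the sequence $\eli^{+}\in\com_\af(\we)$ with $l_n^{+}\to\infty$ and $\lim_\af l_n/l_n^{+}=0$ actually exists: merely doubling $\eli$ preserves membership in $\com_\af(\we)$ but fails the required ratio condition, so one has instead to pick slowly-growing multipliers $c_m\to\infty$, set $l_m^{+}=c_m l_m$, and verify that $\prod_{m\neq n}|w_n-w_m|^{l_m^{+}}$ remains bounded below uniformly in $n$. The summability $\sum_{m}l_m\log|w_n-w_m|^{-1}\le\log M^{-1}$ inherited from $\eli\in\com_\af(\we)$ should make such a diagonal construction possible. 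The preliminary reduction from $(\ze,\ef,\kai)$ to the regular triple $(\we,\af,\eli)$ via Remark~\ref{nonada} is essentially routine once the internal description of $\mim$ is unpacked, so the cost of the proof is concentrated in the existence of $\eli^{+}$ (and in checking the degenerate bounded case by invoking Theorem~\ref{kubzdelamagdalena}).
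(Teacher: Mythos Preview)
Your proposal has two circularities that undermine it as written.

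First, when you argue that $s_n\not\to 0$ along $\af$ because ``in the opposite case $\varphi=\delta_{\we,\af}$ would be type~II, hence of maximal kernel, contradicting $\varphi\in\mak$,'' you are invoking exactly the statement being proved: that every point of $\mak$ has non-maximal kernel. Nothing in the \emph{definition} of $\mak$ rules out maximal kernel a~priori; that is the content of Corollary~\ref{lluc}.

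Second, your handling of the bounded case appeals to Theorem~\ref{kubzdelamagdalena}, but in the paper that theorem is proved \emph{after} Corollary~\ref{lluc} and explicitly relies on it (the final paragraph of its proof reads ``by Corollary~\ref{lluc}, $\ker\varphi$ is not maximal''). So this appeal is also circular.

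Both detours are in fact unnecessary. A point of $\mak$ is by definition already of the form $\zeta_{\ze,\ef}^{\kai,r}$ with $\kai\in\com_{\ef}(\ze)$, and since $k_m\ge 1$ and $\va z_n-z_m\vb<1$, the inequality $\va z_n-z_m\vb^{k_m}\le\va z_n-z_m\vb$ shows that $\kai\in\com_{\ef}(\ze)$ forces $\ze$ to be regular with respect to $\ef$. There is no need to pass to a new triple $(\we,\af,\eli)$ via Remark~\ref{nonada}, and your assertion that the resulting $\eli$ lies in $\com_{\af}(\we)$ would itself require justification. Working directly with $(\ze,\ef,\kai)$, the paper obtains non-maximality in one line from Corollary~\ref{lz}, which gives $\ker\varphi\subsetneq\ker\zeta_{\ze,\ef}^{\kai,0}$ with no case split on $\lim_{\ef}k_n$. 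For non-zeroness it bypasses the search for $\eli^{+}\in\com_{\ef}(\ze)$ entirely: after using Remark~\ref{honble} to make the disks $D^+\pl z_n,\sqrt[k_n]{r_0}\pr$ pairwise disjoint, one picks any $(l_n)$ with $l_n\to\infty$ and $\prod_n\va z_n\vb^{l_nk_n}>0$ and applies Proposition~\ref{gatillo} to build $f\in\hac$ with $l_nk_n$ zeros in each such disk; then $\zeta_{\ze,\ef}^{\kai,r_0}(f)=0$, and Corollary~\ref{pase} transports this to $\ker\varphi$. The key point is that Proposition~\ref{gatillo} only needs the absolute-value condition $\prod_n\va z_n\vb^{l_nk_n}>0$, not the stronger pairwise-distance condition defining $\com_{\ef}(\ze)$, which is exactly the obstacle you flagged for $\eli^{+}$.
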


We easily deduce that   $\ker \zeta_{\ze, \ef}^{\kai, 1}$ is always nonzero and nonmaximal, and that $\ker \zeta_{\ze, \ef}^{\kai, 0}$ is   nonzero.
Moreover, if $\lim_{\ef} k_n < + \infty$, then $\zeta_{\ze, \ef}^{\kai, 0} = \delta_{\ze, \ef}$, so its kernel is maximal. Now, we see that
the converse also holds.

\begin{cor}\label{lucia}
Let
$\ze$ be a regular sequence with respect to $\ef \in \beta \en \setminus \en$. Then, for each $\kai \in  \com_{\ef}  (\ze) $,
 $\ker \zeta_{\ze, \ef}^{\kai, 0}$ is
nonmaximal   if and only if
$\lim_{\ef} k_n = + \infty$.
\end{cor}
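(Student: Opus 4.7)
The plan is to prove the two implications separately. The only-if direction is handed to us by the discussion preceding the corollary: by contraposition, assume $\lim_\ef k_n<+\infty$. Since $(k_n)\subset\en$, the sequence is eventually equal to a single value $k\in\en$ on some set of $\ef$, and a (routine but nontrivial) interchange of the monotone limits in $r$ and in $\ef$ gives $\zeta_{\ze,\ef}^{\kai,0}=\delta_{\ze,\ef}$, whose kernel is maximal because $\ze$ is regular with respect to $\ef$.

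For the if direction, assume $\lim_\ef k_n=+\infty$. Regularity of $\ze$ with respect to $\ef$ is precisely the statement that $\mathbf{1}\in\com_\ef(\ze)$, so the seminorm $\zeta_{\ze,\ef}^{\mathbf{1},1}$ is defined, and by Corollary~\ref{lluc} its kernel is a proper nonmaximal ideal of $\hac$. The crux is the pointwise inequality
\[
\zeta_{\ze,\ef}^{\kai,0}(f)\ \ge\ \zeta_{\ze,\ef}^{\mathbf{1},1}(f)\qquad(f\in\hac).
\]
To prove it, fix $r,s\in(0,1)$. Since $k_n\to+\infty$ along $\ef$, the condition $k_n\ge\log r/\log s$, equivalent to $\sqrt[k_n]{r}\ge s$, holds on some set of $\ef$; on that set $D^+(z_n,s)\subseteq D^+(z_n,\sqrt[k_n]{r})$, and hence $\zeta_{D^+(z_n,\sqrt[k_n]{r})}(f)\ge\zeta_{D^+(z_n,s)}(f)$. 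Passing to $\lim_\ef$ preserves this inequality, giving $\zeta_{\ze,\ef}^{\kai,r}(f)\ge\zeta_{\ze,\ef}^{\mathbf{1},s}(f)$. Now $\zeta_{\ze,\ef}^{\kai,r}(f)$ is non-decreasing in $r$ with infimum $\zeta_{\ze,\ef}^{\kai,0}(f)$ as $r\to 0^+$, and $\zeta_{\ze,\ef}^{\mathbf{1},s}(f)$ is non-decreasing in $s$ with supremum $\zeta_{\ze,\ef}^{\mathbf{1},1}(f)$ as $s\to 1^-$, so taking the infimum over $r$ on the left and the supremum over $s$ on the right yields the displayed inequality.

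From $\zeta_{\ze,\ef}^{\kai,0}\ge\zeta_{\ze,\ef}^{\mathbf{1},1}$ we get $\ker\zeta_{\ze,\ef}^{\kai,0}\subseteq\ker\zeta_{\ze,\ef}^{\mathbf{1},1}$. Were $\ker\zeta_{\ze,\ef}^{\kai,0}$ maximal, being contained in the proper ideal $\ker\zeta_{\ze,\ef}^{\mathbf{1},1}$ would force equality, contradicting the nonmaximality of the latter supplied by Corollary~\ref{lluc}. Hence $\ker\zeta_{\ze,\ef}^{\kai,0}$ is nonmaximal. The main obstacle is recognizing that the inequality $\zeta_{\ze,\ef}^{\kai,0}\ge\zeta_{\ze,\ef}^{\mathbf{1},1}$ really goes in the stated direction in spite of $\zeta_{\ze,\ef}^{\kai,0}$ being the smallest seminorm in its own family $\{\zeta_{\ze,\ef}^{\kai,r}\}_{r\in[0,1)}$: the hypothesis $k_n\to\infty$ forces the effective radii $\sqrt[k_n]{r}$ to approach $1$ along $\ef$ for every fixed $r>0$, so those disks eventually dominate any disk of fixed radius $s<1$, which is exactly what drives the comparison with the $\mathbf{1}$-family.
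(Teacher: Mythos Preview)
Your proof is correct and follows essentially the same route as the paper's: compare $\zeta_{\ze,\ef}^{\kai,0}$ from below with a seminorm from the $\mathbf{1}$-family and invoke Corollary~\ref{lluc}. The paper uses the single inequality $\zeta_{\ze,\ef}^{\mathbf{1},1/2}\le\zeta_{\ze,\ef}^{\kai,0}$; you prove the stronger $\zeta_{\ze,\ef}^{\mathbf{1},1}\le\zeta_{\ze,\ef}^{\kai,0}$ by first establishing $\zeta_{\ze,\ef}^{\mathbf{1},s}\le\zeta_{\ze,\ef}^{\kai,r}$ for all $r,s\in(0,1)$ and then passing to the limits.

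One small citation issue: Corollary~\ref{lluc} is stated for points of $\mak$, and by definition $\mak$ consists of the seminorms $\zeta_{\ze,\ef}^{\kai,r}$ with $r\in(0,1)$, so $\zeta_{\ze,\ef}^{\mathbf{1},1}$ is not literally covered. You can repair this either by invoking Corollary~\ref{pase} (which gives $\ker\zeta_{\ze,\ef}^{\mathbf{1},1}=\ker\zeta_{\ze,\ef}^{\mathbf{1},1/2}$) before applying Corollary~\ref{lluc}, or---more simply---by stopping your argument at the intermediate inequality $\zeta_{\ze,\ef}^{\kai,0}\ge\zeta_{\ze,\ef}^{\mathbf{1},s}$ for a fixed $s\in(0,1)$, which already suffices and is exactly what the paper does with $s=1/2$. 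The final passage to $s\to 1$ is unnecessary for the conclusion.
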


Next, if $\varphi_{\ze, \ef}^{\kai, r}, \varphi_{\ze, \ef}^{\eli, s}   \in   \mim$ do not belong to $\mak$, then $\varphi_{\ze, \ef}^{\kai, r} = \varphi_{\ze, \ef}^{\eli, s} $. That is, all points
in $ \mim$ (with nonmaximal kernel) belong to $\mak$ but at most one:

\begin{thm}\label{juntafaculta}
Given
$\varphi = \varphi_{\ze, \ef}^{\kai, r}   \in   \mim$,   either   $\varphi   \in \muz$  or
 $$\varphi = \sup_{\emi \in \com_{\ef} (\ze)} \zeta_{\ze, \ef}^{\emi,1}.$$
 \end{thm}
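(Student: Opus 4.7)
The proof splits according to whether $\kai \in \com_{\ef} (\ze)$. If so, then by the very definition of $\zeta_{\ze, \ef}^{\kai, r}$ (valid precisely because $\kai \in \com_{\ef} (\ze)$) together with $r \in (0,1)$, we have $\varphi = \varphi_{\ze, \ef}^{\kai, r} = \zeta_{\ze, \ef}^{\kai, r} \in \muz$, giving the first alternative. The rest of the argument assumes $\kai \notin \com_{\ef} (\ze)$, and aims to establish the pointwise identity $\varphi (f) = \sup_{\emi \in \com_{\ef} (\ze)} \zeta_{\ze, \ef}^{\emi, 1} (f)$ for every $f \in \hac$.

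For the inequality $\zeta_{\ze, \ef}^{\emi, 1} \le \varphi$, fix $\emi \in \com_{\ef} (\ze)$ with a witness $C_1 \in \ef$ giving $M := \inf_{n \in C_1} \prod_{m \in C_1,\, m \neq n} \va z_n - z_m \vb^{m_m} > 0$. The key step is to show that $k_n / m_n \to \infty$ along $\ef$. Indeed, if $A_N := \tl n \in \en : k_n \le N m_n \tr \in \ef$ for some $N \in \en$, then for every $n$ in $C := A_N \cap C_1 \in \ef$, the inequalities $\va z_n - z_m \vb < 1$ together with $C \subset C_1$ yield
$$\prod_{\substack{m \in C \\ m \neq n}} \va z_n - z_m \vb^{k_m} \ge \prod_{\substack{m \in C \\ m \neq n}} \va z_n - z_m \vb^{N m_m} \ge M^N > 0,$$
contradicting $\kai \notin \com_{\ef} (\ze)$. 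Once $k_n / m_n \to \infty$ along $\ef$, for any $r' \in (0,1)$ one has $(r')^{1/m_n} \le r^{1/k_n}$ for all $n$ in some element of $\ef$, hence $\zeta_{D^+ (z_n, (r')^{1/m_n})} (f) \le \zeta_{D^+ (z_n, r^{1/k_n})} (f)$; taking $\lim_{\ef}$ yields $\zeta_{\ze, \ef}^{\emi, r'} \le \varphi$, and letting $r' \to 1^-$ completes this half.

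The main obstacle is the reverse inequality $\varphi \le \sup_{\emi \in \com_{\ef} (\ze)} \zeta_{\ze, \ef}^{\emi, 1}$. Given $f \in \hac$ and $\epsilon > 0$, one must exhibit some $\emi \in \com_{\ef} (\ze)$ and $r' < 1$ with $\zeta_{\ze, \ef}^{\emi, r'} (f) \ge \varphi (f) - \epsilon$. Expanding $f = \sum_i b_i^{(n)} (z - z_n)^i$ at each $z_n$, one has $\zeta_{D^+ (z_n, r^{1/k_n})} (f) = \sup_i \va b_i^{(n)} \vb r^{i/k_n}$; pick $i_n$ nearly attaining this sup, so that $\va b_{i_n}^{(n)} \vb r^{i_n / k_n} \ge \varphi (f) - \epsilon / 2$ holds on an element of $\ef$. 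Combined with $\va b_{i_n}^{(n)} \vb \le \vc f \vd$, this forces $i_n / k_n$ to stay uniformly bounded. The delicate point is then to select, along an $\ef$-large subset, an integer sequence $\emi$ that is small enough to lie in $\com_{\ef} (\ze)$ (exploiting the regularity of $\ze$ and the pairwise disjointness of the disks $D^+ (z_n, r^{1/k_n})$ provided by $\varphi \in \mim$), yet large enough that $(r')^{i_n / m_n}$ approximates $r^{i_n / k_n}$ for $r'$ sufficiently close to $1$. The boundedness of $i_n / k_n$ provides the arithmetic slack for balancing these two requirements; arranging that a single $\emi$ works uniformly along the ultrafilter is the essential technical step.
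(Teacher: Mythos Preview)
Your first half is correct and matches the paper: the case $\kai \in \com_{\ef}(\ze)$ is immediate, and the argument that $\lim_{\ef} m_n/k_n = 0$ for every $\emi \in \com_{\ef}(\ze)$ (hence $\zeta_{\ze,\ef}^{\emi,1} \le \varphi$) is exactly the paper's.

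The reverse inequality, however, is not proved --- you explicitly flag the construction of a suitable $\emi$ as ``the essential technical step'' and then stop. The paper resolves this cleanly, and the key idea you are missing is to take $\emi$ to be the \emph{zero-count} sequence $m_n := \cer\bigl(f, D^-(z_n, r^{1/k_n})\bigr)$ rather than an index coming from a Taylor expansion. Two facts then fall out almost for free. First, $\emi \in \com_{\ef}(\ze)$: since $\varphi(f) > 0$, Corollary~\ref{conillosis} gives a uniform lower bound on $\xi_{D^+(z_n, r^{1/k_n})}(f)$ along $\ef$, and because the disks $D^+(z_n, r^{1/k_n})$ are pairwise disjoint (this is where $\varphi \in \mim$ is used), each of the $m_j$ zeros near $z_j$ ($j \neq n$, $|z_j|=|z_n|$) contributes a factor $|z_n - z_j|$ to that $\xi$-value, so $\prod_{j \neq n} |z_n - z_j|^{m_j} \ge \xi_{D^+(z_n, r^{1/k_n})}(f)$ is bounded below. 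Second, the exact equality $\varphi(f) = \zeta_{\ze,\ef}^{\emi,1}(f)$: Lemma~\ref{correicion} with $s = M^{1/m_n}$ and the fact that the exponent in that lemma is precisely $m_n$ give the sandwich $M\,\varphi(f) \le \zeta_{\ze,\ef}^{\emi,M}(f) \le \varphi(f)$ for every $M \in (0,1)$; let $M \to 1$.

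Your Taylor-index $i_n$ is, via the Newton-polygon theory of Section~\ref{complauton}, essentially the same number as $m_n$ (it is $\nu_{r^{1/k_n}}$ of $f$ recentred at $z_n$), so you were close --- but without that identification you have no mechanism to verify the $\com_{\ef}(\ze)$ condition, and the vague balancing of ``small enough / large enough'' cannot be completed as written.
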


\begin{cor}\label{sussosoplador}
Let $ \varphi = \varphi_{\ze, \ef}^{\kai, r} \in \mim$, where $(\va z_n \vb)$ is strictly increasing. Then  either $\varphi \in \mak$ or $\varphi = \vc \ \vd$.
\end{cor}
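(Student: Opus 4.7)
The plan is to invoke Theorem~\ref{juntafaculta}, which in our situation dichotomizes: writing $\varphi = \varphi_{\ze,\ef}^{\kai,r}$, either $\varphi \in \muz$, in which case $\varphi \in \mak$ via the inclusions $\muz \subseteq \maz \subseteq \mak$, or
$$\varphi \;=\; \sup_{\emi \in \com_{\ef}(\ze)} \zeta_{\ze,\ef}^{\emi,1}.$$
In the first alternative there is nothing to prove. Under the second alternative I intend to show that this supremum equals the Gauss norm $\vc\,\cdot\,\vd$; since $\varphi(f)\le\vc f\vd$ holds automatically for every continuous multiplicative seminorm, only the reverse inequality requires work.

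The strict monotonicity $|z_1|<|z_2|<\cdots$ is the decisive hypothesis: it gives $|z_n-z_m|=\max(|z_n|,|z_m|)$ for $n\ne m$, so for any candidate $\emi$ and witnessing $C\in\ef$ the regularity product factorises as
$$\prod_{\substack{m\in C\\ m\ne n}}|z_n-z_m|^{m_m}\;=\;|z_n|^{\sum_{m\in C,\,m<n}m_m}\cdot\prod_{\substack{m\in C\\ m>n}}|z_m|^{m_m}.$$
Membership $\emi\in\com_\ef(\ze)$ thus reduces to two explicit constraints on $C$: convergence of the infinite product $\prod_{m\in C}|z_m|^{m_m}$ (equivalently $\sum_{m\in C}m_m(1-|z_m|)<\infty$) together with boundedness of $(1-|z_n|)\sum_{m\in C,\,m<n}m_m$. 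Since $\varphi\in\mim$ guarantees $\mathbf{1}\in\com_\ef(\ze)$, some $C\in\ef$ satisfies $\sum_{m\in C}(1-|z_m|)<\infty$, leaving ample slack.

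I then construct $\emi\in\com_\ef(\ze)$ with $\lim_\ef m_n = +\infty$: enumerate $C=\{n_1<n_2<\cdots\}$, set $a_k=1-|z_{n_k}|$ and $B_k=\sum_{j\ge k}a_j\to 0$, and choose $m_{n_k}=\lfloor B_k^{-1/2}\rfloor$ (adjusted on a cofinal sub-$C'\in\ef$ if needed to keep the partial-sum condition bounded); a standard telescoping estimate shows $\sum m_{n_k}a_k<\infty$, while $m_{n_k}\to\infty$. For this $\emi$, expanding $f(z)=\sum_{j\ge 0}b_j^n(z-z_n)^j$ around $z_n$ yields
$$\zeta_{D^+(z_n,\sqrt[m_n]{r})}(f)=\sup_j|b_j^n|\,r^{j/m_n},$$
and since $D^-(z_n,1)=\de$ for $|z_n|<1$, the Gauss norm of the Taylor series at $z_n$ satisfies $\sup_j|b_j^n|=\vc f\vd$. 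As $m_n\to\infty$ along $\ef$ and $r\to 1^-$, one deduces $\zeta_{\ze,\ef}^{\emi,1}(f)=\vc f\vd$, so the supremum reaches $\vc\,\cdot\,\vd$ as required.

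The hardest part will be the double-limit bookkeeping. For fixed $r<1$, the condition $\emi\in\com_\ef(\ze)$ forces $m_n(1-|z_n|)\to 0$ on $\ef$, whence $r^{1/m_n}<|z_n|$ along $\ef$; consequently the supremum $\sup_j|b_j^n|\,r^{j/m_n}$ is effectively concentrated at low indices $j$, and $\sup_j|b_j^n|=\vc f\vd$ need not be attained at $j=0$ but only in the limit. Proving that the low-$j$ coefficients already approximate $\vc f\vd$ uniformly along $\ef$ (e.g.\ by truncating $f$ to a polynomial of sufficiently high degree and estimating the tail, and using the density of $\va\ka^\times\vb$ in $\er^+$) is the technical core of the argument.
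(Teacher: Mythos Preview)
Your argument in the second alternative contains a genuine error. You construct a single sequence $\emi\in\com_\ef(\ze)$ with $\lim_\ef m_n=+\infty$ and claim that $\zeta_{\ze,\ef}^{\emi,1}(f)=\Vert f\Vert$ for every $f\in\hac$. This is impossible: by Corollary~\ref{pase} the seminorm $\zeta_{\ze,\ef}^{\emi,1}$ has the same kernel as $\zeta_{\ze,\ef}^{\emi,1/2}\in\mak$, and by Corollary~\ref{lluc} that kernel is nonzero. Concretely, the construction in the proof of Corollary~\ref{lluc} (via Proposition~\ref{gatillo}) yields $f\neq 0$ with $l_n m_n$ zeros in $D^+\!\bigl(z_n,\sqrt[m_n]{r_0}\bigr)$ for a sequence $l_n\to\infty$, and for such $f$ one has $\zeta_{\ze,\ef}^{\emi,r}(f)=0$ for all $r\in(0,1]$. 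The ``double-limit bookkeeping'' you correctly flag as the hardest step is precisely where the argument breaks: the low-index Taylor coefficients $|b_j^n|$ need \emph{not} approximate $\Vert f\Vert$ uniformly along $\ef$, because $f$ can be forced to vanish to arbitrarily high order near each $z_n$. The supremum in Theorem~\ref{juntafaculta} is over \emph{all} $\emi\in\com_\ef(\ze)$, and no individual term attains $\Vert\,\cdot\,\Vert$.

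The paper's proof is both different and shorter; it does not pass through the supremum formula at all. It splits on whether $\ker\varphi$ is trivial. If $\ker\varphi=\{0\}$, Proposition~\ref{josejrut} forces $\varphi=\Vert\,\cdot\,\Vert$. If instead some $f\neq 0$ lies in $\ker\varphi$, set $\cer_n:=\cer\bigl(f,C(0,|z_n|)\bigr)$. From Corollary~\ref{conillosis} one gets $\lim_\ef\xi_{D^+(z_n,\sqrt[k_n]{r})}(f)=0$, hence $\lim_\ef r^{\cer_n/k_n}=0$, i.e.\ $\lim_\ef\cer_n/k_n=+\infty$; in particular $\cer_n\ge k_n$ on some $C\in\ef$. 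The norm estimate $\Vert f\Vert\ge\prod_{n\in C}|z_n|^{-\cer_n}$ then yields $\prod_{n\in C}|z_n|^{k_n}>0$, and with the strict monotonicity of $(|z_n|)$ (via the same product factorisation you wrote down) this is exactly $\kai\in\com_\ef(\ze)$. Hence $\varphi=\zeta_{\ze,\ef}^{\kai,r}\in\muz\subset\mak$. Your route could perhaps be salvaged by choosing $\emi$ depending on $f$ (adapted to the zero counts $\cer_n$ of $f$), but that essentially reproduces the paper's computation in disguise.
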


The next result says that there are some kernels that cannot be obtained through seminorms in $\mak$. In fact we  see that, among the seminorms of the form $\zeta^{\kai, r}_{\ze, \ef}$, those with $r =0$ are the only ones that are characterized by their kernel. This  should be compared with Corollary~\ref{lz}. Example~\ref{nicolas} shows that the statement cannot be generalized to other seminorms defined in a similar way.

\begin{thm}\label{zerolari}
Let
$\ze$ be a regular sequence with respect to $\ef \in \beta \en \setminus \en$.  Given
  $\kai \in  \com_{\ef}  (\ze) $,
     $$\ker \zeta_{\ze, \ef}^{\kai, 0} \neq \ker \varphi$$ for every $\varphi \in \mak$.

     Moreover, for $ \af \in \beta \en \setminus \en$ and a regular sequence $\we$ with respect to $\af$, if $ \eli   \in
 \com_{\af}  (\we)$ and $\ker \zeta^{\kai, 0}_{\ze, \ef} = \ker \zeta^{\eli, 0}_{\we, \af}$, then $\zeta^{\kai, 0}_{\ze, \ef} =   \zeta^{\eli, 0}_{\we, \af}$.
\end{thm}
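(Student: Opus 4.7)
The proof splits according to the two assertions. For the first, I immediately dispose of the case $\lim_\ef k_n < +\infty$: then $\zeta^{\kai,0}_{\ze,\ef} = \delta_{\ze,\ef}$ has maximal kernel (as noted just before Corollary~\ref{lucia}), whereas every $\varphi \in \mak$ has nonmaximal kernel by Corollary~\ref{lluc}, so the kernels cannot coincide. In the remaining case $\lim_\ef k_n = +\infty$, Corollary~\ref{lucia} yields that $\ker \zeta^{\kai,0}_{\ze,\ef}$ is nonzero and nonmaximal, and the argument becomes substantive.

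Here I would argue by contradiction: assume $\ker \zeta^{\kai,0}_{\ze,\ef} = \ker \varphi$ for some $\varphi \in \mak$. By Corollary~\ref{pase} we may take $\varphi = \zeta^{\eli,1}_{\we,\af}$ with $\we$ regular w.r.t.\ $\af$ and $\eli \in \com_\af(\we)$. The plan is to produce $f \in \hac$ that lies in $\ker \zeta^{\kai,0}_{\ze,\ef}$ but not in $\ker \zeta^{\eli,1}_{\we,\af}$. Using $\kai \in \com_\ef(\ze)$, Remark~\ref{honble} (so that the disks $D^+(z_n,\sqrt[k_n]{r_0})$ are pairwise disjoint for suitably small $r_0$), and the zero-prescription results of Section~\ref{gatopo}, I would construct $f$ with a zero of multiplicity exactly $k_n$ at each $z_n$ and with uniformly bounded leading Taylor coefficient $a^{(n)}_{k_n}$. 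The resulting estimate $\zeta_{D^+(z_n,\sqrt[k_n]{r})}(f) \leq (\mathrm{const}) \cdot r$ for $r$ small gives $\zeta^{\kai,0}_{\ze,\ef}(f) = 0$. The assumed equality of kernels then forces $f$ to vanish suitably on every $\af$-disk $D^+(w_n,\sqrt[l_n]{s})$ for all $s \in (0,1)$; a zero-counting argument using the regularity of $\we$ combined with Section~\ref{gatopo} yields the contradiction. The main obstacle is the situation where the $\ze$-disks and $\we$-disks substantially overlap in the ultralimit: I would handle this by replacing $\kai$ with a sequence $\emi$ of multiplicities satisfying $\lim_\ef m_n/k_n = 0$, producing a modified $f$ still in $\ker \zeta^{\kai,0}_{\ze,\ef}$ (because $r^{m_n/k_n} \to 0$) but with rate of vanishing too slow to lie in $\ker \zeta^{\eli,1}_{\we,\af}$.

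For the moreover part, both $\zeta^{\kai,0}_{\ze,\ef}$ and $\zeta^{\eli,0}_{\we,\af}$ lie in $\mil$ by Proposition~\ref{tobien}, and by Theorem~\ref{nadabouche} each has the form $\delta_{\we',\af'}$. The key observation is that for any such $\delta = \delta_{\we',\af'} \in \mil$ and any $f \in \hac$, whenever $c \in \ka$ satisfies $f - c \in \ker \delta$ (equivalently $\lim_{\af'} f(w'_n) = c$), ultrametric multiplicativity forces $\delta(f) = |c|$. Since the kernel alone determines the set of admissible scalars $c$, equality of kernels gives equality of values on every $f$ admitting such a $c$ in $\ka$. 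The principal technical hurdle is extending to arbitrary $f$ (for which no such $c \in \ka$ need exist): here I would employ a density/approximation argument exploiting that $|\ka^\times|$ is dense in $\er^+$ and that suitable constant-translates and polynomial perturbations of $f$ are available in $\hac$, reducing the general case to the tractable one by continuity of the seminorms in the simple-convergence topology on $\ma$.
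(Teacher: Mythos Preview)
Your proposal has substantive gaps in both parts.

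\textbf{First part.} Your handling of the ``overlap'' obstacle contains a sign error that breaks the argument. You propose replacing $\kai$ by $\emi$ with $\lim_\ef m_n/k_n = 0$ and constructing $f$ with $m_n$ zeros near each $z_n$, claiming $f$ remains in $\ker\zeta^{\kai,0}_{\ze,\ef}$ ``because $r^{m_n/k_n}\to 0$''. But $\lim_\ef m_n/k_n = 0$ gives $\lim_\ef r^{m_n/k_n}=r^0=1$, not $0$; hence $\xi_{D^+(z_n,\sqrt[k_n]{r})}(f)$ stays bounded away from $0$ and $f\notin\ker\zeta^{\kai,0}_{\ze,\ef}$. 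If instead you meant $m_n/k_n\to\infty$, then $f$ has \emph{more} zeros, not fewer, and there is no reason its vanishing should be ``too slow'' for $\ker\zeta^{\eli,1}_{\we,\af}$. The paper proceeds in the opposite direction: it builds $g$ with $d_m l_m$ zeros near the $w_m$ (with $\lim_\af d_m=\infty$) so that $g\in\ker\zeta^{\eli,s_0}_{\we,\af}$ automatically, and then expends all the effort (via Lemma~\ref{tus} and a sequence of delicate claims controlling the quantities $\Lambda_n^r$, $\Upsilon_n^r$, $\Sigma_n^r$) on showing $\zeta^{\kai,0}_{\ze,\ef}(g)>0$. The combinatorics of choosing the multipliers $d_m$ so that the resulting zero distribution is simultaneously heavy along $\af$ and light along $\ef$ is the heart of the proof, and nothing in your sketch addresses it.

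\textbf{Second part.} Your argument proves too much. The observation ``$f-c\in\ker\delta$ forces $\delta(f)=|c|$'' is correct, and for $f\in\ka+\ker\delta$ the kernel indeed determines $\delta(f)$. But your proposed extension to arbitrary $f$ by approximation cannot work: if it did, \emph{any} two elements of $\ma$ with equal kernel would coincide, directly contradicting Corollary~\ref{pase}, which exhibits uncountably many distinct seminorms $\zeta^{\kai,r}_{\ze,\ef}$, $r\in(0,1]$, sharing a single kernel. The paper's route is entirely different: Lemma~\ref{tus} shows that equal kernels force each $\af$-large set of $w_m$'s to sit inside the $\ef$-family of disks $D^+(z_n,\sqrt[k_n]{r})$ (and symmetrically), from which one extracts a bijection $\mathbf{j}:D_a\to C_a$ between index sets compatible with the ultrafilters. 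One then computes $L=\lim_\af l_m/k_{\mathbf{j}(m)}$, rules out $L\in\{0,+\infty\}$ via Corollary~\ref{lz}, and for $0<L<\infty$ obtains $\zeta^{\kai,r}_{\ze,\ef}=\zeta^{\eli,r^L}_{\we,\af}$ for small $r$ by Corollary~\ref{pernia}, whence the limits as $r\to 0$ agree. The argument is specific to the $r=0$ endpoint and does not extend to $r\in(0,1]$, which is exactly what Theorem~\ref{zerolari} asserts.
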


We finish our list of main results with a theorem linking the Berkovich space  of the Tate algebra $T_1$  with $\ma$. Recall that $T_1$ is the Banach algebra of analytic functions on the closed unit disk $D^+ \pl 0, 1 \pr$, that is, the space of all power series with coefficients in $\ka$ converging on $D^+ \pl 0, 1 \pr$. It coincides with the subspace of $\hac$ consisting of all power series $\sum_{n=0}^{\infty} a_n z^n$ with $\lim_{n \ra \infty} \va a_n \vb =0$, and contains the polynomial algebra $\ka [z]$ as a dense subset. 

The Berkovich space  $\mathscr{M} \pl T_1   \pr $  is well known (see for instance \cite[1.4.4]{B}). Each $\varphi \in \mathscr{M} (T_1)$ can be written in terms of (a limit of) seminorms $\zeta_{D^+ \pl a, r \pr}$, in such a way that there is a natural extension of each $\varphi$ to a   $\mathbf{i} \pl \varphi \pr \in \ma$ defined in the same terms. We put $\mathscr{M}^* := \mathscr{M} \pl  T_1 \pr \setminus \tl \vc \ \vd \tr  $.

\begin{thm}\label{casitadechocolate}
The canonical map
$$\mathbf{i}: \mathscr{M}^* \ra \mathbf{i} \pl \mathscr{M}^* \pr \subset \ma$$
 is a homeomorphism. Moreover $\mathbf{i} \pl \mathscr{M}^* \pr  $ is open in $\ma$, and $\mathscr{M} \pl  T_1 \pr$ is homeomorphic to a quotient of $\ma$.
\end{thm}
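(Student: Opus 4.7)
The plan is to analyze the restriction map $\rho : \ma \ra \mathscr{M}\pl T_1 \pr$, $\rho (\psi) := \psi|_{T_1}$, which is manifestly well-defined and continuous in the topology of simple convergence. By the very construction of $\mathbf{i}$ (the extension of each $\zeta_{D^+ \pl a , r \pr}$ with $r < 1$ to $\hac$ commutes with taking limits), one has $\rho \circ \mathbf{i} = \mathrm{id}_{\mathscr{M}^*}$. This immediately yields injectivity of $\mathbf{i}$ and makes $\rho|_{\mathbf{i} \pl \mathscr{M}^* \pr}$ a continuous inverse. The quotient statement then follows at once: $\rho$ is a continuous surjection (its image hits $\mathscr{M}^*$ via $\mathbf{i}$ and hits $\vc \ \vd_{T_1}$ via $\vc \ \vd_{\hac}$) from the compact space $\ma$ onto the Hausdorff space $\mathscr{M}\pl T_1 \pr$, hence a closed quotient map.

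It remains to prove continuity of $\mathbf{i}$ and openness of $\mathbf{i} \pl \mathscr{M}^* \pr$ in $\ma$. Both reduce to a single \emph{approximation claim}: for every $\psi \in \ma$ with $\rho (\psi) = \varphi \in \mathscr{M}^*$ and every $f \in \hac$, there exist $g_N \in T_1$ with $\psi (f - g_N) \ra 0$. Granted this, $\psi (f) = \lim_N \varphi (g_N)$ is determined by $\varphi$ alone, so $\rho$ is injective on $\rho^{-1} \pl \mathscr{M}^* \pr$; this forces $\mathbf{i} \pl \mathscr{M}^* \pr = \rho^{-1} \pl \mathscr{M}^* \pr = \ma \setminus \rho^{-1} \pl \tl \vc \ \vd_{T_1} \tr \pr$, which is open by continuity of $\rho$ and closedness of $\tl \vc \ \vd_{T_1} \tr$ in $\mathscr{M}\pl T_1 \pr$. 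Continuity of $\mathbf{i}$ follows analogously, provided the estimate in the approximation claim is uniform on suitable neighborhoods of $\varphi$.

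To establish the approximation claim, represent $\varphi$ by a nested family of disks $D^+ \pl a_{\alpha} , r_{\alpha} \pr$ with $r_{\alpha} < 1$. In the \emph{benign subcase} where one can arrange $\va a_{\alpha} \vb < 1$ (so each disk lies in $\de$), $\psi (z) \le \max \pl \va a_{\alpha} \vb , r_{\alpha} \pr < 1$, and taking $g_N := \sum_{n \le N} a_n z^n$ to be the partial sums of $f = \sum a_n z^n$, the multiplicativity and continuity of $\psi$ give
$$
\psi (f - g_N) \le \vc f \vd \cdot \psi (z)^{N+1} \longrightarrow 0 ,
$$
with the bound uniform on any neighborhood of $\varphi$ along which $\psi(z)$ stays bounded away from $1$. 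The \emph{hard subcase} is when $\va a_{\alpha} \vb = 1$ for all $\alpha$: then $\psi (z) = 1$ and the above estimate is vacuous, although $\psi (z - a) = r < 1$ suggests expanding in powers of $z - a$ instead. The difficulty is that the formal Taylor coefficients $b_k := \sum_{n \ge k} \binom{n}{k} a^{n-k} a_n$ of a general $f \in \hac$ around $a \in C(0,1)$ need not converge when $(a_n)$ does not tend to $0$, so naive polynomial truncations in $z - a$ are not available. Handling this subcase is the main technical obstacle; the natural strategy is a Weierstrass-type preparation of $f$, based on the construction of bounded analytic functions with prescribed zeros from Section~\ref{gatopo}, that splits off the behaviour of $f$ near $D^+ \pl a, r \pr$ into a factor lying in $T_1$, leaving a complementary factor on which $\psi$ can be controlled independently.
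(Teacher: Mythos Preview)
Your ``hard subcase'' is empty, and recognising this is the missing step. If $|a|=1$ then $z-a$ is a \emph{unit} in $\hac$: indeed $(z-a)^{-1}=-\sum_{k\ge 0}a^{-k-1}z^k\in\hac$ with $\vc(z-a)^{-1}\vd=1$, so every $\psi\in\ma$ satisfies $\psi(z-a)=1$. Consequently, if $\psi(z)=1$ then $\psi(z-a)=\max\{1,|a|\}=\vc z-a\vd$ for \emph{every} $a\in\ka$, and since $\ka$ is algebraically closed this forces $\psi=\vc\ \vd$ on $\ka[z]$, hence $\rho(\psi)=\vc\ \vd$. Contrapositively, $\rho(\psi)\in\mathscr{M}^*$ already implies $\psi(z)<1$, and your benign truncation argument finishes the approximation claim with no Weierstrass preparation needed. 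This observation is precisely what lies behind the opening move of Proposition~\ref{josejrut} (``we can find $r\in(0,1)$ with $\psi\le\zeta_{D^+(0,r)}$''); the paper then argues via the multiplicative factorisation $f=Pg$ rather than your additive truncation, but once $\psi(z)<1$ is known both routes are straightforward.

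There is, however, a genuine gap in your treatment of the quotient statement. The same unit computation shows that $\rho:\ma\to\mathscr{M}(T_1)$ is \emph{not} surjective: no $\psi\in\ma$ restricts to $\zeta_{D^+(a,r)}\in\mathscr{M}^*$ when $|a|=1$ and $r<1$, so the image of $\rho$ is only $\{\varphi\in\mathscr{M}(T_1):\varphi(z)<1\}\cup\{\vc\ \vd\}$. Your deduction that $\mathscr{M}(T_1)$ is a quotient of $\ma$ via $\rho$ therefore fails (and for the same reason your identity $\rho\circ\mathbf{i}=\mathrm{id}_{\mathscr{M}^*}$ cannot hold on those boundary points). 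The paper sidesteps this by using not $\rho$ but the map $T$ that agrees with $\mathbf{i}^{-1}$ on $\mathbf{i}(\mathscr{M}^*)$ and sends the complement to $\vc\ \vd$; this $T$ is surjective by construction, and one then checks continuity and closedness directly.
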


\section{Some technical results}\label{complauton}

We begin this section by giving some well known results concerning the zeros of analytic functions. Suppose that
$f (z) =   1 + \sum_{n=1}^{\infty} a_n z^n \in \hac$. For each $r \in [0, 1)$, let
$M_r (f) := \max_{n \ge 0} \va a_n \vb r^n$.
We say that $r \in (0,1)$ is a critical radius for $f$ if there are at least two distinct indices $m, k$ such that
$$M_r (f) = \va a_m \vb r^m = \va a_k \vb r^k.$$
It turns out that $r$ is a critical radius for $f$ if and only if $C(0,r)$ contains a zero of $f$. Indeed, the number of zeros (taking into  account multiplicities)  located in $C(0,r)$ coincides with the number
$$\cer (f, C(0,r)) = \nu_r (f) - \mu_r (f)$$
where $\nu_r (f)$ and $\mu_r (f)$ are defined, respectively, as the greatest and the smallest $n$ such that $\va a_n \vb r^n = M_r (f)$ (see for instance \cite[Section 2.2, Theorem 1]{R} for a proof when  $\ka$  is an algebraically closed extension of $\qu_p$ but valid also for our $\ka$).
It is clear from the definition that, if $r <s$, then $\nu_r (f) \le \mu_s (f)$. In fact, the
critical radii form an increasing (finite or infinite) sequence $(R_n)$
satisfying  $\mu_{R_n} (f) =  \nu_{R_{n-1}} (f) $ for all $n \ge 2$ that, when infinite, has $1$ as its only accumulation point.

Hence, if $r \in (0,1)$ is not a critical radius, then there exists only one $n_r \in \en$ with  $\va a_{n_r} \vb r^{n_r} = M_r (f)$
and $\va f (z) \vb = \va a_{n_r} \vb r^{n_r}$ for all $z$ with $\va z \vb =r$. It turns out that $n_r = \nu_{R_i} (f)$, where $R_i$ is the greatest critical radius strictly less than $r$, if there is any, and $n_r  =\mu_{R_1} (f) =0$ otherwise.

On the other hand, writing $\nu_n =\nu_{R_n} (f)$ for short,    we see that $\va f (0) \vb = 1 = \va a_{\nu_1}  \vb {R_1}^{\nu_1}$ and
$ \va a_{\nu_1} \vb = 1/{R_1}^{\nu_1}$.
Also,   $\va a_{\nu_1}  \vb {R_2}^{\nu_1} = \va a_{\nu_2}  \vb {R_2}^{\nu_2} $, giving
$\va a_{\nu_2} \vb = 1/\pl {R_1}^{\nu_1} {R_2}^{\nu_2 - \nu_1} \pr = 1 /\prod_{i=1}^2 {R_i}^{\cer (f, C(0, R_i))}$.
For all $n$, this process leads to
 $ \va a_{\nu_n} \vb = 1/\prod_{i=1}^n {R_i}^{\cer (f, C(0, R_i))} $.
We finally remark that $$\vc f \vd = \sup_{n} \va a_{\nu_n} \vb =  \frac{1}{ \prod_{i=1}^{\infty} {R_i}^{\cer (f, C(0, R_i))}} .$$
\medskip

We continue with the results of this section. The proof of the following lemma is easy.

 \begin{lem}\label{khatxarel}
Suppose that  $f \in \hac$ has exactly $k$ zeros $w_1, \ldots, w_k$ of absolute value $R \in (0,1)$. Then
$f (z) = g  (z) \prod_{i=1}^k (z - w_i)$, where $g \in \hac$ has no zeros of absolute value $R$. Also,
$\vc f \vd = \vc g \vd$.
\end{lem}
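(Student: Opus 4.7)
The plan is to first establish the factorization by iteratively dividing out the zeros $w_1,\ldots,w_k$, and then to compare norms on circles $|z|=r$ with $r>R$, where the ultrametric inequality forces $|z-w_i|=r$.

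For the factorization, I would first handle the one-zero case: if $w\in\de$ is a zero of $h\in\hac$, then $h(z)/(z-w)\in\hac$. Expanding $h(z)=\sum_{n\ge 0}a_nz^n$ and using $h(w)=0$ together with the identity $z^n-w^n=(z-w)\sum_{j=0}^{n-1}z^jw^{n-1-j}$, the quotient equals $\sum_{m\ge 0}b_mz^m$ with $b_m=\sum_{n>m}a_nw^{n-m-1}$. Since $|w|<1$, the ultrametric inequality yields $|b_m|\le\max_{n>m}|a_n|\le\vc h\vd$, so the quotient is bounded and lies in $\hac$. Applying this successively to $w_1,\ldots,w_k$ produces $g\in\hac$ with $f(z)=g(z)\prod_{i=1}^k(z-w_i)$. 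Since the zeros of $g$ in $\de$ are exactly those of $f$ other than $w_1,\ldots,w_k$, $g$ has no zero of absolute value $R$.

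For the norm equality, I would use that for any $h(z)=\sum_n a_nz^n$ in $\hac$ one has $\vc h\vd=\lim_{r\to 1^-}\max_{|z|=r}|h(z)|$: for $r$ avoiding the (at most countable) critical radii of $h$, one has $\max_{|z|=r}|h(z)|=\max_n|a_n|r^n$ by the recollection at the start of this section, and a short $\varepsilon$-argument shows this tends to $\sup_n|a_n|=\vc h\vd$ as $r\to 1^-$. Now fix $r\in(R,1)$. For $z$ with $|z|=r>R=|w_i|$, the ultrametric inequality gives $|z-w_i|=r$, so $|f(z)|=r^k|g(z)|$ on the whole circle $|z|=r$. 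Taking the maximum and letting $r\to 1^-$,
\[
\vc f\vd=\lim_{r\to 1^-}r^k\max_{|z|=r}|g(z)|=\vc g\vd.
\]

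The argument is essentially routine; the only point deserving care is the boundedness of the quotient after factoring out a single zero, which is handled by the ultrametric coefficient estimate above. Everything else reduces to properties of the Newton polygon already recalled just before the statement of the lemma.
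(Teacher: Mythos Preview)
Your proof is correct. The paper itself omits the argument entirely, stating only that ``the proof of the following lemma is easy,'' so there is nothing to compare against; your approach---factoring out each zero via the explicit coefficient formula $b_m=\sum_{n>m}a_nw^{n-m-1}$ with the ultrametric bound, and then comparing sup norms on circles $|z|=r$ with $r\in(R,1)$ where $|z-w_i|=r$---is the natural one and fills in exactly what the paper leaves to the reader.
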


  \begin{lem}\label{sinaltaboces}
Let $f  \in \hac$ be such that $f(0) =1$, and suppose that its
  critical radii   are $R_1 < R_2 < \cdots <1$.
Suppose also that for each $i \in \en$,
 $f$ has exactly $m_i$ zeros $w_1^i, \ldots, w_{m_i}^i$ in  $C(0, R_i)$.
 Then, given
 $z \in \de$ with $\va z \vb = R_k $,
$$\va f(z ) \vb =  \frac{{R_k}^{m_1 + \cdots + m_{k-1}}  \prod_{j=1}^{m_k} \va z - w_j^k \vb}{\prod_{i=1}^{k} {R_i}^{m_i}} . $$
Similarly, if $R_k < R:=  \va z \vb <  R_{k+1} $, then
$$\va f(z ) \vb =  \frac{{R}^{m_1 + \cdots + m_k}}{\prod_{i=1}^{k} {R_i}^{m_i}} . $$
\end{lem}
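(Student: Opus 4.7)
I start with the second formula, which is essentially a transcription of the preamble. For $R_k < R := \va z\vb < R_{k+1}$, $R$ is not a critical radius, so $\va f(z)\vb = \va a_{n_R}\vb R^{n_R}$ with $n_R = \nu_{R_k}(f)$. Because $f(0) = 1$ forces $a_0 = 1$, one has $\mu_{R_1}(f) = 0$; combining this with $\mu_{R_n}(f) = \nu_{R_{n-1}}(f)$ and $\nu_{R_i}(f) - \mu_{R_i}(f) = m_i$, an easy induction gives $\nu_{R_k}(f) = m_1 + \cdots + m_k$. Inserting the identity $\va a_{\nu_k}\vb = 1/\prod_{i=1}^k R_i^{m_i}$ recalled in the preamble yields the claim.

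For the first formula, I apply Lemma \ref{khatxarel} to write $f(z) = g(z)\prod_{j=1}^{m_k}(z - w_j^k)$ with $g \in \hac$ having no zeros of absolute value $R_k$. Thus $R_k$ is not a critical radius of $g$, so $\va g\vb$ takes a constant value on $C(0, R_k)$. Moreover, since $g \in \hac$, the map $R \mapsto \va g(z)\vb$ (for $z$ with $\va z\vb = R$) is continuous in $R$.

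To identify this constant, I evaluate on a sphere of slightly smaller radius. Take $R$ with $R_{k-1} < R < R_k$ (or $0 < R < R_1$ if $k = 1$) and $\va z\vb = R$. The second formula applied to $f$ gives $\va f(z)\vb = R^{m_1 + \cdots + m_{k-1}}/\prod_{i=1}^{k-1} R_i^{m_i}$, while the ultrametric inequality yields $\va z - w_j^k\vb = R_k$ for every $j$ (since $\va z\vb < R_k = \va w_j^k\vb$). Dividing and letting $R \to R_k^-$ gives
$$\va g(z)\vb = \frac{R_k^{m_1 + \cdots + m_{k-1}}}{\prod_{i=1}^k R_i^{m_i}}$$
for every $z$ on $C(0, R_k)$. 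Multiplying by $\prod_{j=1}^{m_k}\va z - w_j^k\vb$ yields the first formula.

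The plan contains no serious obstacle; the whole argument is a direct assembly of the preamble's material and the factorization of Lemma \ref{khatxarel}. The only minor point to watch is the base case $k = 1$, in which there is no $R_{k-1}$ and the empty products equal $1$; the argument then reduces to $\va g(z)\vb = \va g(0)\vb = 1/R_1^{m_1}$ for $\va z\vb \leq R_1$, in agreement with the stated formula.
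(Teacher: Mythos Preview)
Your proof is correct. The second formula is indeed a direct consequence of the Newton-polygon identities in the preamble, and for the first formula your factorisation via Lemma~\ref{khatxarel} together with the continuity of $R\mapsto M_R(g)$ (equivalently, the fact that $|g(z)|=|b_{n_0}|R^{n_0}$ for a fixed index $n_0$ on the whole range $R_{k-1}<R<R_{k+1}$, since $R_k$ is not a critical radius of $g$) gives the claimed value. The limit is harmless, though you could have avoided it by observing that $|g(z)|$ is a single monomial in $R$ on that interval and simply substituting $R=R_k$.

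The paper proceeds differently: it applies Lemma~\ref{khatxarel} to strip off \emph{all} zeros of absolute value $\le R_k$ at once, writing $f(z)=g(z)\prod_{i=1}^{k}\prod_{j=1}^{m_i}(z-w_j^i)$. The resulting $g$ then has no zeros in $D^-(0,R_{k+1})$, so $|g|$ is constant on that whole disk and equal to $|g(0)|=1/\prod_{i=1}^{k}R_i^{m_i}$, which one reads off directly from $f(0)=1$. Both formulas then follow by multiplying back the explicit linear factors, with no appeal to continuity and no need to establish the second formula first. Your route trades this single bigger factorisation for a bootstrap (second formula $\Rightarrow$ first via a limit); both are short, but the paper's version is slightly cleaner in that it treats the two cases uniformly.
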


\begin{proof}
By Lemma~\ref{khatxarel}, $f (z) = g (z) \prod_{i=1}^k \prod_{j=1}^{m_i} \pl z-   w_j^i \pr$, where $g \in \hac$ has $m_i$ zeros in each
 $C(0, R_i)$ for every $i > k$, and no other zeros. This implies that the critical radii of $g$
 are the $R_i$ for $i > k$ and that $\va g (z) \vb$ is constantly equal to $\va g(0) \vb$ on $D^- \pl 0, R_{k+1} \pr$,
 that is, when $\va z \vb < R_{k+1}$, $$\va g (z) \vb = \va g(0) \vb  = 1/{R_1}^{m_1} \cdots {R_k}^{m_k}.$$
Now, the result follows easily.
\end{proof}

\begin{cor}\label{angiosperma}
Suppose that $f \in \hac$ has no zeros in $D^- \pl z_0, r \pr$, where $0 < r \le \va z_0 \vb$. Then $\va f(z ) \vb = \va f \pl z_0 \pr \vb$
for every $z \in D^- \pl z_0, r \pr$, and $\zeta_{D^+ \pl z_0, r \pr} \pl f \pr = \va f \pl z_0 \pr \vb$.
\end{cor}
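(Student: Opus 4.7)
The plan is to derive both claims from two facts already available in this section: the factorization of $f$ out of its zeros on a fixed circle (Lemma~\ref{khatxarel}), and the fact that $|f|$ is constant on a circle $C(0,\rho)$ whose radius is not critical. First I would note that $|z - z_0| < r$ together with $r \le |z_0|$ forces $|z - z_0| < |z_0|$, so the strict form of the ultrametric inequality gives $|z| = |z_0|$; hence $D^-(z_0, r) \subset C(0, |z_0|)$, and in particular $z_0 \neq 0$. This reduces the problem to analyzing $|f|$ on a single sphere.

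Assuming $f \neq 0$ (the zero case being trivial), let $w_1, \ldots, w_N$ denote the zeros of $f$ on $C(0, |z_0|)$, counted with multiplicities; Lemma~\ref{khatxarel} produces a factorization $f(z) = g(z) \prod_{i=1}^N (z - w_i)$ where $g \in \hac$ has no zeros on $C(0, |z_0|)$. Then $|z_0|$ is not a critical radius of $g$, so by the preliminary remarks of this section $|g|$ is constant on $C(0, |z_0|)$, giving $|g(z)| = |g(z_0)|$ for every $z \in D^-(z_0, r)$. For each $w_i$, the hypothesis that $f$ has no zeros in $D^-(z_0, r)$ forces $|z_0 - w_i| \ge r$; combined with $|z - z_0| < r$, the strict ultrametric inequality applied to $z - w_i = (z - z_0) + (z_0 - w_i)$ yields $|z - w_i| = |z_0 - w_i|$ for every $z \in D^-(z_0, r)$. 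Multiplying these equalities together with $|g(z)| = |g(z_0)|$ gives the first assertion.

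For the second assertion, I would invoke the equality $\zeta_{D^+(z_0, r)} = \zeta_{D^-(z_0, r)}$ recorded in the introduction (based on the density of $|\ka^{\times}|$ in $\er^+$), which reduces the value to $\sup_{z \in D^-(z_0, r)} |f(z)| = |f(z_0)|$ by the first part. I do not anticipate any serious obstacle here; the only bookkeeping point is that the case $N = 0$ (no zeros on the circle) is automatically handled by the empty-product convention, so no separate argument is needed for when $|z_0|$ fails to be a critical radius of $f$.
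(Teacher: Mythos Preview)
Your argument is correct and is essentially the same as what the paper intends: the corollary is placed immediately after Lemma~\ref{sinaltaboces}, whose formula shows that on $C(0,|z_0|)$ the only $z$-dependent part of $|f(z)|$ is $\prod_j |z - w_j^k|$, and your ultrametric step $|z - w_i| = |z_0 - w_i|$ is exactly what makes that product constant on $D^-(z_0,r)$. Your route via Lemma~\ref{khatxarel} plus the non-critical-radius remark simply reproves that piece of Lemma~\ref{sinaltaboces}; the only minor bookkeeping point is that the preliminary remarks are stated for $f(0)=1$, so strictly speaking you should normalize $g$ (factor out a power of $z$ and a constant) before invoking them, but this is harmless.
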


Corollary~\ref{conillosis}  will be very useful.

\begin{cor}\label{conillosis}
Let $\ze$ be a sequence in $\de$ with $ ( \va z_n \vb) $ increasing and converging to $1$, and let $(r_n)$ be a sequence in $(0,1)$ with $D^+ (z_n ,r_n) \subset C(0, \va z_n \vb)$ for all $n$. Given   a nonprincipal ultrafilter $\ef$ in $\en$,
$$\lim_{\ef} \zeta_{D^+ (z_n ,r_n)} (f) = \vc f \vd  \lim_{\ef} \xi_{D^+ (z_n ,r_n)} (f) $$
for every $f \in \hac$.
\end{cor}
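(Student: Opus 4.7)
The plan is to use Lemma~\ref{sinaltaboces} to write, for each $n$, $\zeta_{D^+(z_n,r_n)}(f)$ as $\xi_{D^+(z_n,r_n)}(f)$ times an explicit factor depending only on $f$ and $|z_n|$, and then to show that this factor tends to $\|f\|$ as $n\to\infty$. Since $|z_n|\to 1$ by hypothesis, such pointwise convergence is stronger than convergence along $\ef$, and the identity follows upon taking limits on each side.

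Assume $f\not\equiv 0$. Writing $f(z)=z^{m}h(z)$ with $h(0)\neq 0$ and observing that $|z|^{m}=|z_{n}|^{m}\to 1$ on $D^{+}(z_{n},r_{n})\subset C(0,|z_{n}|)$, I reduce to $f(0)\neq 0$, and rescale so that $f(0)=1$. Let $R_{1}<R_{2}<\cdots$ denote the critical radii of $f$, $m_{i}$ the number of zeros of $f$ on $C(0,R_{i})$, and $\nu_{k}:=m_{1}+\cdots+m_{k}$, so that $|a_{\nu_{k}}|=1/\prod_{i\leq k}R_{i}^{m_{i}}$ and $\|f\|=\lim_{k}|a_{\nu_{k}}|$. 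For each $n$ let $k=k(n)$ be such that $|z_{n}|\in[R_{k},R_{k+1})$ (with $R_{0}:=0$).

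When $|z_{n}|$ is not a critical radius of $f$, Corollary~\ref{angiosperma} yields $\zeta_{D^{+}(z_{n},r_{n})}(f)=|f(z_{n})|$ and the definition of $\xi$ gives $\xi_{D^{+}(z_{n},r_{n})}(f)=1$, so by Lemma~\ref{sinaltaboces} the ratio equals $|z_{n}|^{\nu_{k}}|a_{\nu_{k}}|$. When $|z_{n}|=R_{k}$ is critical, with zeros $w_{1}^{k},\dots,w_{m_{k}}^{k}$ on $C(0,R_{k})$, splitting these zeros into those inside and outside $D^{+}(z_{n},r_{n})$ and applying the ultrametric trichotomy on the disk---together with the fact that the infinite residue field of $\ka$ prevents the finitely many open sub-disks $D^{-}(w_{j}^{k},r_{n})$ from covering $D^{+}(z_{n},r_{n})$, so that the supremum $\sup_{z}\prod_{j}|z-w_{j}^{k}|$ is attained at $r_{n}^{\cer(f,D^{+}(z_{n},r_{n}))}\prod_{|z_{n}-w_{j}^{k}|>r_{n}}|z_{n}-w_{j}^{k}|=\xi_{D^{+}(z_{n},r_{n})}(f)$---Lemma~\ref{sinaltaboces} yields
$$\zeta_{D^{+}(z_{n},r_{n})}(f)=R_{k}^{\nu_{k-1}}|a_{\nu_{k}}|\,\xi_{D^{+}(z_{n},r_{n})}(f),$$
so the ratio equals $R_{k}^{\nu_{k-1}}|a_{\nu_{k}}|$.

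The main obstacle is showing that the ratio tends to $\|f\|$ in both cases. Since $|a_{\nu_{k}}|\uparrow\|f\|$ as $k\to\infty$, it suffices to prove $R_{k(n)}^{\nu_{k(n)-1}},\,|z_{n}|^{\nu_{k(n)}}\to 1$. The boundedness of $f$ amounts to $\sum_{i}m_{i}(1-R_{i})<\infty$, and the monotonicity of $(R_{i})$ gives $(1-R_{k})\nu_{k-1}=\sum_{i<k}m_{i}(1-R_{k})\leq\sum_{i<k}m_{i}(1-R_{i})$, whose right-hand side tends to $0$ by a standard tail estimate (split as $\sum_{i<N}+\sum_{N\leq i<k}$, let $k\to\infty$ for fixed $N$, then $N\to\infty$). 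Via $\log R_{k}\geq-(1-R_{k})/R_{k}$ this gives $R_{k}^{\nu_{k-1}}\to 1$, and similarly $R_{k}^{\nu_{k}}\to 1$, so Case A is completed by the squeeze $R_{k}^{\nu_{k}}\leq|z_{n}|^{\nu_{k}}\leq 1$; the subcase of finitely many critical radii is immediate. Finally, since $\xi_{D^{+}(z_{n},r_{n})}(f)\in(0,1]$, one has $\zeta_{D^{+}(z_{n},r_{n})}(f)-\|f\|\,\xi_{D^{+}(z_{n},r_{n})}(f)\to 0$, and taking the limit along $\ef$ on each term yields the claimed identity.
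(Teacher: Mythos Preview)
Your proof is correct and follows essentially the same route as the paper's: reduce to $f(0)=1$, use Lemma~\ref{sinaltaboces} to write $\zeta_{D^+(z_n,r_n)}(f)/\xi_{D^+(z_n,r_n)}(f)$ as $|z_n|^{\nu_k}|a_{\nu_k}|$ or $R_k^{\nu_{k-1}}|a_{\nu_k}|$ according as $|z_n|$ is noncritical or critical, and then show this factor tends to $\|f\|$. The only cosmetic difference is that the paper indexes by the full sequence of zeros $(w_k)$ and quotes the classical fact ``$(a_n)$ decreasing, $\sum a_n<\infty\Rightarrow na_n\to 0$'' to get $|w_{k_n}|^{k_n}\to 1$, whereas you work with the critical radii and carry out the equivalent tail estimate $(1-R_k)\nu_{k-1}\to 0$ by hand; your extra remark on why the supremum over $D^+(z_n,r_n)$ of $\prod_j|z-w_j^k|$ equals $\xi$ is a detail the paper leaves implicit. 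One wording quibble: after the inequality $(1-R_k)\nu_{k-1}\le\sum_{i<k}m_i(1-R_i)$ you say ``whose right-hand side tends to $0$'', but that right-hand side tends to the full (positive) sum; what you actually use, and correctly describe in the parenthesis, is the refined bound $(1-R_k)\nu_{k-1}\le (1-R_k)\sum_{i<N}m_i+\sum_{i\ge N}m_i(1-R_i)$.
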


\begin{proof}
Since each $\xi_{D^+ (z_n ,r_n)}$ is multiplicative,  it is enough to prove it for $f \in \hac$  with $f(0) =1$. Also, the result is obvious if  $f$ has a finite number of zeros in $\de$, so we assume that    the sequence  $(w_k)$ of its zeros satisfies that $(\va w_k \vb)$ is increasing and convergent to $1$.

For each $n \in \en$, take $k_n$ as the  largest $k$ with $\va w_k \vb \le \va z_n \vb$. If $\va w_{k_n} \vb < \va z_n \vb$, then
$\xi_{D^+ (z_n ,r_n)} (f) = 1$ and, by Lemma~\ref{sinaltaboces},
$$   \zeta_{D^+ (z_n ,r_n)} (f) = \frac{\va z_n \vb^{k_n}}{\prod_{i=1}^{k_n} \va w_i \vb} = \frac{\va z_n \vb^{k_n}}{\prod_{i=1}^{k_n} \va w_i \vb} \ \xi_{D^+ (z_n ,r_n)} (f) .$$
Similarly, if $\va w_{k_n} \vb = \va z_n \vb$ and   $M_{k_n} = \ca  \tl m : \va w_m \vb = \va w_{k_n} \vb \tr $, then
$$ \zeta_{D^+ (z_n ,r_n)} (f) = \frac{1}{\va z_n \vb^{M_{k_n}}} \frac{\va z_n \vb^{k_n}}{\prod_{i=1}^{k_n} \va w_i \vb} \ \xi_{D^+ (z_n ,r_n)} (f) . $$

Now, recall that, if $(a_n)$ is a decreasing sequence in $\er$ with $\sum_{n=1}^{\infty} a_n < + \infty$, then  $\lim_{n \ra \infty} n a_n =0$. Equivalently, since $\pl \va w_k \vb \pr$ is increasing and $\prod_{k=1}^{\infty} \va w_k \vb >0$,
$\lim_{n \ra \infty} \va w_{k_n} \vb^{k_n} =1$, so $\lim_{n \ra \infty} \va  z_n \vb^{k_n} =1$ and, consequently, $\lim_{n \ra \infty} \va z_n  \vb^{M_{k_n}} =1$.
On the other hand, since $\vc f \vd = 1 / \prod_{k=1}^{\infty} \va w_k \vb$, we easily conclude the result.
 \end{proof}

We give a final lemma that will be used later.

 \begin{lem}\label{correicion}
Let $z \in \de$, $z \neq 0$, and suppose that  $0 < s < r < \va z \vb$.
If $f \in \hac$, then
$$
  \pl \frac{s}{r}\pr^{\cer \pl f , D^- \pl z, r \pr \pr }
 \xi_{D^+ \pl z , r \pr } (f) \le \xi_{D^+ \pl z , s \pr} (f) \le \xi_{D^+ \pl z  , r \pr } (f).
$$
\end{lem}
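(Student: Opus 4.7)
The plan is to reduce the inequality to a straightforward counting argument on the zeros of $f$ lying on the circle $C(0, \va z \vb)$. The key observation is that, in the nonarchimedean setting, the condition $s < r < \va z \vb$ guarantees that $D^+ (z, s) \subset D^- (z, r) \subset D^+ (z, r) \subset C(0, \va z \vb)$, so the definition of $\xi$ applies to both disks and is controlled only by the zeros on this circle and their distances to $z$.

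First I would dispose of the trivial case: if $\cer(f, C(0, \va z \vb )) = 0$ then $\xi_{D^+ \pl z, r \pr}(f) = \xi_{D^+ \pl z, s \pr}(f) = 1$ and $\cer(f, D^- \pl z, r \pr) = 0$, so all three quantities in the inequality equal $1$. Otherwise, let $w_1, \ldots, w_n$ denote the zeros of $f$ on $C(0, \va z \vb)$, and set
$$a := \ca \tl i : \va z - w_i \vb \le s \tr, \quad b' := \ca \tl i : \va z - w_i \vb < r \tr, \quad b := \ca \tl i : \va z - w_i \vb \le r \tr,$$
so that $a \le b' \le b$, and note that $b' = \cer \pl f, D^- \pl z, r \pr \pr$. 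Using the definition of $\xi$ and splitting the product over the zeros with $s < \va z - w_i \vb \le r$ into those with $\va z - w_i \vb = r$ (there are $b - b'$ of them, each contributing a factor $r$) and those with $s < \va z - w_i \vb < r$, I compute directly
$$\frac{\xi_{D^+ \pl z, s \pr}(f)}{\xi_{D^+ \pl z, r \pr}(f)} = \frac{s^a}{r^{b'}} \prod_{s < \va z - w_i \vb < r} \va z - w_i \vb.$$

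Now the two inequalities follow by bounding each factor in the product: each $\va z - w_i \vb$ with $s < \va z - w_i \vb < r$ lies in $(s, r)$, hence is at most $r$ and strictly greater than $s$. The upper bound gives
$$\frac{\xi_{D^+ \pl z, s \pr}(f)}{\xi_{D^+ \pl z, r \pr}(f)} \le \frac{s^a \, r^{b' - a}}{r^{b'}} = \pl \frac{s}{r} \pr^a \le 1,$$
which is the right-hand inequality. The lower bound gives
$$\frac{\xi_{D^+ \pl z, s \pr}(f)}{\xi_{D^+ \pl z, r \pr}(f)} \ge \frac{s^a \, s^{b' - a}}{r^{b'}} = \pl \frac{s}{r} \pr^{b'},$$
which is the left-hand inequality (with the convention that an empty product equals $1$, so the case $b' = a$ is covered as an equality).

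The argument is essentially bookkeeping, so no step is genuinely hard; the only subtlety to watch is the separation of zeros on $C(0, \va z \vb)$ according to which of the three concentric disks $D^+ \pl z, s \pr$, $D^- \pl z, r \pr$, $D^+ \pl z, r \pr$ contains them, and in particular the role of zeros at exact distance $r$ from $z$, which is what forces the exponent $b' = \cer \pl f, D^- \pl z, r \pr \pr$ rather than $\cer \pl f, D^+ \pl z, r \pr \pr$ on the left-hand side.
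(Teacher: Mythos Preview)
Your proof is correct and follows essentially the same approach as the paper: split the zeros of $f$ on $C(0,\va z\vb)$ according to their distance to $z$ and bound each factor. The only cosmetic difference is that you compute the ratio $\xi_{D^+(z,s)}(f)/\xi_{D^+(z,r)}(f)$ and bound it above and below, whereas the paper declares the right-hand inequality ``clear'' and establishes the left-hand one by a direct chain of inequalities on $\xi_{D^+(z,s)}(f)$; your treatment of the zeros at exact distance $r$ is in fact slightly more explicit than the paper's.
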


\begin{proof}
It is clear that $\xi_{D^+ \pl z , s \pr} (f) \le \xi_{D^+ \pl z  , r \pr } (f)$. On the other hand, if $w_1, \ldots, w_n$
are the zeros of $f$ in $D^- \pl z , r \pr \setminus D^+ \pl z , s \pr$, and $z_1, \ldots, z_m$ are the zeros of
$f$ in $C(0, \va z \vb ) \setminus D^- \pl z , r \pr$, then
\begin{eqnarray*}
\xi_{D^+ \pl z  , s \pr} (f)
 &=&  {s}^{\cer \pl f , D^+ \pl z, s \pr \pr }
\prod_{i=1}^{n} \va  z - w_i \vb
 \prod_{j=1}^m \va z - z_j \vb  \\
 &\ge&  \pl \frac{s}{r}\pr^{\cer \pl f , D^- \pl z, r \pr \pr } {r}^{\cer \pl f , D^- \pl z, r \pr \pr}  \prod_{j=1}^m \va z - z_j \vb \\
 &=&  \pl \frac{s}{r}\pr^{\cer \pl f , D^- \pl z, r \pr \pr } \xi_{D^+ \pl z  , r \pr } (f) ,
 \end{eqnarray*}
and we are done.
\end{proof}

\section{$\ma$ and $\mathscr{M}^*$}\label{mieko}

Proposition~\ref{josejrut} is given in \cite{E3}. For the sake of completeness, we provide a (different) proof.

\begin{prop}\label{josejrut}
Suppose that $\varphi \in \ma$  satisfies  $\varphi  = \psi \in \mathscr{M}^*$ on $\ka [z]$. Then $\varphi  = \mathbf{i} \pl \psi \pr$.
\end{prop}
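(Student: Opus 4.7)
The plan is a Taylor-truncation argument. I would locate a center $a \in \de$ with $\psi(z-a) < 1$, approximate each $f \in \hac$ by a polynomial obtained by truncating its expansion around $a$, and use the contracting factor $(z-a)^{N+1}$ in the remainder to transfer the equality from $\ka[z]$ to all of $\hac$.

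First, producing a center: since $\psi \neq \vc \cdot \vd$ is a point of the Berkovich unit disk $\mathscr{M} \pl T_1 \pr$, the structural description of this space (see \cite[1.4.4]{B}) provides a disk $D^+ \pl a, r \pr \subsetneq D^+ \pl 0, 1 \pr$ with $r < 1$ such that $\psi \le \zeta_{D^+ \pl a, r \pr}$ on $T_1$; in the main case $\va a \vb < 1$, so $a \in \de$ and $\psi(z-a) \le r < 1$. Set $\rho := \psi(z-a)$. Then for $f \in \hac$ I would expand $f$ around $a$,
$$f(z) = \sum_{n \ge 0} b_n (z-a)^n, \qquad \va b_n \vb \le \vc f \vd ,$$
and write $f = f_N + R_N$ with $f_N := \sum_{n=0}^{N} b_n (z-a)^n \in \ka[z]$ and $R_N = (z-a)^{N+1} g_N$, where $g_N \in \hac$ with $\vc g_N \vd \le \vc f \vd$. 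The hypothesis gives $\varphi(f_N) = \psi(f_N) = \mathbf{i}(\psi)(f_N)$ for every $N$ and $\varphi(z-a) = \psi(z-a) = \rho = \mathbf{i}(\psi)(z-a)$, so by multiplicativity
$$\varphi(R_N) = \rho^{N+1} \varphi(g_N) \le \rho^{N+1} \vc f \vd , \qquad \mathbf{i}(\psi)(R_N) \le \rho^{N+1} \vc f \vd ,$$
both of which vanish as $N \to \infty$ since $\rho < 1$. The ultrametric inequality then forces $\varphi(f) = \lim_N \varphi(f_N) = \lim_N \mathbf{i}(\psi)(f_N) = \mathbf{i}(\psi)(f)$, completing the argument.

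The main obstacle is the boundary situation in which the only available representing disk has $\va a \vb = 1$, so $a \notin \de$. There every $b \in \de$ satisfies $\va a - b \vb = 1$ and hence $\psi(z-b) = 1$, so no choice of center in $\de$ yields a contracting factor and the truncation argument above breaks down. For such $\psi$ one must either identify $\mathbf{i}(\psi)$ via a different limit process (for instance via concentric disks $D^+ \pl 0, s \pr$ as $s \to 1^-$), or replace the Taylor decomposition by a factorization of $f \in \hac$ into a polynomial carrying the zeros of $f$ near the support of $\psi$ times a nowhere-vanishing factor on a suitable disk, on which $\varphi$ and $\mathbf{i}(\psi)$ can be compared directly via a rigidity result such as Corollary~\ref{angiosperma}.
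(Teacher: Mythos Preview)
Your Taylor-truncation argument is correct and considerably more direct than the paper's proof, but the ``obstacle'' you flag is illusory. If $\va a \vb = 1$, then $\va w - a \vb = 1$ for every $w \in \de$, so $z - a$ has constant absolute value $1$ on $\de$; hence $z - a$ is a unit in $\hac$ with $\vc z - a \vd = \vc (z-a)^{-1} \vd = 1$, and multiplicativity forces $\varphi(z-a) = 1$ for \emph{every} $\varphi \in \ma$. Thus the restriction $\psi$ of any $\varphi \in \ma$ to $\ka[z]$ satisfies $\psi(z-a) = 1$ whenever $\va a \vb = 1$. Since $\psi \neq \vc\cdot\vd$, there is some $a$ with $\psi(z-a) < 1$, and by the above $\va a \vb < 1$; then $\psi(z) \le \max\{\psi(z-a), \va a \vb\} < 1$, so you may simply take $a = 0$. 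The paper invokes exactly this fact in the line ``Since $\psi \neq \vc\ \vd$, we can find $r \in (0,1)$ with $\psi \le \zeta_{D^+(0,r)}$'' without spelling it out.

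With that settled, your argument is complete and genuinely different from the paper's. The paper factors $f = P_R f_R$ where $P_R$ collects the zeros of $f$ in $D^+(0,R)$, establishes the one-sided inequality $\varphi \le \mathbf{i}(\psi)$ on functions with constant modulus on $D^+(0,r)$, and then rules out strict inequality by a contradiction involving $h = f - 1$. Your route---truncate the Taylor series about a center where $\psi$ contracts, bound the remainder by $\rho^{N+1}\vc f\vd$, and pass to the limit---avoids the zero-factorization and the two-step inequality entirely. It is shorter, needs nothing beyond the ultrametric inequality and the seminorm bound, and would make a cleaner replacement for the paper's proof. The paper's factorization viewpoint is closer in spirit to the later analysis of $\xi_{D^+(z,r)}$ and zero counts, but for this particular proposition that machinery buys nothing extra.
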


\begin{proof}
To see that $\varphi  =\mathbf{i} \pl \psi \pr$, it is enough to prove the equality at any  $f \in \hac$
satisfying $f(0) =1$ and having infinitely many critical
radii $R_j$. Since $\psi \neq \vc \ \vd $, we can find $r \in (0,1)$ with $\psi \le  \zeta_{D^+ (0, r)}$, and
  we may assume that
$f$ has  $m_j$ zeros in each $C(0, R_j)$, and  that $r < R_1 < R_2 < \cdots $. For each $R \in (R_1, 1)$, we write $f = P_R f_R$, where $P_R \in \ka [z]$ is the product
 $P_R (z) := \prod_{i=1}^n \pl z - z_i \pr$, being the $z_i$ all the zeros of $f$ in $D^+ (0, R)$,
 and $f_R \in \hac$  has no zeros in $D^+ (0, R) $.

\medskip

{\bf Claim.} {\em The limit $\ql \varphi  \qr (f) := \lim_{R \ra 1} \varphi   (f_R) $ exists, and
$$\varphi  (f) = \frac{\mathbf{i} \pl \psi \pr (f)}{\vc f \vd } \ql \varphi \qr (f).$$}

 \medskip

For $R \in (R_1, 1)$ fixed, let $N$ be the largest integer with $R_N \le R$, so that $R_1, \ldots, R_N$ are the critical radii of $P_R$.
Obviously $\va P_R \vb$ and $\va f \vb$ are constant in  $D^+ (0, r)$, so
 $  \psi \pl P_R \pr =  \va P_R (0) \vb =     \prod_{j=1}^{N} {R_j}^{m_j} $, and
  $\mathbf{i} \pl \psi \pr (f)  = \va f(0) \vb =1$.  Since $\vc f \vd  = 1/ \prod_{n=1}^{\infty} {R_n}^{m_n}$,    $\lim_{R \ra 1}  \psi  (P_R) \vc f \vd  =1 =\mathbf{i} \pl \psi \pr (f)$. 
Also $\varphi(f) = \psi (P_R) \varphi (f_R) $ for all $R$, so  by taking limits we prove the claim.
$\blacksquare$

\smallskip

Also, since $\vc P_R \vd =1$,   $\vc f_R \vd  = \vc f \vd $ for all $R$, and consequently $\ql \varphi \qr (f) \le \vc f \vd $ and   $\varphi (f) \le \mathbf{i} \pl \psi \pr (f)$. We easily conclude that $\varphi (g) \le \mathbf{i} \pl \psi \pr (g)$ whenever $g \in \hac$  has constant absolute value on $D^+(0,r)$.

Suppose next that $\varphi (f) < \mathbf{i} \pl \psi \pr (f)$, that is,
 $\varphi (f) < 1$. Note that $f (z) := 1 + \sum_{n=1}^{\infty} a_n z^n$ and, since there are no critical radii $R\le r$,
 $M:= \sup_{n \in \en} a_n r^n < 1$, so the function $h (z) := f(z) -1$
 satisfies $\va h(z) \vb \le M$ for all $z \in D^+ (0, r)$ and  $\mathbf{i} \pl \psi \pr (h) \le \zeta_{D^+ (0, r)} (h)  <1$.
We can write $h = P g$, where $P \in \ka [z]$ and $g \in \hac$ has constant absolute value
 in   $D^+(0,r)$, which implies that $\varphi (g) \le \mathbf{i} \pl \psi \pr (g)$. Obviously,  $ \psi (P) \mathbf{i} \pl \psi \pr (g)  = \mathbf{i} \pl \psi \pr (h) <1$, whereas  $ \psi (P) \varphi (g)  = \varphi (h) =1$ (because $\varphi (f) <1$ and $\varphi (1) =1$), implying that
 $\mathbf{i} \pl \psi \pr (g) < \varphi (g)$. Since this is impossible, we conclude that $\varphi (f) = \mathbf{i} \pl \psi \pr (f)$.
             \end{proof}

\begin{proof}[Proof of Theorem~\ref{casitadechocolate}]
It is obvious that $\mathbf{i}$ is injective and that $\mathbf{i}^{-1} :  \mathbf{i} \pl \mathscr{M}^* \pr \ra \mathscr{M}^* $ is continuous. Next, suppose that
 $\pl \zeta_{\lambda } \pr_{\lambda \in \Lambda}$ is a net in $\mathscr{M}^*$ convergent to $\zeta_{\lambda_0} \in \mathscr{M}^*$. By the definition of convergence of a net, since $\zeta_{\lambda_0} \neq \vc \ \vd$, there exist  $r \in (0,1)$ and $\lambda_1 \in \Lambda$  such that  $\zeta_{\lambda} \le \zeta_{D^+ (0, r)}$ for all $\lambda \ge \lambda_1$, and $\zeta_{\lambda_0} \le \zeta_{D^+ \pl 0, r \pr}$. This implies in particular that, for $g \in \hac$, if $\va g \vb$ is constant in $D^+ (0,r)$, then $\mathbf{i} \pl \zeta_{\lambda_0} \pr (g) = \va g (0) \vb = \mathbf{i} \pl \zeta_{\lambda} \pr (g)$ for all $\lambda \ge \lambda_1$.

Now consider  $f \in \hac$. Obviously $f = P g$ where $P$ is a polynomial with all its zeros in $D^+ (0, r)$ and $g \in \hac$ has no zeros in $D^+ (0,r)$. Then, taking into account that $P \in \ka [z]$, for $\lambda \ge \lambda_1$ and $\lambda = \lambda_0$,
$\mathbf{i} \pl \zeta_{\lambda} \pr (f) =  \zeta_{\lambda}   (P) \va g (0) \vb$. Consequently  $\pl \mathbf{i} \pl \zeta_{\lambda } \pr (f) \pr_{\lambda \in \Lambda}$ converges to $\mathbf{i} \pl \zeta_{\lambda_0 } \pr (f)$. The fact that $\mathbf{i}$ is continuous follows easily.

We next see that $\mathbf{i} \pl \mathscr{M}^* \pr$ is open in $\ma$.
Given $\varphi \in \mathscr{M}^*$, there exists  $r<1$ such that $\varphi \le \zeta_{D^+ \pl 0, r \pr}$ and a polynomial $P \in \ka [z]$
       with all   its zeros in $D^+ \pl 0, r \pr$ such that $\zeta_{D^+ \pl 0, r \pr} (P) < \vc P \vd /2$. Now if $\psi \in \ma$
       satisfies $\va \psi (P) - \mathbf{i} (\varphi) (P) \vb < \vc P \vd /2$, then $\psi (P) < \vc P \vd $, so the restriction of $\psi$ to $\ka [z]$ is not equal to $\vc \ \vd $. By Proposition~\ref{josejrut}, $\psi$ belongs to $\mathbf{i} \pl \mathscr{M}^* \pr$.

       Finally, the map $T: \ma \ra \mathscr{M} (T_1)$ that coincides with $\mathbf{i}^{-1}$ on $\mathbf{i} \pl \mathscr{M}^* \pr$ and sends $\ma \setminus  \mathbf{i} \pl \mathscr{M}^* \pr$ to $\vc \ \vd$ is easily seen to be continuous and closed. The result now follows from \cite[Proposition  2.4.3]{E}.
\end{proof}

\section{Sequences of zeros}\label{gatopo}

It is well known that, in complex analysis,  under some natural conditions, a bounded analytic function can be constructed
  to have zeros precisely at a given sequence $(z_n)$ of complex numbers in the open unit disk, each with a prescribed multiplicity
   (see \cite[Theorem II.2.2]{G}). A similar result does not hold for nonarchimedean fields, in particular when they are not spherically complete, as it is the case
   of the $p$-adic complex fields $\ce_p$ (see \cite{L}).
    Nevertheless, in the nonarchimedean context, an  analytic function (not necessarily bounded) can be found having as zeros the points of the   sequence $(z_n)$ when it satisfies a natural condition, but with multiplicities larger (and not necessarily equal) than those prescribed (see \cite{FM}, and \cite[Theorem 25.5]{E1} for a detailed proof).

   Roughly speaking, here  we are interested in finding $f \in \hac $ having zeros not at points of a given sequence $(z_n)$, but {\em close} to them, and paying
    attention instead to the   the fact that  any of those zeros is simple and that $\vc f \vd$ belongs to $\va \ka^{\times} \vb$.

We begin with a  well known  result  (see for instance \cite[p. 15]{Sh}).

\begin{lem}\label{tiadelphi}
Let $\gamma_1, \ldots, \gamma_n \in \ka$ be pairwise different. Then the rank of the Vandermonde matrix
\[ \left(
\begin{array}{lllll}
1 & \gamma_1 &  {\gamma_1}^2 & \ldots &{\gamma_1}^{n-1} \\
1 & \gamma_2 &  {\gamma_2}^2 & \ldots &{\gamma_2}^{n-1} \\
1 & \gamma_3 &  {\gamma_3}^2 & \ldots &{\gamma_3}^{n-1}\\
\vdots & \vdots &\vdots &\ddots &\vdots \\
1 & \gamma_n &  {\gamma_n}^2 & \ldots &{\gamma_n}^{n-1}
\end{array}
\right) \]
is $n$.
\end{lem}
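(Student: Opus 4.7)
The plan is to prove this via the standard polynomial argument, since it avoids having to compute the Vandermonde determinant explicitly. First I would observe that showing the matrix has rank $n$ is equivalent to showing its rows (or columns) are linearly independent over $\ka$. So I would suppose, for contradiction, that there exist scalars $c_0, c_1, \ldots, c_{n-1} \in \ka$, not all zero, such that a linear combination of the columns gives zero; that is, $\sum_{j=0}^{n-1} c_j \gamma_i^{\,j} = 0$ for every $i = 1, \ldots, n$.

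Next I would introduce the polynomial $p(x) := c_0 + c_1 x + \cdots + c_{n-1} x^{n-1} \in \ka[x]$, which has degree at most $n-1$ and is not the zero polynomial. The displayed identities say precisely that $p(\gamma_i) = 0$ for each $i = 1, \ldots, n$. Since $\gamma_1, \ldots, \gamma_n$ are pairwise distinct, $p$ has $n$ distinct roots in $\ka$. But a nonzero polynomial over a field has at most as many roots as its degree, giving $n \le n - 1$, a contradiction. Hence the columns are linearly independent and the matrix has rank $n$.

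The argument uses nothing beyond the basic fact that a nonzero polynomial of degree $d$ over a field has at most $d$ roots, so there is no real obstacle; the nonarchimedean/completeness structure of $\ka$ plays no role whatsoever, only that $\ka$ is a field. An alternative would be to compute the Vandermonde determinant as $\prod_{i<j}(\gamma_j - \gamma_i)$ by induction and observe that it is nonzero under the hypothesis, but the polynomial argument is shorter and directly yields the rank statement.
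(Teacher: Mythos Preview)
Your argument is correct and entirely standard. The paper does not actually prove this lemma at all; it simply states it as a well-known result and cites a linear algebra textbook, so there is nothing to compare against beyond noting that your polynomial-root argument is one of the usual proofs.
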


Next, and throughout this section, we use the notation and basic properties of critical radii and  zeros of analytic functions given at the beginning of Section~\ref{complauton}.

\begin{lem}\label{branu}
Let $P (z) := a_0 + a_1 z+ \cdots +  z^n \in \ka [z]$. If $P(z) = \prod_{i=1}^n (z - z_i)$ with $z_1, \ldots, z_n \in \de$, then
$\va a_i \vb \le 1$ for all $i \in \tl 0, \ldots, n -1 \tr$.
\end{lem}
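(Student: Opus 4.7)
The plan is to use the multiplicativity of the Gauss norm $\vc \cdot \vd$ restricted to $\ka[z]$. First I would observe that for each linear factor $z - z_i$, since $\va z_i \vb < 1$, the Gauss norm satisfies $\vc z - z_i \vd = \max(1, \va z_i \vb) = 1$. Then, invoking the standard fact that the Gauss norm is multiplicative on $\ka[z]$, I conclude
$$\vc P \vd = \prod_{i=1}^{n} \vc z - z_i \vd = 1.$$
Since $\vc P \vd = \max_{0 \le i \le n} \va a_i \vb$, this immediately gives $\va a_i \vb \le 1$ for all $i \in \tl 0, \ldots, n-1 \tr$ (with equality $\va a_n \vb = 1$ for the leading term, as required for the maximum to be attained).

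Alternatively, and avoiding any appeal to multiplicativity, I would write $a_{n-k} = (-1)^k e_k (z_1, \ldots, z_n)$, where $e_k$ is the $k$-th elementary symmetric polynomial. Each monomial $z_{i_1} \cdots z_{i_k}$ appearing in $e_k$ has absolute value strictly less than $1$, so by the ultrametric inequality applied to this finite sum, $\va a_{n-k} \vb < 1$ for every $k \in \tl 1, \ldots, n \tr$, which in particular yields $\va a_i \vb \le 1$ for $i \in \tl 0, \ldots, n-1 \tr$.

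I do not foresee any real obstacle here; the statement is a direct consequence of the ultrametric nature of the absolute value combined with the hypothesis $z_i \in \de$. Either approach fits within a couple of lines.
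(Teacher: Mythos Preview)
Your proposal is correct; both arguments you give are valid and standard. The paper states this lemma without proof, treating it as elementary, so either of your approaches (multiplicativity of the Gauss norm, or the direct estimate via elementary symmetric polynomials and the ultrametric inequality) is entirely adequate.
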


\begin{lem}\label{korralasign}
Let $P_1 (z) , Q(z)  \in \ka [z]$, where the degree of $P_1 (z)$ is $n>0$, and let
$$ P_2 (z) := P_1 (z) + z^{n +1} Q (z) .$$
Suppose that
$R_1$ is a critical radius of $P_2(z)$ satisfying $\mu_{R_1} (P_2) > n$ and that $C (0, R_1)$  contains exactly $k$ zeros of $P_2 (z)$, $k>0$. Then
it also contains exactly $k$ zeros of $Q(z)$.
\end{lem}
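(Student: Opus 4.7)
The plan is to apply the Newton-polygon zero-counting formula $\cer(f, C(0, r)) = \nu_r(f) - \mu_r(f)$ recalled at the beginning of Section~\ref{complauton} to both $P_2$ and $Q$, and to check that the two right-hand sides coincide at $r = R_1$. The hypothesis $\mu_{R_1}(P_2) > n$ is there precisely so that the critical behaviour of $P_2$ at the radius $R_1$ is controlled entirely by the tail $z^{n+1}Q(z)$: it forces every coefficient of $P_2$ of index $\le n$, i.e.\ every coefficient coming from $P_1$, to be strictly dominated in size at $R_1$.

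More concretely, I would write $P_1 (z) = \sum_{i=0}^n b_i z^i$ and $Q(z) = \sum_{j=0}^d c_j z^j$, so that the coefficients of $P_2$ are $b_i$ at index $i \le n$ and $c_{i-n-1}$ at index $i \ge n+1$. The assumption $\mu_{R_1}(P_2) > n$ amounts to $|b_i|\, R_1^i < M_{R_1}(P_2)$ for every $i \le n$. Hence the set of indices realising $M_{R_1}(P_2)$ lies entirely in $\{n+1, n+2, \ldots\}$, which gives
$$M_{R_1}(P_2) = M_{R_1}(z^{n+1}Q), \quad \nu_{R_1}(P_2) = \nu_{R_1}(z^{n+1}Q), \quad \mu_{R_1}(P_2) = \mu_{R_1}(z^{n+1}Q).$$
A trivial index shift yields $\nu_{R_1}(z^{n+1}Q) = \nu_{R_1}(Q) + (n+1)$ and $\mu_{R_1}(z^{n+1}Q) = \mu_{R_1}(Q) + (n+1)$, whence
$$\nu_{R_1}(Q) - \mu_{R_1}(Q) = \nu_{R_1}(P_2) - \mu_{R_1}(P_2) = k.$$
Since $R_1 > 0$, the zeros of $z^{n+1}Q$ in $C(0, R_1)$ are exactly the zeros of $Q$ in $C(0, R_1)$, so this common value counts the zeros of $Q$ in $C(0, R_1)$ as required.

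I do not expect any real obstacle. The only minor point worth verifying is that the zero-counting formula of Section~\ref{complauton}, stated there for series normalized by $f(0) = 1$, is being applied to $Q$, which need not even have nonzero constant term; however, the argument depends only on the Newton polygon (equivalently, only on the ratios of nonzero coefficients), and factoring out a possible power of $z$ does not affect the zeros located in $C(0, R_1)$, so the formula $\cer(Q, C(0, R_1)) = \nu_{R_1}(Q) - \mu_{R_1}(Q)$ is safe to invoke.
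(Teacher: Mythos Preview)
Your argument is correct and is essentially the same as the paper's: both identify that the hypothesis $\mu_{R_1}(P_2)>n$ forces the indices realising $M_{R_1}(P_2)$ to lie in $\{n+1,\ldots\}$, then shift by $n+1$ to obtain $\mu_{R_1}(Q)=\mu_{R_1}(P_2)-(n+1)$ and $\nu_{R_1}(Q)=\nu_{R_1}(P_2)-(n+1)$, whence $\nu_{R_1}(Q)-\mu_{R_1}(Q)=k$. Your closing remark about the normalisation $f(0)=1$ being inessential is a reasonable extra observation, but otherwise the two proofs coincide.
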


\begin{proof}
We write
$P_1(z) := a_0+ a_1 z + \ldots + a_n z^n $ and $ Q (z) := a_{n+1} + a_{n+2} z + \ldots + a_{n+m+1} z^m $, so that $P_2(z) = \sum_{i=0}^{n+m+1} a_i z^i$.
By definition,
$\va a_i \vb {R_1}^i < M_{R_1} (P_2)  $
for all   $i \notin \tl \mu_{R_1} (P_2), \ldots, \nu_{R_1} (P_2) \tr $.
Also
 $$M_{R_1} (P_2) = \va a_{\mu_{R_1} (P_2) } \vb {R_1}^{\mu_{R_1} (P_2)} =  \va a_{\nu_{R_1} ( P_2)  } \vb {R_1}^{\nu_{R_1} ( P_2)},$$
 and $\va a_i \vb {R_1}^{i}  \le  M_{R_1} (P_2)  $ for $i \in \tl \mu_{R_1} (P_2), \ldots, \nu_{R_1} (P_2) \tr $.
Taking into account that $\mu_{R_1} (P_2)  \ge   n+1$,  we easily see that $ \mu_{R_1} (Q) = \mu_{R_1} ( P_2) - n - 1$ and $\nu_{R_1} (Q) = \nu_{R_1} (P_2) - n - 1$. Since,
$\nu_{R_1} (P_2) = k + \mu_{R_1} (P_2)$, the conclusion follows easily.
\end{proof}

\begin{lem}\label{markapasos}
Let $M_1, M_2, M_3 \in \en$. Let
$P_1 (z) = p_1 (z) q_1 (z) = 1 + \sum_{n=1}^{M_1 + M_2} a_n z^n$,
where $p_1 (z) , q_1 (z) \in \ka [z]$
have
degrees $M_1$ and $M_2 $, respectively.
Let $A_1$ and $A_2$ be the sets of zeros of $p_1 (z)$ and $q_1 (z)$,
respectively, and suppose that each zero of $q_1 (z)$ is simple and
$\max_{z \in A_1} \va z \vb < \min_{z \in A_2}  \va z \vb$.

Suppose that $S \in \va \ka^{\times} \vb$ and that $A_3 \subset \ka$ has $M_3$ points and satisfy
 $\max_{z \in A_2} \va z \vb < S < \min_{z \in A_3}  \va z \vb$.

Then there exists $Q (z) \in \ka [z]$ of degree $M_2 + M_3 $ such that
$P_2 (z) := P_1 (z) + z^{M_1 + M_2 +1} Q (z)$ can be written as
$$P_2 (z) = p_2 (z)  q_2 (z) ,$$
with $p_2 (z) = r_2 (z) s_2 (z)$, where $M_1$, $M_2 +1$ and $M_2 + M_3$ are the degrees   of $r_2 (z)$, $s_2 (z)$ and $q_2 (z)$, respectively, and

\begin{itemize}
\item each $z \in A_2 \cup A_3$ is a simple zero of $q_2 (z)$;
\item $r_2 (z)$ has the same critical radii as $p_1 (z)$, and the same number of zeros in each critical radius;
 \item  all $M_2 +1$ zeros of $s_2 (z)$ are contained in $C(0, S)$.
\end{itemize}
\end{lem}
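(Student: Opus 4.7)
The plan is to construct $P_2$ in factored form, from which $Q$ can be read off directly. After normalizing so $p_1(0)=q_1(0)=1$, set $q_3(z):=\prod_{w\in A_3}(1-z/w)$ and consider the formal power series $p_1(z)q_3(z)^{-1}$. Let $G(z)$ be its truncation modulo $z^{M_1+M_2+1}$, a polynomial of degree at most $M_1+M_2$; by construction, $Gq_3=p_1+z^{M_1+M_2+1}E$ for some $E\in\ka[z]$ of degree at most $M_3-1$.

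The crux is a coefficient bound for $G$. Because each factor $1-z/w$ has absolute value $1$ on $\va z\vb=R<\min_{w\in A_3}\va w\vb$, the ultrametric identity $\va q_3(z)\vb=1$ holds there, and the nonarchimedean Cauchy inequality forces the coefficients of $q_3^{-1}$ to satisfy $\va c_n\vb\le(\min\va w\vb)^{-n}$. Combined with $\va p_1(z)\vb=(\min\va w\vb)^{M_1}/\prod_{z\in A_1}\va z\vb$ on the circle $\va z\vb=\min\va w\vb$, this yields $\va g_i\vb\le(\prod_{z\in A_1}\va z\vb)^{-1}(\min\va w\vb)^{M_1-i}$ for $i\ge M_1$, with equality at $i=M_1$. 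Moreover, since every slope of $p_1$'s Newton polygon exceeds $-\log S$ while every slope of the polygon of $q_3^{-1}$ is bounded above by $-\log\min\va w\vb<-\log S$, the first $M_1$ slopes of the Newton polygon of $G$ coincide with those of $p_1$.

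Next I would choose $c\in\ka^\times$ with $\va c\vb=(\prod_{z\in A_1}\va z\vb\cdot S^{M_2+1})^{-1}$, which is possible since $S\in\va\ka^\times\vb$, and set $P_2^*(z):=G(z)+cz^{M_1+M_2+1}$. The segment from $(M_1,\log\va g_{M_1}\vb)$ to $(M_1+M_2+1,\log\va c\vb)$ has slope exactly $-\log S$; the intermediate points $(M_1+k,\log\va g_{M_1+k}\vb)$ for $1\le k\le M_2$ lie strictly below this segment because $(\min\va w\vb)^{-k}<S^{-k}$. Hence the Newton polygon of $P_2^*$ consists of the Newton polygon of $p_1$ on $[0,M_1]$ followed by a single edge of slope $-\log S$ and horizontal length $M_2+1$, and slope-based (Hensel) factorization gives $P_2^*=r_2s_2$ where $r_2$ has degree $M_1$ with the same critical radii and multiplicities as $p_1$, while $s_2$ has degree $M_2+1$ with all zeros on $C(0,S)$.

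Finally, set $P_2:=q_1q_3P_2^*$; this produces the factorization $P_2=p_2q_2$ with $p_2:=r_2s_2$ and $q_2:=q_1q_3$, and each zero of $q_2$ in $A_2\cup A_3$ is simple since $A_2\cap A_3=\emptyset$ by $\max_{z\in A_2}\va z\vb<S<\min_{w\in A_3}\va w\vb$. Substituting $Gq_3=p_1+z^{M_1+M_2+1}E$ yields
\[
P_2=q_1(p_1+z^{M_1+M_2+1}E)+cz^{M_1+M_2+1}q_1q_3=P_1+z^{M_1+M_2+1}q_1(E+cq_3),
\]
so $Q:=q_1(E+cq_3)\in\ka[z]$ has degree exactly $M_2+M_3$, as required. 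The main obstacle is the Newton polygon verification in the third paragraph: confirming that the intermediate $g$-coefficients remain strictly below the new slope-$(-\log S)$ edge. This is precisely where the strict separation $S<\min_{w\in A_3}\va w\vb$ enters, via the Cauchy-type bound established in the second paragraph.
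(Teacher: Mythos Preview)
Your argument is correct and is a genuinely different route from the paper's. The paper proceeds by \emph{interpolation}: it fixes a point $w_1\in C(0,S)$ and uses the Vandermonde system (Lemma~\ref{tiadelphi}) to find $Q$ so that $P_2=P_1+z^{M_1+M_2+1}Q$ vanishes at every point of $A_2\cup A_3\cup\{w_1\}$; the full zero structure of $P_2$ is then extracted a posteriori via the auxiliary Lemma~\ref{korralasign}, which transfers critical-radius information between $P_2$ and $Q$. You instead build $P_2$ already in factored form $q_1q_3P_2^*$, using the truncation $G$ of $p_1q_3^{-1}$ and a Newton-polygon/slope-factorisation argument to split $P_2^*$; the polynomial $Q$ then drops out of the algebra. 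Your approach avoids Lemma~\ref{korralasign} entirely and replaces it with standard Newton-polygon facts over an algebraically closed complete field, at the cost of the formal-power-series manipulation and the coefficient estimate on $G$. Both methods leave one free parameter on $C(0,S)$ (the paper's $w_1$, your scalar $c$).

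One step you pass over quickly deserves a line of justification: that the first $M_1$ slopes of the Newton polygon of $G$ agree with those of $p_1$. Your slope-separation heuristic is correct, and it is made rigorous by the identity $\lvert q_3(z)\rvert=1$ for $\lvert z\rvert<\min_{w\in A_3}\lvert w\rvert$, which you already noted: this gives $M_R(p_1q_3^{-1})=M_R(p_1)$ for all such $R$, and together with your bound $\lvert g_i\rvert\le(\prod_{z\in A_1}\lvert z\rvert)^{-1}(\min\lvert w\rvert)^{M_1-i}$ for $i>M_1$ it forces $\max_{0\le n\le M_1}\lvert g_n\rvert R^n=\max_{0\le n\le M_1}\lvert a_n\rvert R^n$ for every $R<\min\lvert w\rvert$, hence the two polygons on $[0,M_1]$ coincide. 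With that clarified, the remainder (the single new edge of slope $-\log S$ and the degree count for $Q=q_1(E+cq_3)$) goes through exactly as you wrote.
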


\begin{proof}
We suppose that
\begin{eqnarray*}
 \tl \va z \vb : z \in A_1 \tr  &=& \tl  R_1, \ldots, R_{N_1} \tr  \\
  \tl  \va z \vb : z \in A_2 \tr  &=& \tl R_{N_1 +1 }, \ldots, R_{N_2} \tr  \\
    \tl \va z \vb : z \in A_3 \tr &=& \tl R_{N_2 +1 }, \ldots, R_{N_3} \tr ,
   \end{eqnarray*}
 with $R_1 < \cdots < R_{N_3}$,  and that for each $j \in \tl N_1 +1 , \ldots , N_3 \tr$, there are $k_j$ (pairwise different) points $z \in A_2 \cup A_3$ with $\va z \vb = R_j$. Also,
   for each $j \in \tl 1, \ldots, N_1 \tr$, there are $k_j$ zeros in $ A_1 $ with absolute value $ R_j$.

Fix $w_1 \in \ka$ with $\va w_1 \vb =S$, so $R_{N_2} < \va w_1 \vb < R_{N_2 +1}$.
According to Lemma~\ref{tiadelphi}, there exist $M_2  +  M_3 + 1$ coefficients  $b_0,
\ldots, b_{M_2 + M_3} \in \ka$ such that $b_0 + b_1 z + \cdots + b_{M_2 + M_3} z^{M_2 + M_3} = -  P_1 (z)/ z^{M_1 + M_2 +1}  $
for all $z \in A_2 \cup A_3 \cup \tl w_1 \tr$, that is,
$$ P_2 (z) := P_1 (z) + z^{M_1 + M_2 +1} \pl \sum_{n=0}^{M_2 + M_3} b_n z^{n} \pr =0 .$$

Since   $R_{N_2 +1}$ is bigger than $\va w_1 \vb$ and  $\mu_{\va w_1 \vb} (P_1 ) = \nu_{\va w_1 \vb} (P_1 ) = M_1 + M_2 $,
$$\mu_{R_{N_2 + 1}} (P_2) > \mu_{\va w_1 \vb} (P_2) \ge \mu_{\va w_1 \vb} (P_1 )   = M_1 + M_2 ,$$
and
we conclude from Lemma~\ref{korralasign} that  $Q_0 (z) := \sum_{n=0}^{M_2 + M_3} b_n z^n $  has exactly $k_i$ zeros in each $C(0, R_i)$ for $i=N_2 + 1, \ldots, N_3$.
On the other hand,  each $z \in A_2$ is a zero of $Q_0 (z)$, and the degree of $Q_0 (z)$ is $M_2 + M_3$, so  $Q_0 (z)$ has exactly $k_i$ zeros in each $C(0, R_i)$ for $i=N_1 + 1, \ldots, N_3$. Since it has no other zeros, again by Lemma~\ref{korralasign},
 if $S_1 > R_{N_2}$ with $S_1 \neq R_{N_2 +1}, \ldots, R_{N_3}$ is a critical radius for $P_2$, then $\mu_{S_1} (P_2) \le M_1 + M_2$, implying that $\nu_{R_{N_2}} (P_2) \le M_1 + M_2$. On the other hand, since  $\nu_{R_{N_2}} (P_2) \ge \nu_{R_{N_2}} (P_1)$, we deduce that $\nu_{R_{N_2}} (P_2) =  M_1 + M_2 = \mu_{S_1} (P_2)$. We conclude that $\va w_1 \vb$ is the only critical radius for $P_2$ bigger than $R_{N_2}$ and different from all other $R_i$, which necessarily gives
$\mu_{\va w_1 \vb} (P_2) = M_1 + M_2$ and $\nu_{\va w_1 \vb} (P_2) = \mu_{R_{N_2 +1}} (P_2) = M_1 + 2 M_2 +1$. This
 implies that
$P_2 (z)$ has $M_2 + 1$ zeros in $C(0, \va w_1 \vb)$.

 Finally, since $\nu_{R_{N_2}} (P_2)   = M_1 + M_2 $ we have that $\va a_{M_1 + M_2} \vb {R_{N_2}}^{M_1 + M_2}  > \va b_n \vb {R_{N_2}}^{M_1 + M_2 + n +1}$ for all $n \ge 0$, which implies that $$\va a_{M_1 + M_2} \vb R^{M_1 + M_2}  > \va b_n \vb R^{M_1 + M_2 + n +1}$$
 whenever $0 < R< R_{N_2}$. Consequently, critical radii of $P_2 (z)$ and $P_1 (z)$ in $\pl 0, R_{N_2} \qr$ coincide, as well as the number
 of zeros in each critical radius. This means that
 each $C(0, R_i)$ contains exactly $k_i$ zeros of $P_2 (z)$, for $i=1 , \ldots, N_2$.
\end{proof}

\begin{prop}\label{gatillo}
Let $ \ze $ be a sequence in $\de$ with $c:= \prod_{n=1}^{\infty} \va z_n \vb >0$. Suppose that
the   disks $D^+ \pl z_n , \epsilon_n \pr$, $n \in \en$, are pairwise disjoint.
Then there exists $f \in \hac$ with $ f (0) =1$ and $\vc f \vd \in \va \ka^{\times} \vb$ having exactly a single zero in each $D^+ \pl z_n, \epsilon_n \pr$ and such that, for any other zero $z$ of $f$, $\va z \vb \neq \va z_n \vb $ for every $n \in \en$.
\end{prop}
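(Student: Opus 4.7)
I would build $f$ as the coefficient-wise limit of a sequence of polynomials $(P_k)$ obtained by iterating Lemma~\ref{markapasos}. Since $\prod_n \va z_n \vb > 0$, the distinct values of the sequence $\pl \va z_n \vb \pr$ form an increasing sequence $\rho_1 < \rho_2 < \cdots$ with $\rho_k \to 1$, and each fiber $I_l := \tl n : \va z_n \vb = \rho_l \tr$ is finite of size $m_l \ge 1$, since otherwise the factor $\rho_l^{m_l}$ would already annihilate the product.

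Starting from $P_0 := 1$, I iterate Lemma~\ref{markapasos} to define $P_{k+1}$ from $P_k$ as follows: choose a finite batch $A_3$ of new simple zeros, taken from one or several consecutive fibers $I_l$, and an auxiliary radius $S \in \va \ka^{\times} \vb$ strictly between the previous simple zeros and the new batch. The lemma then produces $P_{k+1} = P_k + z^{\deg P_k + 1} Q_k$ with $P_{k+1}(0) = 1$, whose simple zeros are exactly those of $P_k$ together with $A_3$ and which gains a further batch of zeros on $C(0, S)$. Since $P_{k+1}$ agrees with $P_k$ in all degrees up to $\deg P_k$, the coefficients of $(P_k)$ stabilize to a formal power series $f$.

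By the critical-radii discussion at the beginning of Section~\ref{complauton}, $\vc P_k \vd$ equals the reciprocal of the product of the absolute values of all zeros of $P_k$. The key task is to \emph{batch} the fibers into groups of suitably increasing size and pick each auxiliary $S$ close to the top of its allowed interval, so that the multiplicities of the auxiliary circles grow slowly enough for their contribution to the zero product to remain summable; only the hypothesis $\sum_l m_l (1 - \rho_l) < \infty$ equivalent to $\prod_n \va z_n \vb > 0$ is needed here. This yields $\sup_k \vc P_k \vd < \infty$, so $f \in \hac$ with $f(0) = 1$. Since $P_k \to f$ and $P_k' \to f'$ pointwise on $\de$, each $z_n$ is a zero of $f$, and the factorization $P_k = (z - z_n) R_k$ combined with the positivity of the appropriate infinite product of differences $\va z_n - w \vb$ (using the disjointness of the disks together with the convergent contribution from the circle-zeros) gives $f'(z_n) \ne 0$, so each $z_n$ is a simple zero; the disjointness of the $D^+(z_n, \epsilon_n)$ isolates them. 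The remaining zeros of $f$ lie on the auxiliary circles $C(0, S)$, whose radii were chosen strictly between consecutive $\rho_l$'s, so their absolute values differ from every $\va z_n \vb$.

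Finally, if $\vc f \vd \notin \va \ka^{\times} \vb$, I would replace $f$ by $f \cdot (1 - z/w)$ with $w \in \de$ chosen so that $\va w \vb \in \va \ka^{\times} \vb$ avoids the countable set of already-used radii (the $\va z_n \vb$ and the auxiliary $S$'s) and $\vc f \vd / \va w \vb \in \va \ka^{\times} \vb$; density of $\va \ka^{\times} \vb$ in $\er^+$ makes this possible, and the extra factor preserves all the required zero properties. The main obstacle will be the batching strategy, which must be arranged delicately so that $\vc P_k \vd$ does not blow up: one positions the batches and radii so that the summability $\sum_l m_l (1 - \rho_l) < \infty$ coming from the hypothesis transfers to the infinite family of auxiliary circles.
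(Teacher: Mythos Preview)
Your overall architecture matches the paper's: iterate Lemma~\ref{markapasos}, batch the fibers so that the norms stay bounded, and pass to the coefficient-wise limit. But two steps in your outline do not go through as written.

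\textbf{The zeros $z_n$ are not preserved by the iteration.} You write ``Since $P_k \to f$ pointwise, each $z_n$ is a zero of $f$''. This would need $P_k(z_n)=0$ for all large $k$, but Lemma~\ref{markapasos} does \emph{not} say that. At the step where $z_n$ enters as a point of $A_2$ or $A_3$ it is indeed a simple zero of $q_2$, hence of $P_2$; but at the \emph{next} application of the lemma that zero is absorbed into the new $p_1$, and the output $r_2$ is only guaranteed to have the same critical radii and zero \emph{counts} as $p_1$, not the same zeros. Concretely, $P_{k+1}(z_n)=P_k(z_n)+z_n^{\deg P_k+1}Q_k(z_n)$, and once $P_k(z_n)=0$ the added term is typically nonzero, so the root drifts. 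The paper does not try to keep the $z_n$ as exact zeros of $f$; instead it chooses the batch sizes so that $R_{N_n}^{M_{n+1}} \le c\,\min_i \delta_i^{k_i}$, and then compares $|P_n(z)|$ with $|f(z)-P_n(z)|$ on the circle $|z-z_j^l|=\delta_l$ to force a zero of $f$ inside each $D^-(z_j^l,\delta_l)$. Your sketch is missing exactly this estimate, and the $f'(z_n)\neq 0$ argument cannot replace it because $f(z_n)$ need not vanish at all.

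\textbf{The norm-adjustment trick is circular.} You propose to multiply by $1-z/w$ with $w\in\de$ chosen so that $\vc f\vd/|w|\in|\ka^{\times}|$. But $w\in\ka^{\times}$ forces $|w|\in|\ka^{\times}|$, so $\vc f\vd/|w|\in|\ka^{\times}|$ is equivalent to $\vc f\vd\in|\ka^{\times}|$, which is what you are trying to arrange. A single linear factor over $\ka$ cannot move $\vc f\vd$ into the value group. The paper instead exploits the freedom in the auxiliary radii $S_n\in|\ka^{\times}|\cap(R_{N_n},R_{N_n+1})$: it shows that $\prod_{n\ge 2}S_n^{M_n+1}$ can be made to hit any prescribed value in an interval, hence in particular a value making $\vc f\vd = 1/\bigl(\prod R_n^{k_n}\prod S_n^{M_n+1}\bigr)$ lie in $|\ka^{\times}|$.
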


\begin{proof}
 Let $\tl R_i : i \in \en \tr = \tl \va z_n \vb : n \in \en \tr $, and suppose that, for each $i$,  $R_i < R_{i+1}$
 and
$z_1^i, \ldots , z_{k_i}^i  $ are those $z_n$ of absolute value $R_i$. We  select $\delta_i \in \va \ka^{\times}
 \vb$, $\delta_i \le \min \tl \epsilon_n : \va z_n \vb = R_i \tr $, and assume also that $\delta_1 < R_1$.

Pick any $N_1 \in \en$ and define $M_1 := \sum_{i=1}^{N_1} k_i$. Then take
$N_2 > N_1$ such that, for $M_2 := \sum_{i=N_1 +1}^{N_2} k_i$, ${R_{N_1}}^{M_2} < c  \, \min_{1 \le i \le N_1}   {\delta_i}^{k_i} $.

  Inductively, for any other  $n \in \en$, pick $N_{n+1} > N_n$ such that
 $${R_{N_n}}^{M_{n+1}} \le  c \min_{N_{n -1} +1 \le i \le N_n }    {\delta_i}^{k_i} ,$$
where $M_{n+1} := \sum_{i= N_n +1}^{N_{n+1}} k_i$.

Based on the sequence $\pl R_{N_n} \pr$, we fix a new sequence $(S_n)_{n \ge 2}$ in $\va \ka^{\times} \vb$
with
$$  R_{N_n} < S_n < R_{N_n +1 }$$
for all $n \ge 2$.
Next  call $N_0 :=0$ and, for $n \ge 1$,   $$B_n := \tl R_i : N_{n-1} +1 \le i \le N_n \tr$$ and
$$A_n := \tl z_1^i, \ldots , z_{k_i}^i : N_{n -1} +1 \le i \le N_n \tr .$$

Clearly, the polynomial
$$P_1 (z) := \prod_{i=1}^{N_2} \pl \prod_{j=1}^{k_i}   \pl 1 - \frac{z}{z^i_j}\pr \pr$$
has degree $M_1 + M_2$ and its (simple) zeros are the points in $A_1 \cup A_2$.

We write $P_1 (z) := 1 + a_1 z + \cdots + a_{M_1 +M_2} z^{M_1 + M_2}  $.
Next, by Lemma~\ref{markapasos}, we can inductively construct a sequence $(P_n)$ in $\ka [z]$ such that, for all $n \ge 2$ and $L_n := M_1 + 2 M_2 + \cdots + 2 M_n + M_{n+1} + n-1$,
$$P_n (z)  : = 1 + a_1 z + \cdots + a_{ L_n} z^{L_n} $$ can be written as
\begin{equation*}
P_n (z)   = p_{n} (z)    q_{n} (z) = r_{n} (z) s_n (z)  q_{n} (z),
\end{equation*}
for some polynomials
$$ q_n (z) := \prod_{i \in B_n \cup B_{n+1}} \pl \prod_{j=1}^{k_i} \pl 1 - \frac{z}{z_j^i} \pr \pr$$
of degree $M_n + M_{n+1}$ having all points in $A_n \cup A_{n+1}$ as simple zeros,
$$r_{n} (z):= \prod_{i \in B_1 \cup \cdots \cup B_{n-1}} \pl \prod_{j=1}^{k_i} \pl 1 - \frac{z}{w_j^i} \pr \pr$$
 of degree $M_1 + M_2 +  \cdots   + M_{n-1}$ and with critical radii
$R_i$ for all $i \le N_{i-1}$, having $k_i$ (not necessarily simple) zeros $w_j^i$  in each $C(0, R_i)$, and
$$s_{n} (z):= \prod_{i = 2}^{n} \pl \prod_{j=1}^{M_i +1} \pl 1 - \frac{z}{u_j^i} \pr \pr$$
 of degree $M_2 + M_3 +  \cdots   + M_{n} + n -1$ and with critical radii
$S_i$ ($i \le n$), having $M_i +1$ (not necessarily simple) zeros $u_j^i$  in each $C(0, S_i)$.

Hence, for $l \in B_n$ and $\va z \vb = R_l$,
\begin{eqnarray*}
\va q_n (z) \vb &=& \prod_{i = N_{n-1} +1}^{l} \pl \prod_{j=1}^{k_i} \va 1 - \frac{z}{z_j^i} \vb \pr \\
&=& \frac{{R_l}^{k_{N_{n -1} +1 } + \cdots +k_{l -1} }}{{R_{N_{n-1} +1 }}^{k_{N_{n -1}+ 1}} \cdots {R_{l -1}}^{k_{l-1}}}  \prod_{j=1}^{k_l} \va \frac{z - z_j^l }{z_j^l} \vb ,
\end{eqnarray*}
 $$\va r_{n} (z) \vb = \frac{{R_l}^{M_1 + \cdots +M_{n-1}}}{{R_1}^{k_1} \cdots {R_{N_{n-1}}}^{k_{N_{n -1}}}} $$ and
$$\va s_n (z) \vb= \prod_{i = 2}^{n-1} \pl \prod_{j=1}^{M_i +1} \va 1 - \frac{z}{u_j^i} \vb \pr =
\frac{{R_l}^{M_2 + \cdots +M_{n-1} + n -2}}{{S_2}^{M_2 +1} \cdots {S_{n-1}}^{M_{n-1} +1}} .
$$

This implies that
\begin{equation}\label{topivacan}
\va P_n (z) \vb =  \va s_n (z) \vb  T_l   \prod_{j=1}^{k_l} \va z - z_j^l  \vb  ,
\end{equation}
where $$T_l := \frac{{R_l}^{k_1 + \cdots + k_{l -1}}}{{R_1}^{k_1} \cdots {R_{l }}^{k_{l}}}.$$

On the other hand, if   in addition
$\va z - z_j^l \vb \ge  \delta_l $ for all $j$, then   $\prod_{j=1}^{k_l} \va z - z_j^l \vb \ge {\delta_l}^{k_l} $.
Taking into account that
$$\va a_{L_n} \vb =
\frac{1}{{S_2}^{M_2 +1} \ldots {S_n}^{M_n +1} {R_1}^{k_1} \cdots {R_{N_n+1}}^{k_{N_{n+1}}}} ,$$
 we easily obtain
\begin{eqnarray*}
\va a_{L_n} \vb {R_l}^{L_n} \frac{1}{\va s_n (z) \vb}\frac{1}{T_l}
&=& \frac{{R_{l}}^{M_1 + M_2  +\cdots +  M_{n-1} + 2M_n+  M_{n+1} +1} }{ {S_n}^{M_n +1} {R_{l +1}}^{k_{l+1}} \cdots {R_{N_{n+1}}}^{k_{N_{n+1}}}    {R_l}^{k_1 + \cdots + k_{l -1}}}   \\
&<& \frac{{R_{l}}^{k_l + \cdots + k_{N_{n+1}}} }{{R_{l +1}}^{k_{l+1}} \cdots {R_{N_{n+1}}}^{k_{N_{n+1}}} }
\\ &<&    \frac{{R_{N_n}}^{M_{n+1}}}{c}   \\
&\le & {\delta_l}^{k_l} \\
&\le&    \prod_{j=1}^{k_l} \va z - z_j^l  \vb .
 \end{eqnarray*}

This implies,  by Equality~\ref{topivacan},
\begin{equation}\label{berurubio}
\va a_{L_n} \vb  {R_{l}}^{L_n} < \va P_n (z) \vb.
\end{equation}

Next, using  Lemma~\ref {branu} it is  easy to see that, since each $P_n (z)$ has all its zeros contained in $\de$ and $P_n (0) =1$,
all its coefficients satisfy $$\va a_i \vb \le  \frac{1}{\prod_{j=1}^{\infty} {R_j}^{k_j} \prod_{j=2}^{\infty} {S_j}^{M_j +1}} \le   \frac{1}{c^3} ,$$
which implies
that $f(z) := 1 + \sum_{m=1}^{\infty} a_m z^m$ is bounded and, consequently, belongs to $\hac$. Also, the critical radii for $f$ are the $R_i$ and the $S_i$, and
it has exactly $k_i$ zeros in each $C(0, R_i)$ and $M_i + 1$ zeros in each $C(0, S_i)$. We now define, for
$n \in \en$, $$g_n (z) := f (z) - P_n (z) = \sum_{m= L_n + 1 }^{\infty} a_m z^m .$$

Note that, since  $\va a_{L_n} \vb  {R_{N_{n+1}}}^{L_n} > \va a_m \vb {R_{N_{n+1}}}^m$ for all $m > L_n$,
$$\va a_{L_n} \vb  {R_{l}}^{L_n} > \va a_m \vb {R_{l}}^m $$
for  $l \in B_n$, and consequently,
 if   $\va z \vb = R_l$, then
 \begin{equation}\label{xinesa}
\va g_n (z) \vb <
\va a_{L_n} \vb  {R_{l}}^{L_n} .
\end{equation}
 We deduce from Inequalities~\ref{berurubio} and~\ref{xinesa} that
$\va a_{L_n} \vb  {R_{l}}^{L_n} < \va f (z) \vb$
whenever
 $\va z \vb = R_l $ and
$\va z - z_j^l \vb \ge \delta_l $ for all
$j \in \tl 1, \ldots , k_l \tr$.
In particular, if we fix $j \in  \tl 1, \ldots, k_l \tr$ and take
$w \in \de$ with $\va w - z_j^l \vb = \delta_l $,
then $\va f(w) \vb > \va a_{L_n} \vb  {R_{l}}^{L_n} > \va g_n (z_j^l) \vb$. On the other hand,
$P_n \pl z_j^l \pr =0$, so
$ \va f \pl z_j^l \pr \vb = \va g_n \pl z_j^l \pr \vb $.
This means that, if we define
$h (z) := f \pl z + z_j^l \pr$,
then $  \va h (0) \vb < \va h(z) \vb$
whenever $\va z \vb = \delta_l $.
We conclude that either $h(0) =0$ or there is a critical radius for $h$ between $0$ and
$\delta_l $, and consequently
there is a zero of $f$ in
 $D^- \pl z_j^l , \delta_l \pr $.
 Since $f$ has exactly $k_l$ zeros in $C(0, R_l)$, we are done.

 It just remains to prove that the above $f$ can be taken to satisfy $\vc f \vd \in \va \ka^{\times} \vb$.
 Note that
 apart from
the $R_n$, the critical radii of the function $f$ are
 certain $S_n \in \va \ka^{\times} \vb \cap \pl R_{N_n} , R_{N_n +1} \pr$, $n \ge 2$, chosen  at will. Let us next see  that these can be selected in such a way that
$\vc f \vd = 1/ \pl \prod_{n=1}^{\infty} {R_n}^{k_n} \prod_{n=2}^{\infty} {S_n}^{M_n +1} \pr $ belongs to $\va \ka^{\times} \vb$. Clearly, it is enough to show that every value in the interval $\pl \prod_{n=2}^{\infty} {R_{N_n}}^{M_n +1} , \prod_{n=2}^{\infty} {R_{N_{n+1}}}^{M_n +1} \pr$ is attained by products of the form $\prod_{n=2}^{\infty} {S_n}^{M_n +1}$. It is easy to see that this is equivalent to proving that, given a set $D$ dense in $(0,+ \infty)$, if $(a_n)$ and $(b_n)$ are sequences in $(0, + \infty)$ with $\sum_{n=1}^{\infty} b_n < \infty$ and $0< a_n < b_n$ for all $n$, then every $T \in \pl \sum_{n=1}^{\infty} a_n, \sum_{n=1}^{\infty} b_n \pr$ can be written in the form $T = \sum_{n=1}^{\infty} q_n (t_n)$ with all $q (t_n) \in D$, where $q_n (s):= s a_n + (1-s) b_n$ for every $s \in [0,1]$ and $n \in \en$.

First, it is clear that there exists $s_1 \in (0,1)$ with $T = \sum_{n=1}^{\infty} q_n (s_1)$.
We fix $\epsilon >0$
 and pick $t_1 \in [0,1] $ with  $q_1 (t_1) \in D $ and $q_1 (t_1) + q_2 (0) < q_1 (s_1) + q_2 (s_1) < q_1 (t_1) + q_2 (1)$ such that $\va q_1 (t_1) - q_1 (s_1) \vb <   \epsilon$. Then there exists $s_2 \in (0,1)$ with $q_1 (t_1) + q_2 (s_2) = q_1 (s_1) + q_2 (s_1)$,  that is,
 $ q_1 (t_1) +    q_2 (s_2) +   q_3 (0) < \sum_{n=1}^{3} q_n (s_1)   < q_1 (t_1) +   q_2 (s_2)  +   q_3  (1)$. Consequently,
 there exists $t_2 \in (0,1)$ with $q_2 (t_2) \in D $ such that
$$ q_1 (t_1) +    q_2 (t_2) +   q_3 (0) < \sum_{n=1}^{3} q_n (s_1)   < q_1 (t_1) +   q_2 (t_2)  +   q_3  (1)$$
and
  $\va  q_1 (t_1) +    q_2 (t_2) - \sum_{n=1}^{2} q_n (s_1) \vb < \epsilon/2 $.

Clearly, we inductively find a sequence $(t_n)$ in $(0,1)$ with $q_n (t_n) \in D$ for each $n$, and such that
$
 \va  \sum_{n=1}^k q_n (t_n)  - \sum_{n=1}^k q_n (s_1) \vb <
\epsilon/k
$
for all $k$.
Thus $T= \sum_{n=1}^{\infty} q_n (t_n)$, and we are done.
\end{proof}

\begin{rem}\label{kuku}
Note that, for a sequence $(T_n)$ in $(0,1)$, the function $f $  in Proposition~\ref{gatillo} can be taken so  that no $T_n$ is a critical radius for $f$.
\end{rem}

\section{Sequences determining the same seminorms}\label{nomarkator}

In this section we first show that a seminorm is determined by the behaviour of the radii of seminorms along an ultrafilter.

 \begin{lem}\label{pijamalari}
Let $z_0 \in \de \setminus \tl 0 \tr$ and $s, r \in (0,1)$ satisfy $s \le  r < \va z_0 \vb$.
Then,  for every $f \in \hac \setminus \tl 0 \tr$,
$$
0 \le  \zeta_{D^+ (z_0, r)} (f) - \zeta_{D^+ (z_0, s)} (f)   \le
 \pl {r}^{\cer \pl f, D^+ \pl z_0, r \pr \pr } - {s}^{\cer \pl f, D^+ \pl z_0, r \pr \pr} \pr \vc f \vd .
$$
 \end{lem}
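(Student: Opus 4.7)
The lower bound is immediate from monotonicity of the sup defining $\zeta$, since $D^+(z_0, s) \subseteq D^+(z_0, r)$. For the upper bound, I would first reduce to $f(0) = 1$. If $f(0) = 0$, write $f = z^m g$ with $g(0) \neq 0$; since $r < |z_0|$ means $0 \notin D^+(z_0, r)$, the factor $z^m$ has constant modulus $|z_0|^m$ there, $\cer(f, D^+(z_0, r)) = \cer(g, D^+(z_0, r))$, and $\|f\| = \|g\|$ by multiplicativity of the Gauss norm, so the statement for $g$ implies that for $f$. After scaling we may then assume $f(0) = 1$. If $R := |z_0|$ is not a critical radius of $f$, Corollary~\ref{angiosperma} yields $\zeta_{D^+(z_0, r)}(f) = |f(z_0)| = \zeta_{D^+(z_0, s)}(f)$ and $\cer(f, D^+(z_0, r)) = 0$, so both sides vanish.

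Assume from now on that $R = R_k$ is a critical radius of $f$ with zeros $w_1^k, \ldots, w_{m_k}^k$ on $C(0, R)$. By Lemma~\ref{sinaltaboces}, for every $z \in C(0, R)$,
$$ |f(z)| = C_0 \prod_{j=1}^{m_k} |z - w_j^k|, \qquad C_0 := \frac{R^{m_1 + \cdots + m_{k-1}}}{\prod_{i=1}^k R_i^{m_i}}. $$
Partition these zeros into $A := \{|z_0 - w_j^k| \le s\}$, $B := \{s < |z_0 - w_j^k| \le r\}$, $E := \{|z_0 - w_j^k| > r\}$, so $|A| + |B| = \cer(f, D^+(z_0, r)) =: Z$. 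Taking $\max(r, |z_0 - w_j^k|)$ termwise in the ultrametric bound $|z - w_j^k| \le \max(|z-z_0|, |z_0 - w_j^k|)$ gives $\zeta_{D^+(z_0, r)}(f) \le C_0 \, r^{|A|+|B|} \prod_{E} |z_0 - w_j^k|$. For the matching lower estimate, I would choose $s' \in |\ka^\times|$ with $\max\{|z_0 - w_j^k| : w_j^k \in A,\, |z_0 - w_j^k| < s\} < s' < s$ (possible by density of $|\ka^\times|$ in $(0,+\infty)$ and finiteness of the zero set) and any $z_*$ with $|z_* - z_0| = s'$. The strong triangle inequality then forces $|z_* - w_j^k| = s'$ for $w_j^k \in A$ with $|z_0 - w_j^k| < s$, $|z_* - w_j^k| = s$ for $w_j^k \in A$ with $|z_0 - w_j^k| = s$, and $|z_* - w_j^k| = |z_0 - w_j^k|$ for $w_j^k \in B \cup E$; letting $s' \to s^-$ through $|\ka^\times|$, one obtains $\zeta_{D^+(z_0, s)}(f) \ge C_0 \, s^{|A|} \prod_{B \cup E} |z_0 - w_j^k|$.

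Subtracting the two estimates and using $|z_0 - w_j^k| > s$ for $w_j^k \in B$ (so $s^{|A|} \prod_B |z_0 - w_j^k| \ge s^{|A|+|B|}$) yields
$$ \zeta_{D^+(z_0, r)}(f) - \zeta_{D^+(z_0, s)}(f) \le C_0 \prod_{E} |z_0 - w_j^k| \,(r^Z - s^Z). $$
It remains to bound the prefactor: since $|z_0 - w_j^k| \le R$ for $w_j^k \in E$, $|E| \le m_k$, and $R < 1$, one has
$C_0 \prod_E |z_0 - w_j^k| \le R^{m_1+\cdots+m_{k-1}+|E|}/\prod_{i=1}^k R_i^{m_i} \le 1/\prod_{i=1}^k R_i^{m_i} \le 1/\prod_{i=1}^\infty R_i^{m_i} = \|f\|$,
the last equality being the formula for $\|f\|$ recalled at the start of Section~\ref{complauton}. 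The main obstacle is establishing the lower bound for $\zeta_{D^+(z_0, s)}(f)$: one must carefully combine the density of $|\ka^\times|$ with the ultrametric case analysis (distinguishing zeros strictly inside, on the boundary of, and outside $D^+(z_0, s)$) in order to produce the witness $z_*$ giving the sharp value $C_0\, s^{|A|} \prod_{B \cup E} |z_0 - w_j^k|$.
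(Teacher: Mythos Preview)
Your proof is correct but takes a more computational route than the paper. The paper simply factors $f(z) = g(z)\prod_{i=1}^{m}(z-w_i)$, where $w_1,\ldots,w_m$ are the zeros of $f$ on $C(0,\va z_0\vb)$, and then exploits multiplicativity of $\zeta_{D^+(z_0,\cdot)}$ directly: since $g$ has no zeros on $C(0,\va z_0\vb)$, Corollary~\ref{angiosperma} gives $\zeta_{D^+(z_0,r)}(g)=\zeta_{D^+(z_0,s)}(g)$, while each linear factor contributes $\zeta_{D^+(z_0,t)}(z-w_i)=\max\{t,\va z_0-w_i\vb\}$. This yields exact formulae for $\zeta_{D^+(z_0,r)}(f)$ and $\zeta_{D^+(z_0,s)}(f)$ at once, and the common prefactor $\zeta_{D^+(z_0,r)}(g)\prod_{w_i\notin D^+(z_0,r)}\va z_0-w_i\vb$ is bounded by $\vc g\vd=\vc f\vd$ via Lemma~\ref{khatxarel}. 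Your approach instead normalizes to $f(0)=1$, invokes the explicit pointwise formula of Lemma~\ref{sinaltaboces}, recovers the lower bound on $\zeta_{D^+(z_0,s)}(f)$ by producing witness points $z_*$ with $\va z_*-z_0\vb=s'\nearrow s$ through $\va\ka^\times\vb$, and bounds the prefactor using $\vc f\vd=1/\prod_i R_i^{m_i}$. Both arguments are sound; the paper's sidesteps the normalization, the case split on whether $\va z_0\vb$ is critical, and the witness/limit construction, at the price of appealing to multiplicativity of the disk seminorm rather than working from its definition as a supremum.
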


\begin{proof}
We
write $f(z)  = g (z) \prod_{i=1}^{m} (z- w_i)$, where $w_1,  \ldots , w_m$ are the zeros of $f$ in $C(0, \va z_0 \vb)$.
Taking into account Corollary~\ref{angiosperma}, it is easy to see that
$ \zeta_{D^+ \pl z_0, r \pr} (g)   =  \zeta_{D^+ \pl 0, s \pr} (g) $. Consequently,
$$
\zeta_{D^+ (z_0, r)} (f)
= \zeta_{D^+ (z_0, r)} (g) \ r^{\cer \pl f, D^+ \pl z_0,  r \pr \pr}   \prod_{w_i \notin D^+ ( z_0 , r)} \va z_0 - w_i \vb $$
and that
$$
\zeta_{D^+ (z_0, s)} (f)
\ge   \zeta_{D^+ (z_0, r)} (g)  \ s^{\cer \pl f, D^+ \pl z_0,  r \pr \pr} \prod_{w_i \notin D^+ ( z_0 , r)} \va z_0 - w_i \vb .$$
Since $\vc f \vd = \vc g \vd$, the conclusion follows easily.
\end{proof}

  \begin{cor}\label{pernia}
Let  $\kai$ be a sequence in $\en$. Suppose that $\ef $ is a nonprincipal  ultrafilter in $\en$ and that
$(r_n)$ and $
 (s_n)$ are sequences in $(0,1)$ such that $\lim_{\ef} {r_n}^{k_n} = \lim_{\ef} {s_n}^{k_n} \neq 0, 1$.
If $\ze$ is a sequence in $\de$ with
$r_n , s_n < \va z_n \vb$ for all $n$,
then $\lim_{\ef} \zeta_{D^+ (z_n , r_n )}   = \lim_{\ef} \zeta_{D^+ (z_n , s_n )}  $.
 \end{cor}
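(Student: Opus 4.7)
The plan is to compare the two seminorms pointwise via Lemma~\ref{pijamalari} and then control the resulting error term along $\ef$ by exploiting the hypothesis that $r := \lim_{\ef} r_n^{k_n} = \lim_{\ef} s_n^{k_n}$ belongs to $(0,1)$. First I would partition $\en$ into $\tl n : s_n \le r_n \tr$ and its complement; since $\ef$ is an ultrafilter, exactly one of these sets lies in $\ef$, so after swapping the roles of $(r_n)$ and $(s_n)$ if needed, I may assume $s_n \le r_n$ on an $\ef$-large set. Fixing $f \in \hac$, the hypothesis $r_n, s_n < \va z_n \vb$ allows Lemma~\ref{pijamalari} to apply, giving
$$0 \le \zeta_{D^+ (z_n, r_n)} (f) - \zeta_{D^+ (z_n, s_n)} (f) \le \pl r_n^{m_n} - s_n^{m_n} \pr \vc f \vd,$$
where $m_n := \cer \pl f, D^+ (z_n, r_n) \pr$. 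Thus everything reduces to showing $\lim_{\ef} \pl r_n^{m_n} - s_n^{m_n} \pr = 0$.

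The key trick is to rewrite $r_n^{m_n} = \pl r_n^{k_n} \pr^{\alpha_n}$ and $s_n^{m_n} = \pl s_n^{k_n} \pr^{\alpha_n}$, where $\alpha_n := m_n/k_n \ge 0$, and to split on the value of the (unique) ultrafilter limit $\alpha := \lim_{\ef} \alpha_n$ in the compact set $[0, +\infty]$. If $\alpha < +\infty$, joint continuity of $(a, \beta) \mapsto a^{\beta}$ on $(0, 1] \times [0, \alpha + 1]$ together with $r_n^{k_n}, s_n^{k_n} \to r$ along $\ef$ forces both $r_n^{m_n}$ and $s_n^{m_n}$ to approach $r^{\alpha}$, so their difference vanishes. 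If $\alpha = +\infty$, I would fix any $c \in (r, 1)$ (for instance $c := (1+r)/2$); since the $\ef$-set $\tl n : \max \pl r_n^{k_n}, s_n^{k_n} \pr \le c \tr$ carries the ultrafilter, both $\pl r_n^{k_n} \pr^{\alpha_n}$ and $\pl s_n^{k_n} \pr^{\alpha_n}$ are eventually bounded by $c^{\alpha_n}$, and $c^{\alpha_n} \to 0$ along $\ef$ because $\alpha_n \to + \infty$.

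The main conceptual obstacle is that the multiplicity $m_n$ of zeros of $f$ inside $D^+ (z_n, r_n)$ is not controlled by the data $(r_n), (s_n), (k_n)$ alone: it depends on $f$ and may grow with $n$, especially when $\va z_n \vb$ approaches $1$. The two-case analysis organized by the auxiliary ratio $\alpha_n = m_n/k_n$ is precisely what finesses this: whether $\alpha_n$ stays bounded or escapes to infinity along $\ef$, the quantity $r_n^{m_n} - s_n^{m_n}$ is forced to zero. Since $f \in \hac$ was arbitrary, the resulting equality of ultrafilter limits gives $\lim_{\ef} \zeta_{D^+ (z_n, r_n)} = \lim_{\ef} \zeta_{D^+ (z_n, s_n)}$ as points of $\ma$, completing the argument.
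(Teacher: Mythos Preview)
Your proof is correct and follows essentially the same route as the paper: reduce via the ultrafilter to $s_n \le r_n$ on an $\ef$-large set, apply Lemma~\ref{pijamalari}, and show that the error $r_n^{m_n} - s_n^{m_n}$ tends to $0$ along $\ef$. The paper dispatches this last step in one line by asserting that $\lim_{\ef} r_n^{t_n} = \lim_{\ef} s_n^{t_n}$ for every sequence $(t_n)$ in $\en$, whereas your case split on $\alpha = \lim_{\ef} m_n/k_n$ makes that assertion explicit.
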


 \begin{proof}
We can assume that $A \in \ef$ satisfies
$s_n \le r_n  $
 for every $n \in A$.

 Let $f \in \hac$, $f \neq 0$.
By Lemma~\ref{pijamalari}, for $n \in A$,
$$
0 \le  \zeta_{D^+ (z_n, r_n)} (f) - \zeta_{D^+ (z_n, s_n)} (f)   \le
 \pl {r_n}^{\cer \pl f ,  D^+ (z_n, r_n) \pr} - {s_n}^{\cer \pl f ,   D^+ (z_n, r_n) \pr} \pr \vc f \vd .
$$
Obviously, from the hypothesis we deduce that $\lim_{\ef}  {r_n}^{t_n} = \lim_{\ef}  {s_n}^{t_n}$ for every sequence $(t_n)$ of natural numbers, and the conclusion follows.
\end{proof}

\begin{rem}\label{ugenio}
In the case when $\ka$ is not spherically complete, a natural question is whether the limit of norms based on filters in $\de$ with no center
allows us to define new seminorms. We will see that this is not the case.
Suppose that, for each $n \in \en$,  $\vc \cdot \vd_n = \lim_{m \ra \infty} \zeta_{D^+ \pl z_m^n , s_m^n \pr}$, where
$$C \pl 0, \va z_1^n \vb \pr \supset D^+ \pl z_1^n , s_1^n \pr \supset D^+ \pl z_2^n , s_2^n \pr \supset \cdots $$
and $\bigcap_{m=1}^{\infty} D^+ \pl z_m^n , s_m^n \pr = \emptyset$.
Suppose also that $\lim_{n \ra \infty} \va z_1^n \vb =1$. Take a nonprincipal  ultrafilter $\ef$ in $\en$ and define the seminorm $\psi:= \lim_{\ef} \vc \cdot \vd_n \in \ma$.
It is clear that $s_n := \lim_{m \ra \infty} s_m^n >0$ for each $n$. Consider a sequence $(k_n)$ in $\en$ such that
$r:= \lim_{\ef} {s_n}^{k_n} \in (0,1)$ and take, for each $n$, an $m_n \in \en$ such that
$ \lim_{\ef} {s_{m_n}^n}^{k_n} = r$. Calling $r_n := s_{m_n}^n$ and $z_n := z_{m_n}^n$, we easily check that
$\psi = \lim_{\ef} \zeta_{D^+ \pl z_n , r_n \pr}$.
\end{rem}

\begin{cor}\label{byemarcal}
Given $\varphi = \lim_{\ef} \zeta_{D^+ \pl z_n , r_n \pr} \in \mil$, there exist a sequence $(w_n) $ in $\de$ with $\lim_{n \ra \infty } \va w_n \vb =1$ and a sequence $(s_n)$ in $(0,1)$ in such a way that all the disks $D^+ \pl w_n , s_n \pr$ are pairwise disjoint and $\varphi = \lim_{\ef} \zeta_{D^+ \pl w_n , s_n \pr}$.
\end{cor}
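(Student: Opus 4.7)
The plan is to first reduce, as in the discussion just before Theorem~\ref{nadabouche}, to the case $r_n<\va z_n\vb$ for every $n$ (the case $\varphi=\vc\ \vd$ is dispatched by taking any $(w_n)$ with $\va w_n\vb\ra 1$ and $s_n=\va w_n\vb^2$), and then split the argument into two cases depending on whether $\lim_{\ef}r_n=0$ or $\lim_{\ef}r_n>0$.  In the first case the introduction already identifies $\varphi$ with $\delta_{\ze,\ef}$; in the second, by Corollary~\ref{pernia} and the preceding discussion, we may assume $r_n=\sqrt[k_n]{r}$ for some $\kai=(k_n)\in\en^{\en}$ and some $r\in(0,1)$.

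For the first case the main analytic input is the uniform ultrametric Lipschitz bound $\va f(z)-f(z')\vb\le\vc f\vd\cdot\va z-z'\vb$ for $f\in\hac$ and $z,z'\in\de$ with $\va z-z'\vb<1$; this follows by expanding $f(z)=\sum_{k\ge 0}b_k(z-z_0)^k$ around any $z_0\in\de$ and using the coefficient bound $\va b_k\vb\le\vc f\vd$ (obtained by letting $r\ra 1^-$ in $\va b_k\vb r^k\le\zeta_{D^+(z_0,r)}(f)\le\vc f\vd$).  Since $\ka$ is infinite and $\va\ka^{\times}\vb$ is dense in $\er^+$, I would pick $\delta_n\in\ka$ with $0<\va\delta_n\vb\le 1/n$ so that the points $w_n:=z_n+\delta_n$ are pairwise distinct, and then set $s_n<\min\pl 1/n,\,\tfrac12\min_{m\neq n}\va w_n-w_m\vb\pr$.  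Then $\va w_n\vb\ra 1$ (because $\va\delta_n\vb\le 1/n<\va z_n\vb$ for $n$ large), the $D^+(w_n,s_n)$ are pairwise disjoint, and $s_n\ra 0$; the Lipschitz bound yields $\va f(w_n)-f(z_n)\vb\ra 0$, while the inequalities $\va f(w_n)\vb\le\zeta_{D^+(w_n,s_n)}(f)\le\max\pl\va f(w_n)\vb,\vc f\vd s_n\pr$ give $\lim_{\ef}\zeta_{D^+(w_n,s_n)}(f)=\lim_{\ef}\va f(z_n)\vb=\varphi(f)$ for every $f\in\hac$.

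For the second case the starting observation is that $r_n<\va z_n\vb$ forces $D^+(z_n,r_n)\subset C(0,\va z_n\vb)$, so disks on distinct circles are automatically disjoint, and since $\va z_n\vb\ra 1$ each fibre $\{n:\va z_n\vb=R\}$ is finite.  My plan is an inductive construction that uses the infiniteness of the residue field of $\ka$ to move $w_n$ (on either the same circle or one very close) into a residue class avoiding all previously chosen disks, while keeping $s_n$ in the range permitted by Corollary~\ref{pernia} (namely $\lim_{\ef}s_n^{k_n}=r$); preservation of $\varphi$ along $\ef$ is then verified by passing to the $\xi$-form via Corollary~\ref{conillosis} and checking that local zero-contributions of each $f\in\hac$ agree in the ultrafilter limit.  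The main obstacle lies precisely here: Corollary~\ref{pernia} pins $s_n$ close to $r^{1/k_n}$, so disjointness cannot be obtained by shrinking radii alone and must come from center perturbations that nevertheless preserve the $\xi$-quantities---a delicate requirement, since these values depend sensitively on the zeros of $f$ on the relevant circles, and I expect this step to require a careful induction exploiting the tree-like structure of ultrametric disks together with the finite-fibre property of $n\mapsto\va z_n\vb$.
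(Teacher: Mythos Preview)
Your treatment of the case $\lim_{\ef}r_n=0$ is correct, though more elaborate than necessary. Note in particular that your worry about $\min_{m\neq n}\va w_n-w_m\vb$ is unfounded: since $\va w_n\vb\to 1$, for each fixed $n$ only finitely many $m$ satisfy $\va w_m\vb=\va w_n\vb$, while for all other $m$ one has $\va w_n-w_m\vb=\max(\va w_n\vb,\va w_m\vb)\ge\va w_n\vb$, so the infimum is strictly positive.

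The genuine gap is in the case $\lim_{\ef}r_n>0$, where you explicitly anticipate a ``delicate requirement'' involving the $\xi$-quantities and Corollary~\ref{conillosis}. This difficulty is illusory, and you are missing the key simplification. The point is that in an ultrametric field every element of a closed disk is a centre: if you choose $w_n\in D^+\pl z_n,r_n\pr$ (for instance $w_n\in C\pl z_n,r_n\pr$, i.e.\ $\va w_n-z_n\vb=r_n$), then $D^+\pl w_n,r_n\pr=D^+\pl z_n,r_n\pr$ as sets, and hence $\zeta_{D^+\pl w_n,r_n\pr}=\zeta_{D^+\pl z_n,r_n\pr}$ \emph{identically}. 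No verification via zero counts is needed; the seminorms coincide term by term, not merely in the $\ef$-limit.

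With this observation the argument becomes short. On each circle $C\pl 0,R\pr$ there are only finitely many indices $n_1,\ldots,n_k$ with $\va z_{n_i}\vb=R$; using that the residue field of $\ka$ is infinite, one chooses $w_{n_i}\in C\pl z_{n_i},r_{n_i}\pr$ so that $\va w_{n_{i}}-w_{n_{j}}\vb=\max\tl r_{n_{i}},r_{n_{j}},\va z_{n_{i}}-z_{n_{j}}\vb\tr$ for $i\neq j$, which forces the \emph{open} disks $D^-\pl w_{n_i},r_{n_i}\pr$ to be pairwise disjoint. Since $\zeta_{D^-\pl w_n,r_n\pr}=\zeta_{D^+\pl w_n,r_n\pr}=\zeta_{D^+\pl z_n,r_n\pr}$, one already has $\varphi=\lim_{\ef}\zeta_{D^-\pl w_n,r_n\pr}$. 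To obtain pairwise disjoint \emph{closed} disks, shrink each $r_n$ to some $s_n<r_n$ with $\lim_{\ef}s_n^{k_n}=r$ and invoke Corollary~\ref{pernia}; then $D^+\pl w_n,s_n\pr\subset D^-\pl w_n,r_n\pr$ gives the required disjointness. This single argument in fact covers both cases uniformly, rendering your separate Lipschitz treatment of the case $\lim_{\ef}r_n=0$ unnecessary.
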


\begin{proof}
Note that, if $\varphi = \vc \cdot \vd$, then $\varphi = \lim_{\ef} \zeta_{D^+ \pl z_n , \va z_n \vb^2 \pr} $, so
in all cases we can  assume without loss of generality that $D^+ \pl z_n , r_n \pr \subset C \pl 0 , \va z_n \vb \pr$ for all $n$. Fix $n_0 \in \en$, and suppose that the set
$$\tl n \in \en : \va z_n \vb = \va z_{n_0} \vb \tr = \tl n_1, \ldots , n_k \tr .$$
It is straightforward to prove that, for each $i \in \tl 1, \ldots , k \tr$, there exists $w_{n_i} \in C \pl z_{n_i} , r_{n_i} \pr$
such that $\va w_{n_{i_1}} - w_{n_{i_2}} \vb = \max \tl r_{n_{i_1}} , r_{n_{i_2}} , \va z_{n_{i_1}} - z_{n_{i_2}} \vb \tr$ whenever $i_1 \neq i_2$. This implies that the disks $D^- \pl w_{n_i} , r_{n_i} \pr$ are pairwise disjoint. Of course we can define a sequence $(w_n)$ with the desired properties by putting $w_n := w_{n_i}$ when $z_n = z_{n_i}$. Obviously, $\varphi = \lim_{\ef} \zeta_{D^- \pl w_n , r_n \pr}$. Now, if we assume that $\lim_{\ef} r_n >0$, then the conclusion follows immediately  taking into account Corollary~\ref{pernia}.
The case when $\lim_{\ef} r_n =0$ is similar.
\end{proof}

\begin{rem}\label{nonada}
Note that, in Corollary~\ref{byemarcal}, if $\ze$ is regular with respect to $\ef$, then so is $\we$. Taking into account that each  $\varphi = \zeta_{\ze, \ef}^{\kai, r} \in \mak$ can be written by $\varphi = \lim_{\ef} \zeta_{D^+ \pl w_n , s_n \pr}$, where all the disks $D^+ \pl w_n , s_n \pr$ are pairwise disjoint, we conclude that $\varphi \in \mim$. Thus, $\mak \subset \mim$.
\end{rem}

\begin{cor}\label{puigdemont}
Every $\varphi \in \mil$ can be written by $\varphi = \lim_{\ef} \zeta_{D^+ \pl z_n, r_n \pr}$, where $\ef \in \beta \en \setminus \en$ and  the disks $D^+ \pl z_n, r_n \pr$ are pairwise disjoint.
\end{cor}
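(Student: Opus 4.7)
The plan is to observe that this is essentially a restatement of Corollary~\ref{byemarcal} using only the definition of $\mil$. First, I would unpack the definition: by the very way $\mil$ was introduced, every $\varphi \in \mil$ already admits at least one representation of the form $\varphi = \lim_{\ef} \zeta_{D^+(z_n, r_n)}$ for some nonprincipal ultrafilter $\ef$ in $\en$, some sequence $(z_n)$ in $\de$ with $\lim_{n\to\infty} \va z_n \vb = 1$, and some $(r_n)$ in $(0,1)$. The only thing not guaranteed by the definition is the disjointness of the disks.

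Next, I would feed this representation directly into Corollary~\ref{byemarcal}. That corollary produces a new sequence $(w_n)$ in $\de$ (with $\lim \va w_n \vb = 1$) and a new sequence $(s_n)$ in $(0,1)$ such that the disks $D^+(w_n, s_n)$ are pairwise disjoint and $\varphi = \lim_{\ef} \zeta_{D^+(w_n, s_n)}$ along the same ultrafilter $\ef$. Relabelling $(w_n, s_n)$ as $(z_n, r_n)$ yields exactly the desired representation, so the conclusion follows immediately.

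Since the entire argument is a one-line invocation of Corollary~\ref{byemarcal}, there is essentially no obstacle to overcome here. The only thing worth noting is that the ultrafilter $\ef$ in the final representation is the same one used in the initial representation; all the work (finding replacement centres $w_n$ with pairwise disjoint enclosing disks via the ultrametric geometry of $\ka$, and invoking Corollary~\ref{pernia} to compensate for the change of radii) has already been done inside the proof of Corollary~\ref{byemarcal}. Thus the present corollary should be presented as an immediate consequence, with no additional computation required.
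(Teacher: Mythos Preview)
Your proposal is correct and matches the paper's approach: Corollary~\ref{puigdemont} is stated without proof immediately after Corollary~\ref{byemarcal} and Remark~\ref{nonada}, precisely because it is an immediate restatement of Corollary~\ref{byemarcal}. Your observation that the same ultrafilter $\ef$ works is exactly what Corollary~\ref{byemarcal} provides, so nothing further is needed.
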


Next we show that the converse of Corollary~\ref{pernia} does not hold in general. In fact, very different behavior of the radii along an ultrafilter can lead to the same seminorm (see Example~\ref{noterrias}
and
 Remark~\ref{saradientes}; see also Theorem~\ref{juntafaculta}).

\begin{prop}\label{lucile}
Let $\ze$ be a  sequence in $\de$ with $\lim_{n \ra \infty} \va z_n \vb =1$, and let $  \kai$ be a sequence in $\en$. Suppose that $\ef$ is a nonprincipal ultrafilter in $\en$ with the property that, for every $C \in \ef$,
$$ \lim_{\ef} \prod_{\substack{\va z_m \vb = \va z_n \vb   \\ m \in C \\  m \neq n }} \va z_n - z_m \vb^{k_m} < 1.$$
Let $(r_n)$ and $(s_n)$   be sequences in $(0,1)$ with $z_m \notin D^- (z_n, r_n), D^- (z_n, s_n)$ whenever $m \neq n$. If there exists
$C_0:= \tl n_1, n_2, \ldots, n_i, \ldots \tr \in \ef$      such that
$$\lim_{i \ra \infty} {r_{n_i}}^{k_{n_i}} =1 = \lim_{i \ra \infty} {s_{n_i}}^{k_{n_i}} ,$$ then
$$\lim_{\ef} \zeta_{D^+ (z_n , r_n )} = \lim_{\ef} \zeta_{D^+ \pl z_n ,  s_n \pr} .$$
\end{prop}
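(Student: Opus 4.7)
The plan is to show that the two seminorms agree on every $f\in\hac$. Fix $f\in\hac$ with $f(0)=1$; the case in which $f$ has only finitely many zeros is immediate (for $\ef$-many $n$, $f$ has no zeros on $C(0,|z_n|)$, so both $\xi$-values are $1$), so assume $f$ has infinitely many zeros. After passing to a subset of $\ef$ on which $r_n,s_n<|z_n|$, Corollary~\ref{conillosis} reduces the equality to
$$\lim_{\ef}\xi_{D^+(z_n,r_n)}(f)=\lim_{\ef}\xi_{D^+(z_n,s_n)}(f).$$

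Without loss of generality $s_n\le r_n$ along $\ef$. Setting $N_n:=\cer(f,D^-(z_n,r_n))$, Lemma~\ref{correicion} together with the trivial bound $\xi\le 1$ yields
$$|\xi_{D^+(z_n,r_n)}(f)-\xi_{D^+(z_n,s_n)}(f)|\le 1-(s_n/r_n)^{N_n},$$
so the goal becomes $N_n\log(r_n/s_n)\to 0$ along $\ef$. The hypothesis $r_{n_i}^{k_{n_i}},s_{n_i}^{k_{n_i}}\to 1$ along $C_0$ forces $r_n,s_n\to 1$ and $k_n\log(r_n/s_n)\to 0$ along $\ef\cap C_0$. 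Hence it would suffice to find, for the fixed $f$, a constant $K=K(f)$ and a set in $\ef$ on which $N_n\le K\,k_n$: such a bound immediately gives $N_n\log(r_n/s_n)\le K\,k_n\log(r_n/s_n)\to 0$.

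The decisive step is producing this bound, and this is where the standing hypothesis on $\ef$ enters. Taking logarithms in the product, the hypothesis asserts that for every $C\in\ef$ the weighted sum
$$\sum_{\substack{m\in C\\m\ne n\\|z_m|=|z_n|}}k_m\log(1/|z_n-z_m|)$$
is bounded below by a positive constant for $\ef$-many $n$. Combined with the disk-separation condition $|z_n-z_m|\ge r_n$, this yields a lower bound on the weighted neighbor count $\sum k_m$ of order $1/\log(1/r_n)$. On the other hand, the product expansion $\|f\|=1/\prod_i R_i^{m_i}$ bounds the total number $m_n\ge N_n$ of zeros of $f$ on $C(0,|z_n|)$ in terms of $\log(1/|z_n|)$ and $\|f\|$. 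Playing these two estimates against $k_n\log(1/r_n)\to 0$ (on $C_0$) should deliver the required bound $N_n=O(k_n)$ along $\ef$.

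The principal obstacle is precisely this last step: transferring the hypothesis, which is a structural condition on $(\ze,\kai)$ relative to $\ef$, into an effective control on the zeros of an \emph{arbitrary} $f\in\hac$ inside the disks $D^-(z_n,r_n)$. The preceding reductions, by contrast, are routine applications of Corollary~\ref{conillosis} and Lemma~\ref{correicion}.
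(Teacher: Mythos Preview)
Your reduction via Corollary~\ref{conillosis} and the sandwich from Lemma~\ref{correicion} is fine, but the strategy collapses at the step you yourself flag as decisive: the bound $N_n\le K\,k_n$ on a set in $\ef$ is \emph{not} available for an arbitrary $f\in\hac$, and your sketch does not produce it. The hypothesis constrains the weighted geometry of the \emph{sequence points} $z_m$, whereas $N_n$ counts zeros of an unrelated $f$; the two quantities are not linked. The norm estimate you invoke only gives $N_n\log(1/|z_n|)\le\log\|f\|$, which, since $|z_n|\to 1$, places no bound on $N_n/k_n$. Indeed, Proposition~\ref{gatillo} lets one manufacture $f$ with $N_n$ as large as any prescribed sequence (subject only to $\prod|z_n|^{N_n}>0$), so a uniform $N_n=O(k_n)$ is simply false.

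The paper's proof avoids this by a dichotomy rather than a difference estimate. One first picks $(l_n)$ with $l_n\to\infty$ on $C_0$ and $s_n^{k_nl_n}\to 1/2$, then writes, for $n\in C_0$,
\[
s_n^{\cer_n}\lambda_n\mu_n\ \le\ \xi_{D^+(z_n,s_n)}(f)\ \le\ \xi_{D^+(z_n,r_n)}(f)\ =\ r_n^{\cer_n}\lambda_n\mu_n,
\]
where $\cer_n:=\cer(f,D^-(z_n,r_n))$, $\lambda_n:=\prod_{|z_m|=|z_n|,\,m\ne n}|z_n-z_m|^{\cer_m}$, and $\mu_n$ collects the remaining factors (both independent of the radii). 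Set $\alpha:=\lim_\ef \cer_n/(k_nl_n)$. If $\alpha=0$ then $s_n^{\cer_n}\to 1$ along $\ef$, and the sandwich gives equality of the $\xi$-limits. If $\alpha>0$ there is $A\in\ef$, $A\subset C_0$, and $\beta>0$ with $\cer_m\ge\beta k_ml_m$ for all $m\in A$; putting $L_n:=\min\{l_m:m\in A,\ |z_m|=|z_n|\}$ one gets
\[
\lambda_n\ \le\ \Bigl(\prod_{\substack{|z_m|=|z_n|\\ m\in A,\ m\ne n}}|z_n-z_m|^{k_m}\Bigr)^{\beta L_n},
\]
and now the standing hypothesis (applied with $C=A$) bounds the inner product by some $M<1$ on a set in $\ef$, while $L_n\to\infty$; hence $\lambda_n\to 0$ and \emph{both} $\xi$-limits vanish. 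Thus the hypothesis is used not to cap $N_n$ but to drive $\lambda_n$ to zero when the zero counts are large across neighboring indices---precisely the mechanism your outline is missing.
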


\begin{rem}\label{xuxo}
Note that in Proposition~\ref{lucile}, if  $\ze  $ is regular and $\kai  $ belongs to $\com_{\ef} (\ze)$, then the seminorm $\phi := \lim_{\ef} \zeta_{D^+ (z_n , r_n )}$    satisfies $\zeta_{\ze, \ef}^{\kai, 1} \le \phi$. In Example~\ref{sandaliuca}, we will see that the equality does not hold in general.
\end{rem}

\begin{rem}\label{susichon}
In Example~\ref{sandaliuca}, we will also see that a weaker assumption in Proposition~\ref{lucile} such as that $\lim_{\ef} {r_n}^{k_n} =1 = \lim_{\ef} {s_n}^{k_n}$ does not imply that
$\lim_{\ef} \zeta_{D^+ (z_n , r_n )} = \lim_{\ef} \zeta_{D^+ \pl z_n ,  s_n \pr} $.
\end{rem}

\begin{proof}[Proof of Proposition~\ref{lucile}]
Since we are dealing with an ultrafilter, we can assume without loss of generality that  $0  < s_n \le r_n $ for all $n \in C_0$. It is clear that   there exists
 a sequence $(l_n) $ in $\en$ with $\lim_{n \ra \infty} l_n = + \infty$ such that $ \lim_{\substack{n  \in C_0 \\ n \ra \infty}} {s_n}^{k_n l_n}  = 1/2$.

Take $f \in \hac$. We have that, if $\cer_n = \cer \pl D^- \pl z_n, r_n \pr \pr$ and
$$\lambda_n   := \prod_{\substack{ \va z_m \vb = \va z_n \vb  \\ m \neq n}} \va z_n - z_m \vb^{\cer_m} $$ and
$$\mu_n   := \prod_{\substack{ z \in \cer \pl C \pl 0, \va z_n \vb \pr \pr \\ \va   z - z_m \vb \ge  r_m \forall m}} \va z_n - z \vb $$
for  $n \in C_0$, then
\begin{equation}\label{gervasiafdezgtez}
{s_n}^{\cer_n} \lambda_n \mu_n \le \xi_{D^+ (z_n , s_n)} (f)
 \le \xi_{D^+ (z_n , r_n)} (f) = {r_n}^{\cer_n} \lambda_n \mu_n .
 \end{equation}
Let $\alpha := \lim_{\ef}  \cer_n / (k_n l_n)  $.
We easily see that, if $\alpha =0$,
then $\lim_{\ef} {s_n}^{\cer_n} =1$ and, taking limits in Equation~\ref{gervasiafdezgtez}, $\lim_{\ef} \xi_{D^+ (z_n , s_n)} (f) = \lim_{\ef} \xi_{D^+ (z_n , r_n)} (f)$.

On the other hand,   if $ 0 < \alpha \le + \infty$, then there exist $A \in \ef$ with $A \subset C_0$ and $\beta >0$ such that
$\cer_n \ge   \beta  k_n l_n$ for all $n \in A$. Next, for  $n \in A$ we define
$$L_n := \min \tl l_m :  m \in A , \va z_m \vb = \va z_n \vb \tr ,$$ and obtain
  $$\lambda_n \le \prod_{\substack{ \va z_m \vb = \va z_n \vb   \\m \in A \\ m \neq n}} \va z_n - z_m \vb^{\beta k_m L_n } = \pl \prod_{\substack{ \va z_m \vb = \va z_n \vb   \\m \in A \\ m \neq n}} \va z_n - z_m \vb^{k_m} \pr^{\beta  L_n } .$$

 Since $\lim_{n \ra \infty} l_n = +\infty$, $\lim_{\substack{ n \in A \\ n \ra \infty }} L_n = + \infty$. Also, by hypothesis, there exist $M < 1$ and $A' \in \ef$ with $A' \subset A$ and
 $$ \prod_{\substack{ \va z_m \vb = \va z_n \vb   \\m \in A \\ m \neq n}} \va z_n - z_m \vb^{k_m} \le M$$ for all $n \in A'$.
 This gives $\lim_{\substack{   n \in A' \\ n \ra \infty}} \lambda_n =0$, and consequently $\lim_{\ef}   \lambda_n =0$.

  Finally, it follows from Equation~\ref{gervasiafdezgtez} that $$\lim_{\ef} \xi_{D^+ (z_n , r_n)} (f) =0 = \lim_{\ef} \xi_{D^+ (z_n , s_n)} (f) =0 ,$$ and we are done.
\end{proof}

We next give an example where Proposition~\ref{lucile} can be applied.

\begin{ex}\label{noterrias}
Let $\ze$ be a sequence in $\de$ with $\prod_{n=1}^{\infty} \va z_n \vb >0$. Let $R_1 < R_2 < \cdots $ be the absolute values
of the $z_n$ and, for each $i$, suppose that there are $M_i \ge 2$ points $z_n$ of absolute value $R_i$, and that $\lim_{i \ra \infty} M_i = + \infty$. Suppose also that there exists
 $M \in \va \ka^{\times} \vb \cap (0,1)$  such that, for all $i \in \en$,
$\va z_n - z_m \vb = \sqrt[M_i -1]{M} \in (0,R_i)$ whenever $\va z_n \vb = R_i = \va z_m \vb$, $n \neq m$.

Fix a nonprincipal ultrafilter $\af$ in $\en$ and  consider the family $\mathfrak{F}$
 of the complements of all sets $D$ in $\en$ with the property
that
$$\lim_{\af} \frac{\ca (\tl z_n : n \in D \tr  \cap C(0, R_i))}{M_i} =0.$$
It is easy to check that
$\mathfrak{F}$ is a filter in $\en$, and that any ultrafilter $\ef$ containing $\mathfrak{F}$ satisfies the conditions of Proposition~\ref{lucile} for $k_n =1$ for all $n$. Thus, if   $r_n := \sqrt[M_i -1]{M}$ for each $n$ with $\va z_n \vb = R_i$, then $\lim_{\ef} \zeta_{D^+ (z_n , r_n )} = \lim_{\ef} \zeta_{D^+ \pl z_n ,  s_n \pr} $ for any sequence $(s_n)$ such that $\lim_{\ef} s_n =1$ and $s_n \le r_n$ for all $n$.
\end{ex}

\begin{prop}\label{gertrudisrivota}
Let $\ze$ be a   sequence in $\de$ with
$\lim_{n \ra \infty} \va z_n \vb =1$,
and let $\ef$ be a nonprincipal ultrafilter in $\en$.
Suppose that $ \pl D^- \pl z_n, r_n \pr \pr$ is a sequence of pairwise disjoint open disks.

If $\ze$ is  not regular with respect to $ \ef$, then   $$\lim_{\ef} \zeta_{ D^+ \pl z_n, r_n \pr} =   \lim_{\ef} \delta_{z_n} .$$
\end{prop}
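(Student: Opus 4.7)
The plan is to verify that $\lim_{\ef} \zeta_{D^+ \pl z_n, r_n \pr} (f) = \lim_{\ef} \va f (z_n) \vb$ for every $f \in \hac$. Since $z_n \in D^+ \pl z_n, r_n \pr$, the inequality $\zeta_{D^+ \pl z_n, r_n \pr} (f) \ge \va f (z_n) \vb$ holds trivially, so the task reduces to proving $\lim_{\ef} \pl \zeta_{D^+ \pl z_n, r_n \pr} (f) - \va f (z_n) \vb \pr = 0$. Using the multiplicativity of both $\zeta_{D^+ \pl z_n, r_n \pr}$ and $\delta_{z_n}$ on $\hac$, together with the factorization of $f$ in terms of its zeros $\pl w_j \pr$ in $\de$, one writes
$$ \va f (z_n) \vb = \vc f \vd \prod_{j} \va z_n - w_j \vb \qquad \text{and} \qquad \zeta_{D^+ \pl z_n, r_n \pr} (f) = \vc f \vd \prod_j \max \pl r_n , \va z_n - w_j \vb \pr , $$
the infinite products being convergent because $f \in \hac$ forces $\prod_j \va w_j \vb > 0$.

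Two ingredients will drive the argument. The \emph{ultrametric identity}: if a zero $w$ of $f$ lies in $D^- \pl z_m, r_m \pr$ with $m \ne n$, then pairwise disjointness of the open disks gives $\va w - z_m \vb < r_m \le \va z_n - z_m \vb$, and the strong triangle inequality forces $\va z_n - w \vb = \va z_n - z_m \vb$. The \emph{non-regularity consequence}: for every $C \in \ef$ one has $\lim_{\ef} \prod_{m \in C, m \ne n} \va z_n - z_m \vb = 0$, since otherwise the product would stay $\ge \rho / 2$ for some $\rho > 0$ on a set in $\ef$ contained in $C$, and restricting the index set of the product to that smaller set only enlarges it (each factor being $\le 1$), contradicting the failure of regularity of the restricted sequence.

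Splitting the zeros of $f$ into those lying in $D^- \pl z_n, r_n \pr$ (counted by $k_n := \cer \pl f, D^- \pl z_n, r_n \pr \pr$) and those outside, the two displayed formulas yield
$$ \zeta_{D^+ \pl z_n, r_n \pr} (f) - \va f (z_n) \vb \le \vc f \vd \cdot r_n^{k_n} \prod_{w_j \notin D^- \pl z_n, r_n \pr} \va z_n - w_j \vb , $$
and applying the ultrametric identity (noting that factors $\le 1$ with exponents $\ge 1$ only shrink the product, and that dropping the zeros lying outside every $D^- \pl z_m, r_m \pr$ only enlarges it) bounds the right-hand product above by $\prod_{m \in S_f \setminus \tl n \tr} \va z_n - z_m \vb$, where $S_f := \tl m \in \en : k_m \ge 1 \tr$. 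If $S_f \notin \ef$ then $\en \setminus S_f \in \ef$ and on this set $k_n = 0$, forcing $\zeta_{D^+ \pl z_n, r_n \pr} (f) = \va f (z_n) \vb$ directly from the two displayed formulas; if instead $S_f \in \ef$, the non-regularity consequence applied with $C = S_f$ drives the bound above to $0$ along $\ef$. The main technical hurdle will be justifying the factorization-based formulas for arbitrary $f \in \hac$ and handling the degenerate case $f (z_n) = 0$, both of which are addressed by working with the difference $\zeta_{D^+ \pl z_n, r_n \pr} (f) - \va f (z_n) \vb$ rather than forming a ratio.
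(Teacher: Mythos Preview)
Your proof is correct and follows essentially the same route as the paper's. Both arguments split on whether the set $S_f$ of indices $n$ for which $D^-(z_n,r_n)$ meets the zero set of $f$ lies in $\ef$, dispose of the trivial case by noting $\zeta_{D^+(z_n,r_n)}(f)=\va f(z_n)\vb$ when $k_n=0$, and in the remaining case pass from zeros of $f$ to the centres $z_m$ via the ultrametric identity and invoke non-regularity. The only noteworthy difference is packaging: you work with the global product formulas $\va f(z_n)\vb=\vc f\vd\prod_j\va z_n-w_j\vb$ and $\zeta_{D^+(z_n,r_n)}(f)=\vc f\vd\prod_j\max(r_n,\va z_n-w_j\vb)$, whereas the paper channels the same information through the quantity $\xi_{D^+(z_n,r_n)}(f)$ (which records only zeros on $C(0,\va z_n\vb)$) together with Corollary~\ref{conillosis}, and then needs the auxiliary observation $\prod_{n\in C'}\va z_n\vb>0$ to reconcile the same-absolute-value product with the full regularity product. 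Your formulation sidesteps that extra step by taking the product over \emph{all} $m\in S_f$ from the outset, and your explicit ``non-regularity consequence'' (passing from $\inf=0$ over every $C\in\ef$ to $\lim_{\ef}=0$) makes transparent a step the paper compresses.

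One small remark: your phrase ``factorization of $f$ in terms of its zeros'' should be read as shorthand for the valuation-theoretic identities coming from the Newton polygon (Lemma~\ref{sinaltaboces} in the paper), not as an actual Weierstrass-type product; the displayed formulas are correct, but in the nonarchimedean setting $f$ need not literally factor as an infinite product.
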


\begin{proof}
For $f \in \hac $ with $\va f(0) \vb =1 $ fixed, let $C $ be the set of all $n$  such that $D^- (z_n, r_n)$ contains no zeros of $f$. Suppose first that $C$  belongs to $\ef$. Then $\va f \vb$ takes a constant value on each disk
$D^- \pl z_n, r_n \pr$, for $n \in C$, and this same value is taken at each $z_n  $, $n \in C$.
It is now straightforward to see that $\lim_{\ef} \delta_{z_n} (f) = \lim_{\ef} \zeta_{D^- \pl z_n, r_n \pr} (f)$. Suppose next that $C \notin \ef$, and take any   $C'  \in \ef$ with $C' \subset \en \setminus C$. Since $\prod_{n \in C'} \va z_n \vb \ge 1/\vc f \vd >0$ and $(z_n)_{n \in C'}$ is not regular, we can assume that, for all $n \in C'$, there exists at least one $m \in C'$, $m \neq n$, with $\va z_m \vb = \va z_n \vb$.
Then
fix a zero $u_n$ of $f$ in each $D^- \pl z_n, r_n \pr$ for all $n \in C'$. It is clear that, for $n \in C'$,
$$\xi_{D^+ \pl z_n, r_n \pr} (f) \le \prod_{\substack{\va u_m \vb = \va z_n \vb \\   m \in   C' \\ m \neq n}} \va z_n - u_m \vb = \prod_{\substack{\va z_m \vb = \va z_n \vb \\ m \in C' \\ m \neq n}} \va z_n - z_m \vb .$$
Since $\pl z_n \pr_{C'}$ is not regular, $\inf_{n \in C'} \xi_{D^+ \pl z_n, r_n \pr} (f) =0$, and we easily deduce from Corollary~\ref{conillosis} that $\lim_{\ef} \zeta_{D^+ \pl z_n, r_n \pr} (f) = \lim_{\ef} \xi_{D^+ \pl z_n, r_n \pr} (f) =0$. Finally, since $\delta_{z_n} \le \zeta_{D^+ \pl z_n, r_n \pr}$, we
 conclude that  $\lim_{\ef} \delta_{z_n} (f) =0$.
\end{proof}

\begin{ex}\label{larifa}
Let $\ze$ be a   sequence in $\de$ with $\prod_{n =1}^{\infty} \va z_n \vb >0$. Let $\tl R_i :i \in \en \tr$ be the set of the absolute values of all $z_n$, and suppose   that       $S_i := \va z_n - z_m \vb$ is constant for all $n, m \in \en$ with $\va z_n \vb =R_i= \va z_m \vb$, $i \in \en$.
Suppose also that $\ef$ is a nonprincipal ultrafilter in
$\en$ such that $\ze $ is not regular with respect to $\ef$.   Then
Proposition~\ref{gertrudisrivota} gives us
\begin{equation}\label{resorte}
\lim_{\ef} \delta_{z_n} = \lim_{\ef} \zeta_{D^+ (z_n, S_i)} .
\end{equation}
Obviously, we can define a map $\pi : \en \ra \en$ associating each $n$ with the number $\pi (n)$ with
 $\va z_n \vb = R_{\pi (n)}$. The meaning of $\pi (A)$ for $A \subset \en$ is clear, as well as that of
 $\pi (\ef)$. In fact, $\pi (\ef)$ is also
 a nonprincipal ultrafilter in $\en$. Now, it is easy to check that, by Equality~\ref{resorte}, if each $w_k $ is any point in $D^+ \pl z_n, S_{\pi(n)} \pr$,
 then
 $$\lim_{\ef} \delta_{z_n}   = \lim_{\pi(\ef)} \zeta_{D^+ \pl w_k, S_k \pr} .$$
\end{ex}

The following example
shows that the result in Proposition~\ref{lucile} cannot be sharpened (see  Remarks~\ref{xuxo} and~\ref{susichon}).

\begin{ex}\label{sandaliuca}
We consider $M$, $(R_i)$,  $(M_i)$, $\ze$ and $\ef$ to be the same as in Example~\ref{noterrias}. Suppose that $(N_k)$ is a sequence in $(0,1)$ with $\prod_{k=1}^{\infty} N_k >0$, where $N_1:= M^2$.
Clearly, we can find a sequence
$(A_k)$  in $\ef$ with $A_1 = \en$ and  $A_{k+1} \subsetneq A_k$ for all $k$ such that each $A_k$ satisfies the following property:
Given $i \in \en$, if  the cardinal $K_k^i$ of the $n $ in $A_k$ with  $z_n \in  C(0,  R_i)$ is not $0$, then
 $K_k^i \ge 2$ and
$$\pl \sqrt[M_i -1]{M} \pr^{K_k^i} \ge N_k .$$

 Now select  sequences $(r_n)$ and $(\delta_n)$ in $(0,1)$ with $\lim_{n \ra \infty} {r_n} =1$ and $\lim_{n \ra \infty} \delta_n =0$, and such that $0 < \delta_n < r_n \le \sqrt[M_i -1]{M}$ whenever $\va z_n \vb = R_i$. Next
consider a function $f \in \hac$ having exactly  $\cer_n$ simple zeros  in each $D^+ (z_n, \delta_n)$, where $\cer_n := \max \tl k \in \en : n \in A_k \tr$, and no other zeros in the corresponding $C(0, R_i)$ (see Proposition~\ref{gatillo}). Note that, for $i \in \en$, $\sum_{\va z_n \vb = R_i} \cer_n \le \sum_{k=1}^{\infty} K_k^i$. Consequently, for each $n \in \en$ with $\va z_n \vb =R_i$,
$\xi_{D^+ (z_n , r_n)} (f) = {r_n}^{\cer_n} \lambda_n $, where
$$\lambda_n   := \prod_{\substack{ \va z_m \vb = \va z_n \vb  \\ m \neq n}} \va z_n - z_m \vb^{\cer_m} \ge  \pl \sqrt[M_i -1]{M} \pr^{K_1^i + K_2^i +  K_3^i + \cdots } \ge \prod_{k=1}^{\infty} N_k   .$$

Note also that there exists a sequence $(l_n)$ in $\en$ with $\lim_{n \ra \infty} {r_n}^{l_n} =1/2$, and this sequence satisfies $\lim_{n \ra \infty} l_n = + \infty$. As in the proof of Proposition~\ref{lucile} (with $k_n =1$ for all $n$), we   see that, if $\lim_{\ef} \cer_n /l_n >0$,
 then $\lim_{\ef} \lambda_n =0$.
   Since this is not the case, we deduce that
 $\lim_{\ef} \cer_n /l_n =0$, and consequently that $\lim_{\ef} {r_n}^{\cer_n} =1$. By Corollary~\ref{conillosis}, taking into account that $\xi_{D^+ (z_n, r_n)} (f) \ge  {r_n}^{\cer_n} \prod_{k=1}^{\infty} N_k$ for all $n$,  we conclude   that $\lim_{\ef} \zeta_{D^+ (z_n, r_n)} (f) \neq 0$. On the other hand, since $A_k \in \ef$ for all $k$,  $\lim_{\ef} \cer_n = + \infty$, which implies that
for all $s \in (0,1)$, $\lim_{\ef} s^{\cer_n}  =0$ and consequently, $ \zeta_{\ze, \ef}^{\mathbf{1}, s} (f) = 0$.
 Thus, $\zeta_{\ze, \ef}^{\mathbf{1}, 1} \neq \lim_{\ef} \zeta_{D^+ (z_n, r_n)}$
 (see Remark~\ref{xuxo}).

On the other hand, Example~\ref{noterrias} tells us that, if the sequence $(r_n)$ is taken as above, then $\lim_{\ef} \zeta_{D^+ (z_n, r_n)} = \lim \zeta_{D^+ (z_n, s_n)}$, where $s_n = \sqrt[M_i -1]{M}$ whenever $\va z_n \vb = R_i$. Now, it is easy to see that $\lim \zeta_{D^+ (z_n, s_n)} = \lim_{\pi (\ef)} \zeta_{D^+ \pl w_i, \sqrt[M_i -1]{M} \pr}$, where each $w_i $ belongs to $D^+ \pl z_n , \sqrt[M_i -1]{M} \pr$ and $\pi$ is defined as in Example~\ref{larifa}. In other words, if $\we = (w_i)$ and $\mathbf{M} = \pl M_i -1 \pr$, then
$\lim_{\ef} \zeta_{D^+ (z_n, r_n)} = \zeta_{\we, \pi (\ef)}^{\mathbf{M}, M}$.

 We can prove that, for the function $f$ above,  if $t \in \pl 0, \zeta_{\we, \pi (\ef)}^{\mathbf{M}, M} (f) \pr$, then there is a sequence $(t_n)$ such that $\lim_{\ef} \zeta_{D^+ (z_n , t_n)} (f) = t$ (for this fact, see the proof of Theorem~\ref{liberban} in Section~\ref{davai}). It is also clear that $t_n \le \sqrt[M_i -1]{M}$ whenever $\va z_n \vb = R_i$. On the one hand, this implies that, if we put  $\phi_t := \lim_{\ef} \zeta_{D^+ (z_n , t_n)}$, then
 $$  \zeta_{\ze, \ef}^{\mathbf{1}, 1} \le   \phi_{t'} \le \phi_{t''} \le \zeta_{\we, \pi (\ef)}^{\mathbf{M}, M} $$
 and $\phi_{t'}(f) < \phi_{t''} (f)$ whenever $t' < {t''}$. This means by Proposition~\ref{lucile}  that there is no
set $\tl n_k : k \in \en \tr \in \ef$ such that $\lim_{k \ra \infty} t_{n_k} =1$. Since obviously $\lim_{\ef} t_n =1$, we see that Remark~\ref{susichon} is correct.
\end{ex}

\begin{proof}[Proof of Theorem~\ref{nadabouche}]
It is obvious that each $\delta_{\we, \af}$ belongs to $\mil$, because it can be written as $\lim_{\af} \varphi_{D^+ \pl w_m, 1/(2m) \pr}$. On the other hand, we take $\varphi = \lim_{\ef} \varphi_{D^+ \pl z_n , r_n \pr}$, and assume that $\lim_{\ef} r_n >0$. By Corollary~\ref{byemarcal}, we can assume that all the disks $D^+ \pl z_n , r_n \pr$ are pairwise disjoint and that  $r_n < \va z_n \vb$ for every $n $. We see that that the result follows from Proposition~\ref{gertrudisrivota} if $\ze$ is not regular with respect to $\ef$.

More in general,
by Corollary~\ref{pernia}, each $r_n$ can be taken in $\va \ka^{\times} \vb$. Now, for each $n \in \en$, pick $N_n \in \en$ with $   N_n \ge n +1$ and such that $\lim_{n \ra \infty} {r_n}^{N_n} =0$. Also, consider $A_n := \tl w_1^n, \ldots, w_{N_n}^n \tr \subset C \pl z_n , r_n \pr$ with $\va w_i^n - w_j^n \vb = r_n$ whenever $i \neq j$. We clearly see that all the $A_n$ can be taken in such a way that $D^+ (z, r_n) \cap D^+ (w, r_m) = \emptyset$ whenever $z \in A_n$ and $w \in A_m$. Using the lexicographic  order, define a sequence $\we$ with all the points in $\bigcup_{n =1}^{\infty} A_n$   (that is, if $m < m'$, then $w_m = w_i^n$ and $w_{m'} = w_j^{n'}$ with $n \le n'$ and, for $n =n'$, $i < j$).

Next consider the family $\mathfrak{F}$
 of the complements of all sets $D$ in $\en$ with the property
that
$$\lim_{\ef} \frac{\ca (\tl w_m : m \in D \tr  \cap A_n)}{N_n} =0.$$
It is a routine matter to check that
$\mathfrak{F}$ is a filter in $\en$ and that, given an ultrafilter $\af$ containing $\mathfrak{F}$, $\we$ is not regular with respect to $\af$.

It is also clear that,  if $s_m := r_n$ whenever $w_m \in A_n$, then  $\varphi =  \lim_{\af} \zeta_{D^+ \pl w_m , s_m \pr}$.
By Proposition~\ref{gertrudisrivota},  $\varphi = \lim_{\af} \delta_{w_m}$.
\end{proof}

We easily see that a slight modification of the above proof  shows that each $\delta_{\ze, \ef}$ with $\ze$ regular with respect to $\ef$ can be written as $\delta_{\we, \af}$ with $\we$ {\em not} regular with respect to $\af$.

\section{Kernels of seminorms}\label{davai}

In this section we prove all the results stated in Section~\ref{noporu}, but Theorem~\ref{zerolari}, which is proved in Section~\ref{ceropombo}.

\begin{proof}[Proof of Theorem~\ref{liberban}]
 Suppose that  $\varphi = \lim_{\ef} \zeta_{D^+ \pl z_n , s_n \pr}$, where $s_n < \va z_n \vb$ for all $n$. By Corollary~\ref{conillosis}, $ \lim_{\ef} \xi_{D^+ (z_n, s_n)} (f) =0$, so $\xi_{D^+ (z_n, s_n)} (f) <  r/ \pl 2 \vc f \vd \pr$  for all $n$ in some $C_0 \in \ef$.

Fix $n \in C_0$ and suppose that
 $w_1, \ldots, w_k$ are the zeros
  of $f$ in $C \pl 0, \va z_n \vb \pr$.
 It is clear that  the function
$
F_n: \ql 0, \va z_n \vb \qr \ra \er$  given by $
s  \mapsto  \prod_{j=1}^{k} \max \tl s , \va z_n - w_j \vb \tr ,
$
is continuous and increasing. Also  $F_n \pl \va z_n \vb \pr  = {\va z_n \vb}^{\cer \pl f,C \pl 0, \va z_n \vb \pr \pr }$,  and, consequently
$\lim_{n \in C_0} F_n \pl \va z_n \vb \pr =1$, and there exists $n_r \in C_0$ such that $F_n \pl \va z_n \vb \pr > r/ \vc f \vd$ for all $n \in C_0 $ with $n \ge n_r$.  Since $F_n (0) \le \xi_{D^+ (z_n, s_n)} (f) < r/ \vc f \vd$,    for $n \in C_0$ with $n \ge n_r$, we can find  $r_n \in (0 , \va z_n \vb)$
 such that
$$ \frac{r}{\vc f \vd} = F_n (r_n) = {r_n}^{\cer \pl f,D^+ \pl z_n, r_n \pr \pr} \prod_{ \va z_n - w_j \vb > r_n} \va z_n - w_j \vb .$$
Obviously
$\xi_{D^+ (z_n , r_n)} (f) = r/ \vc f \vd$
for all $n $. Consequently, if we define $\psi:= \lim_{\ef} \zeta_{D^+ (z_n , r_n)} $,
then by Corollary~\ref{conillosis},  $\psi (f) =r$.

 Note that any two of the above disks $D^+ \pl z_n , r_n \pr$ are either equal or disjoint. For each $k \in C_0$, we set
$n_k := \min \tl n : D^+ \pl z_n, r_n \pr = D^+ \pl z_k, r_k \pr \tr$, in such a way that the disks $D^+ \pl z_{n_k}, r_{n_k} \pr$ are pairwise disjoint.   Put $v_k := z_{n_k}$ and $t_k := r_{n_k}$ for all $k$.
Then define a new ultrafilter $\af$ in $\en$: A set $C \subset \en$ belongs to $\af$ if  the set
of all $ n \in C_0$ such that $
D^+ \pl z_n, r_n \pr = D^+ \pl v_k, t_k \pr$, for some $ k \in C  $, belongs to $\ef$. It is a routine matter to check that $\psi = \lim_{\af} \zeta_{D^+ (v_k , t_k)}$. On the other hand, by the definition of $r_n$, we easily see that each $\cer \pl f,D^+ \pl z_n, r_n \pr \pr \ge 1$, which implies that, for $k \in \en $ fixed,
$$\prod_{\substack{\va v_l \vb = \va v_k \vb \\ l \neq k}} \va v_k - v_l \vb \ge  \prod_{ \va z_{n_k} - w_m \vb > r_{n_k}} \va z_{n_k} - w_m \vb \ge \frac{r}{\vc f \vd} .$$ The fact that $\ve  $ is regular with respect to $\af$ follows easily and, consequently, $\psi$ belongs to $\mim$.

 On the other hand, by Proposition~\ref{gatillo},
we can find $g \in \hac$ with as many zeros in each $D^+ (z_n, r_n)$ as we need so that
$\psi (g) = 0$. This shows that $\psi$ is not a norm.
\end{proof}

\begin{prop}\label{diegosoper}
Let $\ze $ be a regular sequence with respect to $\ef \in \beta \en \setminus \en  $, and let $\kai  \in \com_{\ef}  (\ze)$.
Then there exists $f \in \hac$   with $\vc f  \vd =1$  such that
$$0 < \zeta_{\ze, \ef}^{\kai, r} (f)   \le r$$
for all $r \in (0,1)$
and $\zeta_{\ze, \ef}^{\kai, r} (f) < \zeta_{\ze, \ef}^{\kai, s} (f)$ if $0 < r < s < 1$.
\end{prop}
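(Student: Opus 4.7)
The plan is to construct $f \in \hac$ with $\vc f \vd = 1$ having precisely $k_n$ simple zeros clustered very near each $z_n$ (for $n$ in a $\kai$--compatible subset of $\en$), so that in the definition of $\xi_{D^+ \pl z_n , \sqrt[k_n]{r} \pr} (f)$ the leading factor $\pl \sqrt[k_n]{r} \pr^{k_n}$ collapses to $r$ while the remaining factors stay bounded between some $M > 0$ and $1$. Fix $C_{\kai} \in \ef$ and $M \in (0,1]$ with $\inf_{n \in C_{\kai}} \prod_{m \in C_{\kai},\, m \neq n} \va z_n - z_m \vb^{k_m} \ge M$. Then each single factor $\va z_n - z_m \vb^{k_m} \ge M$ forces $\va z_n - z_m \vb \ge M^{1/k_m} \ge M$ for all distinct $n, m \in C_{\kai}$; moreover, since $\va z_n \vb \to 1$, for fixed $n \in C_{\kai}$ one has $\va z_n - z_m \vb = \va z_m \vb$ for $m$ large in $C_{\kai}$, whence $\kai$--compatibility also yields $c := \prod_{n \in C_{\kai}} \va z_n \vb^{k_n} > 0$.

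For each $n \in C_{\kai}$ choose $k_n$ distinct points $z_n^1, \ldots, z_n^{k_n} \in C(0, \va z_n \vb)$ with $\va z_n^j - z_n \vb < \delta_n := \min (M/2, 1/2^n)$, and radii $\epsilon_n^j$ small enough that all the disks $D^+ \pl z_n^j, \epsilon_n^j \pr$ are pairwise disjoint (possible because $\va z_n^j - z_m^{j'} \vb = \va z_n - z_m \vb \ge M$ whenever $m \neq n$). Enumerate the family $(z_n^j)$ as a sequence in $\de$; the product of its absolute values is $c > 0$, so Proposition~\ref{gatillo} produces $f_0 \in \hac$ with $f_0 (0) = 1$, $\vc f_0 \vd \in \va \ka^{\times} \vb$, having exactly one simple zero $w_n^j \in D^+ \pl z_n^j, \epsilon_n^j \pr$ and no other zero of absolute value $\va z_n \vb$ for any $n \in C_{\kai}$. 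Normalize $f := f_0/\lambda$ with $\va \lambda \vb = \vc f_0 \vd$, so that $\vc f \vd = 1$.

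Now fix $r \in (0,1)$, set $s_n := \sqrt[k_n]{r}$, and let $A_r := \tl n \in C_{\kai} : 1/2^n \le s_n \tr$; this set contains $\tl n \in C_{\kai} : n \ge \log_2 (1/r) \tr$ and is therefore in $\ef$. For $n \in A_r$, each $w_n^j$ satisfies $\va w_n^j - z_n \vb < 1/2^n \le s_n$, so all $k_n$ such zeros lie in $D^+ \pl z_n, s_n \pr$; and for $m \in C_{\kai}$, $m \neq n$, with $\va z_m \vb = \va z_n \vb$, the bound $\va w_m^{j'} - z_m \vb < M/2 \le \va z_n - z_m \vb$ and the ultrametric inequality yield $\va z_n - w_m^{j'} \vb = \va z_n - z_m \vb$. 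Summing zero counts,
\begin{equation*}
\xi_{D^+ \pl z_n, s_n \pr} (f) \;=\; r \cdot \prod_{\substack{m \in C_{\kai},\, m \neq n \\ \va z_m \vb = \va z_n \vb}} \max \pl s_n, \va z_n - z_m \vb \pr^{k_m},
\end{equation*}
each factor lying in $\ql \va z_n - z_m \vb^{k_m}, 1 \qr$. Hence the product is $\le 1$, and since restricting $\kai$--compatibility to $\va z_m \vb = \va z_n \vb$ only removes factors $\le 1$, it is also $\ge \prod_{m \in C_{\kai},\, m \neq n} \va z_n - z_m \vb^{k_m} \ge M$. Corollary~\ref{conillosis} together with $\vc f \vd = 1$ now gives $\zeta_{\ze, \ef}^{\kai, r} (f) = \lim_{\ef} \xi_{D^+ \pl z_n, s_n \pr} (f) \in \ql rM, r \qr$, so $0 < \zeta_{\ze, \ef}^{\kai, r} (f) \le r$. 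For $0 < r < r' < 1$, the factorwise inequality $\max \pl s_n, \va z_n - z_m \vb \pr \le \max \pl s_n', \va z_n - z_m \vb \pr$ (with $s_n' = \sqrt[k_n]{r'}$) gives $\xi_{D^+ \pl z_n, s_n' \pr} (f) \ge (r'/r) \cdot \xi_{D^+ \pl z_n, s_n \pr} (f)$ on $A_r \cap A_{r'} \in \ef$; passing to the limit, $\zeta_{\ze, \ef}^{\kai, r'} (f) \ge (r'/r) \, \zeta_{\ze, \ef}^{\kai, r} (f) > \zeta_{\ze, \ef}^{\kai, r} (f)$, which is the strict monotonicity. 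The delicate step is to make each $w_n^j$ close to $z_n$ \emph{uniformly in $r$}; the geometric choice $\delta_n < 1/2^n$ resolves this, because for any $r$ the inequality $1/2^n \le r^{1/k_n}$ holds cofinitely in $n$ regardless of the sequence $(k_n)$.
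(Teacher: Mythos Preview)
Your proof is correct and follows essentially the same strategy as the paper: build $f$ via Proposition~\ref{gatillo} with exactly $k_n$ simple zeros near each $z_n$ (for $n$ in a $\kai$-compatible set), then compute $\xi_{D^+(z_n,s_n)}(f)$ and invoke Corollary~\ref{conillosis}. Your $\max(s_n,|z_n-z_m|)^{k_m}$ formula is exactly the paper's expression $r_n^{T_n}\prod_{|z_m-z_n|>r_n}|z_n-z_m|^{k_m}$ rewritten. One cosmetic gap: you should require $\epsilon_n^j<\delta_n$ so that $|w_n^j-z_n|<\delta_n$ actually holds; as written, only $|z_n^j-z_n|<\delta_n$ is guaranteed.

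The only genuine difference is the monotonicity step. The paper introduces $\alpha:=\lim_{\ef} T_n/k_n\in[1,+\infty)$ and argues by contradiction that $\zeta^{\kai,r}(f)=\zeta^{\kai,s}(f)$ would force $r^{\alpha}=s^{\alpha}$. Your route is more direct and quantitative: from the factorwise bound you obtain $\zeta_{\ze,\ef}^{\kai,r'}(f)\ge(r'/r)\,\zeta_{\ze,\ef}^{\kai,r}(f)$, which together with $\zeta_{\ze,\ef}^{\kai,r}(f)\ge rM>0$ immediately gives strict monotonicity. This is cleaner and also yields the sharper lower bound $\zeta_{\ze,\ef}^{\kai,r}(f)\ge rM$ (versus the paper's $MT$ with $T=\inf_n r^{T_n/k_n}\le r$).
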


\begin{proof}
We consider $C \in \ef$ such that
$$M: = \inf_{n \in C}   \prod_{\substack{m \in C \\  m \neq n}} \va z_n - z_m \vb^{k_m} >0. $$
For $r \in (0,1)$ and $n \in C$, put $r_n := \sqrt[k_n]{r}$.  Consider  a sequence
    $(\delta_n)$  of positive numbers  converging to $0$
    with the property that
the disks $D^+ (z_n , \delta_n )$ are
pairwise disjoint.
   Then, since $\prod_{n \in C} \va z_n \vb^{k_n} >0$, we can use Proposition~\ref{gatillo} and take
$f \in \hac$  with $\vc f \vd =1$ and  $f (0) \neq 0$  having exactly $k_n$ simple zeros
in each $D^+ (z_n , \delta_n)$ whenever $n \in C$, and no other zeros in the circles $C(0, \va z_m \vb)$.

We put, for each $n \in C$, $T_n := \sum_{\substack{\va z_n - z_m \vb \le r_n \\ m \in C}} k_m $.
Note that if
$T := \inf_{n \in C} {r_n}^{T_n} =0$, then
$M    =0 $,
against our hypothesis. Thus $T >0$ and
$$\alpha := \lim_{\ef} \frac{T_n}{k_n} \in (1, +\infty).$$ On the other hand,
it is clear that, for every $n \in C$,
$$\xi_{D^+ (z_n, r_n)} (f) = {r_n}^{T_n} \prod_{ \substack{   \va z_m - z_n \vb > r_n \\ \va z_m \vb = \va z_n \vb \\ m \in C}} \va z_n - z_m \vb^{k_m}  $$
belongs to the interval
$\ql M  {r_n}^{T_n}  , {r_n}^{k_n}   \qr \subset \ql M T , r \qr$ and, by Corollary~\ref{conillosis}, $M T   \le \zeta_{\ze, \ef}^{\kai, r} (f) \le r  $.

Suppose next that $s \in (r,1)$ and $s_n := \sqrt[k_n]{s}$ for   $n \in C$.
As above,
$$\xi_{D^+ (z_n, s_n)} (f) = {s_n}^{S_n} \prod_{\substack{   \va z_m - z_n \vb > s_n \\ \va z_m \vb = \va z_n \vb \\ m \in C}} \va z_n - z_m \vb^{k_m}  $$
where $S_n := \sum_{\substack{\va z_n - z_m \vb \le s_n \\ m \in C}} k_m$.
Also, for all $n \in C$,
$$ {s_n}^{S_n }   \ge {s_n}^{T_n } \prod_{\substack{  s_n \ge \va z_m - z_n \vb > r_n \\ m \in C}} \va z_n - z_m \vb^{k_m}
\ge {r_n}^{T_n} \prod_{ \substack{  s_n \ge \va z_m - z_n \vb > r_n \\ m \in C}} \va z_n - z_m \vb^{k_m}  $$
and, consequently, the fact that $\zeta_{\ze, \ef}^{\kai, r} (f) = \zeta_{\ze, \ef}^{\kai, s} (f)$ implies that
$\lim_{\ef}   {s_n}^{T_n} = \lim_{\ef}   {r_n}^{T_n}$, that is, $s^{\alpha}  = r^{\alpha}$. We conclude that
$\zeta_{\ze, \ef}^{\kai, r} (f) < \zeta_{\ze, \ef}^{\kai, s} (f)$.
\end{proof}

\begin{rem}
In the proof of Proposition~\ref{diegosoper}, we see that if the set $C$ can be taken equal to $\en$, then
the same function $f$ makes the result hold for all $\ef \in \beta \en \setminus \en$ simultaneously.
\end{rem}

Prior to proving Theorem \ref{thesidubar}, we give the following lemma.

\begin{lem}\label{siatillo}
Let $\alpha : (0,1) \ra [0 , + \infty ]$ be an increasing function. If $r_0 \in (0,1)$, then there exist $r_1 >r_0$ and $M \in \er$ such that
$$
 \va r^{\alpha (\max \tl  r_0 , r \tr )} - {r_0}^{\alpha (\max \tl  r_0 , r \tr )} \vb \le
   M  \va r - r_0 \vb
$$
for every $r \in (r_0/2, r_1]$.
\end{lem}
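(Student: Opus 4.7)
The plan is to prove the bound separately on the two sub-intervals $(r_0/2,r_0]$ and $(r_0,r_1]$, in each case applying the mean value theorem to the power function $x\mapsto x^{a}$ for a suitably fixed exponent $a$. I shall use throughout the natural convention $t^{+\infty}=0$ for $t\in(0,1)$, which is needed for the left-hand side to make sense when $\alpha$ takes the value $+\infty$.

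On the left sub-interval $\max\{r_0,r\}=r_0$, so the exponent is the fixed number $a:=\alpha(r_0)\in[0,+\infty]$. The extreme values are settled by inspection: if $a=0$ both powers equal $1$ and the difference vanishes, and if $a=+\infty$ both powers vanish because $r,r_0\in(0,1)$. For $a\in(0,+\infty)$ the MVT applied to $x\mapsto x^{a}$ on $[r_0/2,r_0]$ yields $|r^{a}-r_0^{a}|\le K_1|r-r_0|$ with $K_1:=\sup_{x\in[r_0/2,r_0]} a\,x^{a-1}<\infty$, since the supremum is that of a continuous function on a compact interval contained in $(0,+\infty)$.

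For the right sub-interval I choose $r_1\in(r_0,1)$ according to the behaviour of $\alpha$ just above $r_0$. If $\alpha(r)=+\infty$ for every $r\in(r_0,1)$, then any $r_1\in(r_0,1)$ works, since for all such $r$ both $r^{\alpha(r)}$ and $r_0^{\alpha(r)}$ vanish. Otherwise fix $r_1\in(r_0,1)$ with $\alpha(r_1)<+\infty$; by monotonicity of $\alpha$ one has $\alpha(r)\in[\alpha(r_0),\alpha(r_1)]$ for every $r\in(r_0,r_1]$, and the MVT applied to $x\mapsto x^{\alpha(r)}$ on $[r_0,r]$ gives $|r^{\alpha(r)}-r_0^{\alpha(r)}|\le K_2|r-r_0|$ with $K_2:=\sup\{a\,x^{a-1}:x\in[r_0,r_1],\,a\in[0,\alpha(r_1)]\}$. (Note that when $\alpha(r_0)=+\infty$ this second sub-case cannot occur, so no incompatibility arises.)

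The only routine verification is that $K_2<\infty$, which is settled by splitting the exponent at $a=1$: for $a\ge 1$ one has $a\,x^{a-1}\le \alpha(r_1)$ (using $x\le r_1<1$ and $a-1\ge 0$), while for $a\in[0,1]$ one has $a\,x^{a-1}\le x^{a-1}\le 1/r_0$ (using $x\ge r_0$ and $a-1\le 0$). Setting $M:=\max\{K_1,K_2\}$ then yields the desired inequality on all of $(r_0/2,r_1]$. No step is genuinely delicate; the only point requiring care is the case split on the possible values of $\alpha$ at and immediately above $r_0$, so that the exponent appearing in $x^{\alpha(\max\{r_0,r\})}$ is always either finite and uniformly controlled or else forces both powers to vanish identically.
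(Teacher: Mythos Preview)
Your proof is correct and follows essentially the same approach as the paper: both arguments split into the cases $r\le r_0$ and $r>r_0$, dispose of the infinite-exponent case by the convention $t^{+\infty}=0$, and then apply the Mean Value Theorem to $x\mapsto x^{a}$ with a uniform bound on $a\,x^{a-1}$ obtained by separating $a\ge 1$ from $a<1$. The only cosmetic difference is that the paper chooses $r_1$ so that either $\alpha(r_1)<1$ or $\alpha(r)\ge 1$ on the whole right interval and writes down explicit constants, whereas you bound by a supremum over the rectangle $[r_0,r_1]\times[0,\alpha(r_1)]$ and then estimate that supremum; the content is the same.
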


\begin{proof}
Let $ \beta := \inf_{r >r_0} \alpha (r)$.
  If $\beta = + \infty$,   then $\alpha (r) = + \infty$ whenever $r \in (r_0,1)$, so
$r^{\alpha (r)} - {r_0}^{\alpha (r)}  =0$. If $\beta < + \infty$, we find $r_1 > r_0$ such that $\beta   \le \alpha (r_1 ) < + \infty$. By the Mean Value Theorem,
for each $r \in (r_0, r_1]$, there exists $c \in (r_0, r)$ with ${r}^{\alpha (r)} - {r_0}^{\alpha (r)} = \alpha (r) \ c^{\alpha (r) -1} \pl r - r_0 \pr$. Now, if
$\beta <1$, then $r_1$ can be taken with $a (r_1) <1$, giving $c^{\alpha (r) -1} \le {r_0}^{\beta -1} $ and ${r}^{\alpha (r)} - {r_0}^{\alpha (r)} \le \alpha (r_1) \ {r_0}^{\beta -1} \pl r - r_0 \pr$. On the other hand, if $\beta \ge 1$, then $c^{\alpha (r) -1} \le 1$ and
 ${r}^{\alpha (r)} - {r_0}^{\alpha (r)} \le \alpha (r_1)   \pl r - r_0 \pr $.

We next consider the case $0 < r < r_0$. First, if $\alpha (r_0) = + \infty$, then
${r_0}^{\alpha (r_0)} - r^{\alpha (r_0)}  =0$. On the other hand, if  $\alpha (r_0) < + \infty$, then  there exists $c \in (r, r_0)$ with
${r_0}^{\alpha (r_0)} - r^{\alpha (r_0)} = \alpha (r_0) \ c^{\alpha (r_0) -1} \pl r_0 - r \pr $.
This implies that, when $\alpha (r_0) \ge 1$,
$${r_0}^{\alpha (r_0)} - {r}^{\alpha (r_0)}  \le \alpha (r_0)   \pl r_0 - r \pr $$
for all $r \in (0,r_0)$, whereas when  $\alpha (r_0) < 1$
$${r_0}^{\alpha (r_0)} - {r}^{\alpha (r_0)}  \le \alpha (r_0)  \ \pl \frac{r_0}{2} \pr^{\alpha (r_0) -1} \pl r_0 - r \pr $$
for $r \in (r_0/2,r_0)$.

The conclusion follows easily.
\end{proof}

\begin{proof}[Proof of Theorem \ref{thesidubar}]
We write $\zeta_r := \zeta_{\ze, \ef}^{\kai, r}$, for short.
We deduce from  Proposition~\ref{diegosoper}  that the map
$\Phi : (0, 1) \ra \ma$, $r \mapsto \zeta_r$, is injective.
Let us next see  that it is continuous.
Fix $f \in \hac$ with   $0 < \vc f \vd \le 1$  and,   for $0< r <1$ and $n \in \en$,  put $\cer_n (r) := \cer \pl f ,  D^+ \pl z_n, \sqrt[k_n]{r} \pr \pr$ and
$$\alpha (r) :=  \lim_{\ef} \frac{\cer_n (r)}{k_n} .$$
It is easy to see that the function $\alpha : (0,1) \ra [0 , + \infty ]$ is increasing.

Now, consider $0 <s <r <1$. Since there exists $C \in \ef$ such that   $\lim_{\substack{n \in C \\ n \ra \infty}} \va z_n \vb^{k_n} =1$ and we are dealing with an ultrafilter, there is no loss of generality if we assume that   $  \sqrt[k_n]{r} < \va z_n\vb$ for every $n \in C$.
By Lemma~\ref{pijamalari},
$$
\va \zeta_{D^+ \pl z_n, \sqrt[k_n]{r} \pr} (f) - \zeta_{D^+ \pl z_n, \sqrt[k_n]{s} \pr} (f) \vb \le
   \pl \sqrt[k_n]{r} \pr^{\cer_n (r)} - \pl \sqrt[k_n]{s} \pr^{\cer_n (r)}
  $$
 for all $n \in C$, so
 \begin{eqnarray*}
\va \zeta_r (f) - \zeta_s (f) \vb &\le&
  \lim_{\ef}  \pl \sqrt[k_n]{r} \pr^{\cer_n (r)} - \lim_{\ef}  \pl \sqrt[k_n]{s} \pr^{\cer_n (r)}  \\
 &=&   r^{\alpha (r) } - s^{\alpha (r)}   .
 \end{eqnarray*}

The fact that   $\Phi$ is
 continuous is now easy by Lemma~\ref{siatillo}.

Let us next study whether there exist $\lim_{r \ra 0} \zeta_r$ and $\lim_{r \ra 1} \zeta_r$. Note that, given $f \in \hac$, the map
$\Psi_f : (0, 1) \ra \er$, $r \mapsto \zeta_r (f)$ is increasing and bounded, so there exist
$\zeta_0 (f) := \lim_{r \ra 0} \Psi_f (r)$ and $\zeta_1 (f) := \lim_{r \ra 1} \Psi_f (r)$. It is clear that
 the maps $\zeta_0$ and $\zeta_1$ defined in this way belong to $\ma$. Also, since, $\zeta_r \neq \zeta_s$ for
 every $r \neq s$, we    conclude that the the natural extension of $\Phi$ to a new map
 (call it also $\Phi$) $\Phi :[0,1] \ra \ma$ is indeed injective and continuous, so it is a homeomorphism onto its image. The fact that $\Phi [0,1] = \cl \pl \zeta_{\ze, \ef}^{\kai, 0} , \zeta_{\ze, \ef}^{\kai, 1} \pr$ is now easy.
 \end{proof}

\begin{proof}[Proof of Corollary~\ref{pase}]
The fact that $\ker \zeta_{\ze, \ef}^{\kai, r} = \ker \zeta_{\ze, \ef}^{\kai, s}$, for $r, s \in (0,1)$,
 follows easily from Lemma~\ref{correicion} and Corollary~\ref{conillosis}. Also, if $r \in (0,1)$, then $ \zeta_{\ze, \ef}^{\kai, r} \le \zeta_{\ze, \ef}^{\kai, 1}$. Since   $\zeta_{\ze, \ef}^{\kai, 1} = \lim_{r \ra 1}  \zeta_{\ze, \ef}^{\kai, r}$,   $\ker \zeta_{\ze, \ef}^{\kai, 1} = \ker \zeta_{\ze, \ef}^{\kai, r} $ for all $r \in (0,1)$. 
 \end{proof}

Corollary~\ref{pase} has a natural extension. Note that, if for some fixed $r \in (0,1)$, $\we $ satisfies $\va \we - \ze \vb_{\ef}^{\kai} \le r$ (meaning that $\va w_n - z_n \vb \le \sqrt[k_n]{r}$ for all $n$ in some $C \in \ef$),
then $\zeta_{\ze, \ef}^{\kai, s} = \zeta_{\we, \ef}^{\kai, s}$ for all $s \in [r, 1]$. This implies that $\zeta_{\ze, \ef}^{\kai, r}$
ramifies into infinitely many segments
$\pl \zeta_{\we, \ef}^{\kai, 0} , \zeta_{\ze, \ef}^{\kai, r} \qr $.  In the following corollary, we see that all the seminorms in the union of
these segments have the same kernel. We write  $\va \we - \ze \vb_{\ef}^{\kai} = r$ to indicate that $\va w_n - z_n \vb = \sqrt[k_n]{r}$ for all $n$ in some $C \in \ef$. We also write $\myt := \bigcup_{  0 < r <1 }  \tl \we : \va \we - \ze \vb_{\ef}^{\kai} \le r \tr$ and  $\mut := \bigcup_{ \we  \in \myt} \pl \zeta_{\we, \ef}^{\kai, 0} , \zeta_{\we, \ef}^{\kai, 1} \qr $.

\begin{cor}\label{lz}
Let
$\ze$ be a regular sequence with respect to $\ef \in \beta \en \setminus \en$. Then, for each $\kai \in  \com_{\ef}  (\ze) $, all the seminorms in $\mut$ have the same kernel. Moreover,  for every $\varphi  \in  \mut $, $\ker \varphi$  is strictly contained in $\ker \zeta_{\we, \ef}^{\kai, 0} $ whenever $ \we \in \myt$, and if $r \in  \va \ka^{\times} \vb  \cap (0,1)$, then $$\ker \varphi   = \bigcap_{\va \we - \ze \vb_{\ef}^{\kai} = r } \ker \zeta_{\we, \ef}^{\kai, 0} =  \bigcap_{ \va \we - \ze \vb_{\ef}^{\kai} = r  } \ker \delta_{\we, \ef} .$$
\end{cor}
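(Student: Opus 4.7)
The plan is to establish the three assertions of the corollary in turn: the common-kernel statement, the strict containment, and the two intersection formulas.

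For the common kernel, I would combine the observation made immediately before the statement with Corollary~\ref{pase}. The observation, applied with $s=1$, shows that $\zeta^{\kai,1}_{\we,\ef} = \zeta^{\kai,1}_{\ze,\ef}$ for every $\we \in \myt$. By Corollary~\ref{pase}, all seminorms in $\pl\zeta^{\kai,0}_{\we,\ef}, \zeta^{\kai,1}_{\we,\ef}\qr$ have the common kernel $\ker \zeta^{\kai,1}_{\we,\ef}$, and taking the union over $\we \in \myt$ gives that every seminorm in $\mut$ has the single kernel $I := \ker \zeta^{\kai,1}_{\ze,\ef}$. A preliminary check is that $\kai \in \com_\ef(\we)$ whenever $\we \in \myt$, so that $\zeta^{\kai,s}_{\we,\ef}$ is meaningful; this follows from the strict nonarchimedean triangle inequality, which forces $\va w_n - w_m\vb = \va z_n - z_m\vb$ whenever $\va z_n - z_m\vb^{k_m}$ exceeds the $r$ with $\va \we - \ze\vb^\kai_\ef \le r$.

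For the strict containment $I \subsetneq \ker \zeta^{\kai,0}_{\we,\ef}$ with $\we \in \myt$, I would apply Proposition~\ref{diegosoper} to the pair $(\we, \kai)$. This provides $f \in \hac$ with $0 < \zeta^{\kai,s}_{\we,\ef}(f) \le s$ for every $s \in (0,1)$ and with the values strictly increasing in $s$. Letting $s \to 0$ gives $\zeta^{\kai,0}_{\we,\ef}(f) = 0$, while positivity and strict monotonicity force $\zeta^{\kai,1}_{\we,\ef}(f) > 0$, so $f \in \ker \zeta^{\kai,0}_{\we,\ef} \setminus I$.

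For the intersection formulas, the monotone chain $\delta_{\we,\ef} \le \zeta^{\kai,0}_{\we,\ef} \le \zeta^{\kai,s}_{\we,\ef}$ (valid for every $s \in (0,1]$) together with Corollary~\ref{pase} immediately yields
$$I = \ker \zeta^{\kai,r}_{\ze,\ef} \ \subset\ \bigcap_{\va \we - \ze \vb^\kai_\ef = r} \ker \zeta^{\kai,0}_{\we,\ef} \ \subset\ \bigcap_{\va \we - \ze \vb^\kai_\ef = r} \ker \delta_{\we,\ef}.$$
For the reverse inclusion I would argue contrapositively. Given $f \in \hac$ with $\zeta^{\kai,r}_{\ze,\ef}(f) = 2\epsilon > 0$, one obtains $C \in \ef$ with $\zeta_{D^+ \pl z_n, \sqrt[k_n]{r}\pr}(f) > \epsilon$ for every $n \in C$. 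Since $\ka$ is algebraically closed, $\va \ka^\times \vb$ is divisible and the hypothesis $r \in \va \ka^\times \vb$ gives $\sqrt[k_n]{r} \in \va \ka^\times \vb$; the nonarchimedean maximum modulus principle on the closed disk $D^+\pl z_n, \sqrt[k_n]{r}\pr$ then produces $w_n$ on the circle $\tl w : \va w - z_n\vb = \sqrt[k_n]{r}\tr$ with $\va f(w_n)\vb > \epsilon$. Any such $w_n$ satisfies $\va w_n \vb \le \max\tl \va z_n\vb, \sqrt[k_n]{r}\tr < 1$, hence lies in $\de$, and the sequence $\we = (w_n)$ belongs to $\myt$ with $\va \we - \ze\vb^\kai_\ef = r$ and $\delta_{\we,\ef}(f) \ge \epsilon > 0$.

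The main obstacle I anticipate lies in this last step: one must simultaneously place $w_n$ on the prescribed circle, keep it inside $\de$, and force $\va f(w_n)\vb$ close to the disk-supremum of $\va f\vb$, for $n$ in a member of $\ef$. Divisibility of $\va \ka^\times\vb$ (so the circle is nonempty), the nonarchimedean maximum modulus principle (so the disk-sup is attained on the circle, outside a finite union of ``smaller'' open sub-disks), and the ultrametric inequality (keeping $w_n$ in $\de$) are the three ingredients that jointly overcome it.
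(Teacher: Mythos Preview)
Your proposal is correct and follows essentially the same route as the paper's proof. The only minor stylistic difference is in the construction of the $w_n$ in the last step: the paper chooses, for each $n$, a point $w_n$ with $\va w_n - z_n\vb = \sqrt[k_n]{r}$ such that $D^-\pl w_n,\sqrt[k_n]{r}\pr$ contains no zero of $f$ (possible because $f$ has only finitely many zeros in $C(0,\va z_n\vb)$ and $\sqrt[k_n]{r}\in\va\ka^\times\vb$), and then invokes Corollary~\ref{angiosperma} to obtain the exact equality $\va f(w_n)\vb = \zeta_{D^+\pl z_n,\sqrt[k_n]{r}\pr}(f)$, hence $\delta_{\we,\ef}(f)=\zeta_{\we,\ef}^{\kai,0}(f)=\zeta_{\ze,\ef}^{\kai,r}(f)$; you instead appeal to the maximum modulus principle to get $\va f(w_n)\vb>\epsilon$, which suffices via your chain of inclusions. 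Both arguments rest on the same underlying fact about the distribution of zeros on a nonarchimedean circle.
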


 \begin{proof}
 If $r \in (0,1)$ and $\va \we - \ze \vb_{\ef}^{\kai} \le r$, then $  \zeta_{\we, \ef}^{\kai, r} = \zeta_{\ze, \ef}^{\kai, r}$ and, by Corollary~\ref{pase},      $\ker  \zeta_{\we, \ef}^{\kai, r} = \ker \zeta_{\ze, \ef}^{\kai, 1}  $.

 On the other hand, we fix $\varphi = \zeta_{\ve, \ef}^{\kai, s} \in \mut$,  with $s \in (0,1]$ and $\va \ve - \ze \vb _{\ef}^{\kai} \le s'$, $s' \in (0,1)$. Clearly, given $r \in (0,1)$ and a    sequence $\we$ with $\va \we - \ze \vb_{\ef}^{\kai} \le r$, if $t:= \max  \tl r, s' \tr$,
 $ \zeta_{\we, \ef}^{\kai, 0} \le \zeta_{\we, \ef}^{\kai, t} = \zeta_{\ve, \ef}^{\kai, t}$,
 so $\ker \varphi \subset \ker \zeta_{\we, \ef}^{\kai, 0} \subset \ker \delta_{\we, \ef} $. Now, the strict inclusion $  \ker \varphi =\ker \zeta_{\we, \ef}^{\kai, t} \subsetneq \ker \zeta_{\we, \ef}^{\kai, 0} $ follows easily from Proposition~\ref{diegosoper} and the definition of $\zeta_{\we, \ef}^{\kai, 0} $.

  Suppose next that $r \in \va \ka^{\times} \vb$, and that $f \in \hac$ satisfies $\varphi (f) \neq 0$. Thus,
 $\zeta_{\ze, \ef}^{\kai, r} (f) \neq 0$.
 It is clear that,    for each $n$, there exists $w_n \in \ka$ with $\va w_n - z_n \vb= \sqrt[k_n]{r}$
 such that there are no zeros of $f$ in $D^- \pl w_n , \sqrt[k_n]{r} \pr$, meaning  that $\va f(w_n) \vb = \zeta_{D^+ \pl z_n , \sqrt[k_n]{r}\pr} (f) $
 (see Corollary~\ref{angiosperma}), and  $\va f(w_n) \vb = \zeta_{D^+ \pl w_n , \sqrt[k_n]{t}\pr} (f) $ for all  $t \in (0,r)$.
Consequently, for the corresponding sequence $\we$,
 $$\delta_{\we, \ef} (f) = \zeta_{\we, \ef}^{\kai, 0} (f) = \zeta_{\ze, \ef}^{\kai, r} (f) \neq 0.$$
The conclusion    follows immediately.
\end{proof}

Now, if $Z_{\ze, \ef} := \bigcup_{\kai \in \com_{\ef} (\ze)} \myt$, it is clear that $Z_{\ze, \ef} = Z_{\we, \ef}$ whenever $\we \in Z_{\ze, \ef}$. Also, for each $\kai \in \com_{\ef} (\ze)$  the family $\mathcal{F}_{\ef}^{\kai} := \tl Z_{\we, \ef}^{\kai} : \we \in Z_{\ze, \ef} \tr$  is a partition of  $Z_{\ze, \ef} $. It is obvious that if $0 < \lim_{\ef} l_n /k_n < + \infty$, then $Z_{\we, \ef}^{\eli} = Z_{\we, \ef}^{\kai}$ and $A_{\we, \ef}^{\eli} =A_{\we, \ef}^{\kai}$. Note that, by Corollary~\ref{zerote}, if $\lim_{\ef} l_n / k_n =0$, then $\ker \varphi \subsetneq \ker \psi$ whenever $\varphi \in A_{\we, \ef}^{\kai}$ and $\psi \in  A_{\we, \ef}^{\eli}$.

\begin{proof}[Proof of Corollary~\ref{zerote}]
Obviously, for every $r   \in (0,1)$, $\zeta_{\ze, \ef}^{\eli, r} \le \zeta_{\ze, \ef}^{\kai, 0} $, so $ \ker \zeta_{\ze, \ef}^{\kai, 0} \subset \ker \zeta_{\ze, \ef}^{\eli, 1}$. Now, the conclusion follows from Corollary~\ref{lz}.
\end{proof}

\begin{proof}[Proof of Corollary~\ref{lluc}]
Fix $\varphi = \ker \zeta_{\ze, \ef}^{\kai, r} \in \mak$. By Corollary~\ref{lz}, $\ker \zeta_{\ze, \ef}^{\kai, 0}$
 strictly contains   $\ker \varphi$, so $\ker \varphi$ is not maximal.
On the other hand, by Remark~\ref{honble} (assumming without loss of generality that $C= \en$), we   fix $r_0 \in (0,1)$ such that  all the  disks $D^+ \pl z_i , \sqrt[k_i]{r_0} \pr$ are pairwise disjoint. Next, taking into account that $\prod_{n=1}^{\infty} \va z_n \vb^{k_n} >0$, it is easy to see that there exists a sequence $(l_n)$ in $\en$ with $\lim_{n \ra \infty } l_n = + \infty$ such that $\prod_{n=1}^{\infty} \va z_n \vb^{l_n k_n} >0$. Now, we can use Proposition~\ref{gatillo} to construct $f \in \hac$ having $l_n k_n$ zeros in each $D^+ \pl z_n, \sqrt[k_n]{r_0} \pr$. Obviously, $\zeta_{\ze, \ef}^{\kai, r_0} (f) =0$. By Corollary~\ref{pase},   $\ker \varphi = \ker \zeta_{\ze, \ef}^{\kai, r_0}$, so $\varphi$ is not a norm.
\end{proof}

\begin{proof}[Proof of Corollary~\ref{lucia}]
Clearly, if $\lim_{\ef} k_n = + \infty$,
 then $\zeta_{\ze, \ef}^{\mathbf{1}, 1/2} \le \zeta_{\ze, \ef}^{\kai, 0}$,
 and
$     \ker \zeta_{\ze, \ef}^{\kai, 1/2} \subsetneq \ker \zeta_{\ze, \ef}^{\kai, 0} \subset \ker \zeta_{\ze, \ef}^{\mathbf{1}, 1/2} . $
It follows from Corollary~\ref{lluc} that $\ker \zeta_{\ze, \ef}^{\kai, 0}$ is nonzero and nonmaximal. The converse is easy.
\end{proof}

\begin{proof}[Proof of Theorem~\ref{kubzdelamagdalena}]
Let $\ql \delta_{\ze, \ef} , \vc \ \vd \qr_{\mil} $ be the family of all seminorms in $\mil$ of the form $\lim_{\ef} \zeta_{D^+ \pl z_n , r_n \pr}$. It is immediate to see that $\ql \delta_{\ze, \ef} , \vc \ \vd \qr_{\mil}$ is linearly ordered with respect to the usual order $\le$. We next prove that $A_{\ze}^{\ef}:= \cl \ql \delta_{\ze, \ef} , \vc \ \vd \qr_{\mil} $ is also linearly ordered.

 Given different $\varphi_1, \varphi_2 \in   A_{\ze}^{\ef}$, there exists $f \in \hac$ which separates them. We can assume without loss of generality that $\varphi_1 (f) < \varphi_2 (f)$. Next, as in the proof of Theorem~\ref{liberban}, for $  r \in \pl \varphi_1 (f) , \varphi_2 (f) \pr$ we can find $r_n \in (0,1)$ such that $\zeta_{D^+ \pl z_n ,r_n \pr } (f) = r$ for all $n$ in a certain $C \in \ef$. Obviously $\psi := \lim_{\ef} \zeta_{D^+ \pl z_n , r_n \pr} \in  \ql \delta_{\ze, \ef} , \vc \ \vd \qr_{\mil}$ satisfies
$$ \varphi_1 (f)   < \psi (f)   < \varphi_2 (f)  .$$

Now, let $\pl \varphi_{\ze, \ef}^{\kai^{\lambda}, r_{\lambda}} \pr_{\lambda \in \Lambda}$ be a net   in $ \ql \delta_{\ze, \ef} , \vc \ \vd \qr_{\mil}$ converging to $\varphi_1$. Then there exists $\lambda_0 \in \Lambda$ such that $\varphi_{\ze, \ef}^{\kai^{\lambda }, r_{\lambda}} (f) < \psi (f)$ for all $\lambda \ge \lambda_0$, $\lambda \in \Lambda$. In particular, for each $\lambda \ge \lambda_0$,
$$\lim_{\ef} \zeta_{D^+ \pl z_n , \sqrt[{k^{\lambda}}_n]{r_{\lambda}} \pr } (f) < \lim_{\ef} \zeta_{D^+ \pl z_n , r_n \pr} (f) ,$$
and consequently there exists   $E_{\lambda} \in \ef$ such that
$${r_{\lambda}}^{1 / {k^{\lambda}}_n } <  r_n$$
for all $n \in E_{\lambda}$. This obviously implies that, for $g \in \hac$,  $\varphi_{\ze, \ef}^{\kai^{\lambda}, r_{\lambda}} (g) \le \psi (g)$ whenever $\lambda \ge \lambda_0$. We conclude that $\varphi_1 \le \psi$. Similarly $ \psi  \le \varphi_2$. The fact that the compact set $   A_{\ze}^{\ef}$ is linearly ordered follows.

We next see that $ A_{\ze}^{\ef}$ is connected. Suppose to the contrary that $ A_{\ze}^{\ef}$ is the union of two disjoint (nonempty) clopen subsets $U, V$  (with respect to the induced topology).
Suppose also that $\varphi_1 \in U$ and $\varphi_2 \in V$ satisfy $\varphi_1 \le \varphi_2$.
We define
$$\psi_1 := \sup \tl \varphi \in U   : \varphi \le \varphi_2 \tr .  $$
Obviously $\psi_1 \in U$ and $\psi_1 \le \varphi_2$. Similarly,
$$\psi_2 := \inf \tl \varphi \in V   : \psi_1 \le \varphi  \tr    $$
belongs to $V$, and $\psi_1 \le \psi_2$.
 As we showed above there exists $\psi \in A_{\ze}^{\ef}$, different from $\psi_1$ and $\psi_2$ such that $\psi_1 \le \psi \le \psi_2$. It is clear that $\psi  \notin U \cup V$, which is impossible.

Now suppose that $\varphi \in  A_{\ze}^{\ef}$, $\varphi \neq \delta_{\ze, \ef}, \vc \ \vd $. Then there exists $r \in (0,1)$
such that $\zeta_{\ze, \ef}^{\mathbf{1}, r} \le \varphi$ and, by Corollary~\ref{lluc}, $\ker \varphi $ is not maximal. On the other hand, since $\varphi \neq \vc \ \vd$,  $\ker \varphi \neq \tl 0 \tr$, as follows from Proposition~\ref{josejrut}.
\end{proof}

\begin{proof}[Proof of Theorem~\ref{juntafaculta}]
 Suppose that
$\varphi \notin \muz$, that is,  for all $C \in \ef$,   $ \inf_{n \in C} \prod_{\substack{    m \in C \\  m \neq n }} \va z_n - z_m \vb^{k_m} =0 $.
We deduce that, if $\emi \in \com_{\ef} (\ze)$, then $\lim_{\ef} m_n /k_n =0$, and $\zeta_{\ze, \ef}^{\emi,1} \le \varphi_{\ze, \ef}^{\kai, r}$. Thus, $\sup_{\emi \in \com_{\ef} (\ze)}  \zeta_{\ze, \ef}^{\emi,1} \le \varphi$.

To finish the proof, it is enough to see that for each
  $f \in \hac$, there exists $\emi (f) \in \com_{\ef} (\ze)$  such that $\varphi (f) = \zeta_{\ze, \ef}^{\emi (f), 1} (f)$.
 Consider  $f \in \hac$.
If $\varphi (f) =0$, then $\zeta_{\ze, \ef}^{\mathbf{1}, r/2} (f) = 0$ and, by Corollary~\ref{pase},
$\zeta_{\ze, \ef}^{\mathbf{1}, 1} (f) = 0$, so we can take $\emi (f) = \mathbf{1}$.
Next, suppose that $f \notin \ker \varphi$.

For each $n \in \en$,   put $r_n = \sqrt[k_n]{r}$ and
$m_n := \cer \pl f, D^- \pl z_n , r_n \pr \pr $. If there exists $C \in \ef$ such that $m_n =0$ for all $n \in C$, then by Corollary~\ref{angiosperma} $\va f(z_n) \vb  = \zeta_{D^+ \pl z_n , r_n \pr } (f)$ for every $n \in C$. It follows easily that $\delta_{\ze, \ef} (f) = \zeta_{\ze, \ef}^{\mathbf{1}, M} (f) = \varphi (f)$ for all $M \in (0,1)$, so  $\varphi (f) = \zeta_{\ze, \ef}^{\mathbf{1}, 1} (f)$. On the other hand, if the above set $C$ does not belong to $\ef$, then for $n \in \en$
 $$  \prod_{\substack{\va z_j \vb = \va z_n \vb \\ j \in \en \setminus C \\ j \neq n}} \va z_n - z_j \vb^{m_j} \ge \xi_{D^+ \pl z_n , r_n \pr } (f) .$$
Also, by Corollary~\ref{conillosis},   there is $D \in \ef$ with $D \subset \en \setminus C$ such that $\xi_{D^+ \pl z_n , r_n \pr } (f) \ge \varphi (f) / 2 \vc f \vd$ for all $n \in D$.
Therefore
  $\emi (f) := \pl \max \tl m_n, 1 \tr \pr$ belongs to $\com_{\ef} (\ze)$.
On the other hand, it is a routine matter to check that, for   $M \in (0,1)$ fixed, the set of all $n$ with $\sqrt[m_n]{M} < r_n$ belongs to $\ef$ and, by Lemma~\ref{correicion},
$$M \xi_{D^+ \pl z_n, r_n \pr} (f) \le \xi_{D^+ \pl z_n, \sqrt[m_n]{M} \pr} (f) \le \xi_{D^+ \pl z_n, r_n \pr} (f) .$$
Again by Corollary~\ref{conillosis}, this implies that $M \varphi (f) \le \zeta_{\ze, \ef}^{\emi (f), M} (f) \le \varphi (f)$ for all $M \in (0,1)$,
and consequently $\varphi (f) = \zeta_{\ze, \ef}^{\emi (f), 1} (f)$.
\end{proof}

\begin{rem}\label{saradientes}
The following should be compared with Proposition~\ref{lucile}. Let $\ze$ be a regular sequence with respect to $\ef \in \beta \en \setminus \en$, and let $\kai$ be a  sequence in $\en$. Let $ r \in (0,1)$ be such that $z_m \notin D^+ \pl z_n, \sqrt[k_n]{r} \pr$ whenever $m \neq n$.
We see in the proof of Theorem~\ref{juntafaculta} that, if $\kai \notin \com_{\ef} (\ze)$, then
$\varphi_{\ze, \ef}^{\kai , s} = \varphi_{\ze, \ef}^{\kai , r}$ for all $s \in (0, r]$.
\end{rem}

\begin{proof}[Proof of Corollary~\ref{sussosoplador}]
Set $r_n := \sqrt[k_n]{r}$ for all $n$. Suppose that there exists $f \in \ker \varphi$, $f \neq 0$, and put $\cer_n := \cer \pl f, C \pl 0 , \va z_n \vb \pr \pr$ for all $n \in \en$.  By Corollary~\ref{conillosis}, $\lim_{\ef} \xi_{D^+ \pl z_n , r_n \pr} (f) =0$, and consequently $\lim_{\ef} {r_n}^{\cer_n} =0$. This implies that  $\lim_{\ef} \cer_n / k_n = + \infty$, so there exists $C \in \ef$ with
$ \cer_n \ge k_n$ for all $n \in C$. Since $\vc f \vd \ge 1/\prod_{n \in C} \va z_n \vb^{\cer_n}$, we conclude that $\prod_{n \in C} \va z_n \vb^{k_n} >0$.
Now the fact that $\kai$ belongs to $\com_{\ef} (\ze)$ is easy.

 On the other hand, if $\ker \varphi = \tl 0 \tr$, then the fact that $\varphi = \vc \ \vd$ follows from Proposition \ref{josejrut}.
  \end{proof}

\begin{cor}
Given $\varphi := \varphi_{\ze, \ef}^{\kai, r} \in   \mim$.
  If $\varphi \notin \mak$, then
$$\ker \varphi = \bigcap_{\emi \in \com_{\ef} (\ze)} \ker \zeta_{\ze, \ef}^{\emi, 1}     .$$
\end{cor}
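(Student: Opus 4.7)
The plan is to leverage Theorem~\ref{juntafaculta} directly. Since $\varphi = \varphi_{\ze,\ef}^{\kai,r}$ and $\varphi \notin \mak$, in particular $\varphi \notin \muz$ (otherwise $\varphi \in \maz \subset \mak$). Theorem~\ref{juntafaculta} then forces
$$\varphi \;=\; \sup_{\emi \in \com_{\ef}(\ze)} \zeta_{\ze, \ef}^{\emi,1}.$$
This is the representation I will exploit to reduce the computation of $\ker \varphi$ to an intersection of kernels.

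The inclusion $\ker \varphi \subseteq \bigcap_{\emi \in \com_{\ef}(\ze)} \ker \zeta_{\ze,\ef}^{\emi,1}$ is the easy half: for every $\emi \in \com_{\ef}(\ze)$ one has the pointwise inequality $\zeta_{\ze,\ef}^{\emi,1} \le \varphi$ (since $\varphi$ is the supremum of these seminorms), so any $f$ with $\varphi(f)=0$ automatically satisfies $\zeta_{\ze,\ef}^{\emi,1}(f)=0$.

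For the reverse inclusion I will invoke the refinement established inside the proof of Theorem~\ref{juntafaculta}: for every $f \in \hac$ one can choose $\emi(f) \in \com_{\ef}(\ze)$ (depending on $f$) with
$$\varphi(f) \;=\; \zeta_{\ze,\ef}^{\emi(f),1}(f).$$
Thus if $f \in \bigcap_{\emi \in \com_{\ef}(\ze)} \ker \zeta_{\ze,\ef}^{\emi,1}$, then in particular $\zeta_{\ze,\ef}^{\emi(f),1}(f)=0$, which gives $\varphi(f)=0$ and so $f \in \ker \varphi$.

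The only subtle point is making sure the witness $\emi(f)$ genuinely lies in $\com_{\ef}(\ze)$; this is handled in the cited argument via a case split (whether $f$ has zeros in the disks $D^-(z_n,\sqrt[k_n]{r})$ on a set in $\ef$ or not), together with Lemma~\ref{correicion} and Corollary~\ref{conillosis} to pass from the radii $r_n$ to $\sqrt[m_n]{M}$. Once that selection is in place, the two inclusions are immediate and the corollary follows.
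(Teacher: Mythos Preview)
Your proposal is correct and matches the paper's intended argument: the corollary is stated immediately after Theorem~\ref{juntafaculta} with no separate proof, so it is meant to follow directly from the pointwise identity $\varphi = \sup_{\emi \in \com_{\ef}(\ze)} \zeta_{\ze,\ef}^{\emi,1}$ established there. Note that once this supremum representation is in hand, both inclusions are immediate from the definition of pointwise supremum, so invoking the witness $\emi(f)$ for the reverse inclusion, while valid, is slightly more than needed.
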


\section{Seminorms $\zeta_{\ze, \ef}^{\kai, 0}$ and  $\zeta_{\ze, \ef}^{\kai, 1}$. A special kernel}\label{ceropombo}

As we mentioned before Corollary~\ref{lucia}, in the case when $\kai = \mathbf{1}$, $\zeta_{\ze, \ef}^{\kai, 0} = \delta_{\ze, \ef} \in \mil$. Proposition~\ref{tobien} says that seminorms $\zeta_{\ze, \ef}^{\kai, 0}$ and $\zeta_{\ze, \ef}^{\kai, 1}$ always belong to $\mil$.
We finish this section with a proof of Theorem~\ref{zerolari} and an example showing that it cannot be generalized to other seminorms defined as an infimum over a family in $\mak$.

\begin{prop}\label{tobien}
Let $\ze$ be a regular sequence with respect to $\ef \in \beta \en \setminus \en$, and let $\kai \in \com_{\ef} (\ze)$.  Then
$\zeta_{\ze, \ef}^{\kai, 0} , \zeta_{\ze, \ef}^{\kai, 1}  \in \mil$.
\end{prop}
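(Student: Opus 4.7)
The plan is to realize $\zeta_{\ze, \ef}^{\kai, 0}$ and $\zeta_{\ze, \ef}^{\kai, 1}$ as single ultrafilter limits of disk seminorms by diagonalizing the iterated limit that defines them. Since both cases are symmetric I only describe the argument for $\zeta_{\ze, \ef}^{\kai, 0}$; for $\zeta_{\ze, \ef}^{\kai, 1}$ one replaces every occurrence of a sequence $r_m \to 0^+$ by $r_m \to 1^-$. First I would fix $C \in \ef$ witnessing simultaneously the regularity of $\ze$ and the condition $\kai \in \com_\ef (\ze)$, so that in particular $|z_n| \to 1$ for $n \in C$; then pick any sequence $(r_m) \subset (0, 1)$ with $r_m \to 0^+$, so that by Theorem~\ref{thesidubar},
$$\zeta_{\ze, \ef}^{\kai, 0}(f) = \lim_{m \to \infty} \lim_{\ef(n)} \zeta_{D^+ (z_n, \sqrt[k_n]{r_m})}(f) \qquad \text{for every } f \in \hac.$$

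Next I would choose a bijection $\sigma : \en \to \tl (m, n) \in \en \times C : n \ge m \tr$ with $\sigma(j)_2 \to \infty$ as $j \to \infty$ (for instance by an anti-diagonal enumeration), and set $w_j := z_{\sigma(j)_2}$ and $s_j := \sqrt[k_{\sigma(j)_2}]{r_{\sigma(j)_1}}$. Because $\sigma(j)_2 \in C$ and $|z_n| \to 1$ on $C$, one has $|w_j| \to 1$, and clearly $s_j \in (0,1)$, so the sequences $(w_j)$ and $(s_j)$ satisfy the definition of $\mil$. Then I would form the filter $\mathcal{F}$ on $\en$ generated by the sets
$$S_{M, (A_m)} := \tl j \in \en : \sigma(j)_1 \ge M, \ \sigma(j)_2 \in A_{\sigma(j)_1} \tr,$$
where $M \in \en$ and, for each $m \ge M$, $A_m \in \ef$ with $A_m \subset C$. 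Two such generators intersect in $S_{\max \tl M, M' \tr, (A_m \cap A'_m)}$, which is infinite because $A_m \cap A'_m \in \ef$ is infinite and meets $\tl n \in C : n \ge m \tr$. Hence $\mathcal{F}$ has the finite intersection property and every element of it is infinite, so $\mathcal{F}$ together with the Fr\'echet filter extends to a nonprincipal ultrafilter $\af$ on $\en$.

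To verify that $\lim_\af \zeta_{D^+ (w_j, s_j)} = \zeta_{\ze, \ef}^{\kai, 0}$ pointwise on $\hac$, I would fix $f \in \hac$, set $L := \zeta_{\ze, \ef}^{\kai, 0}(f)$, and let $\varepsilon > 0$. The iterated formula yields $M \in \en$ with $\va \zeta_{\ze, \ef}^{\kai, r_m}(f) - L \vb < \varepsilon/2$ for all $m \ge M$; for each such $m$ I would pick $A_m(f, \varepsilon) \in \ef$ with $A_m(f, \varepsilon) \subset C$ and $\va \zeta_{D^+ (z_n, \sqrt[k_n]{r_m})}(f) - \zeta_{\ze, \ef}^{\kai, r_m}(f) \vb < \varepsilon/2$ whenever $n \in A_m(f, \varepsilon)$. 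Then for every $j$ in the set $S_{M, (A_m(f, \varepsilon))} \in \af$, a triangle inequality gives $\va \zeta_{D^+ (w_j, s_j)}(f) - L \vb < \varepsilon$, proving $\lim_\af \zeta_{D^+ (w_j, s_j)}(f) = L$.

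The main obstacle will be the simultaneous demand that $\sigma(j)_2 \to \infty$ (so that $|w_j| \to 1$) and that for each $m$ the generators $S_{M, (A_m)}$ stay nonempty after intersecting the second coordinate with arbitrary sets in $\ef$; both are handled by restricting to pairs $n \in C$ with $n \ge m$ and using an anti-diagonal enumeration. Everything else is a standard $\varepsilon$-diagonal argument, made possible because the defining pointwise iterated limit already exists by Theorem~\ref{thesidubar}.
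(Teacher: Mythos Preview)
Your argument is correct and follows the same overall strategy as the paper: enumerate pairs $(m,n)$ diagonally, set $w_j:=z_{\sigma(j)_2}$ and $s_j:=\sqrt[k_{\sigma(j)_2}]{r_{\sigma(j)_1}}$, build a suitable filter on the index set, and extend to a nonprincipal ultrafilter $\af$ so that the iterated limit becomes a single $\af$-limit. The paper carries this out with the specific radii $r_n^j=\sqrt[k_n]{s/j}$ and a filter base of sets $D_{\eli}^N$ determined by a lower bound $N$ on $j$, a single $D\in\ef$ on $n$, and an $n$-dependent upper cap $j\le l_n$ with $\lim_\ef l_n=+\infty$; this forces an extra step (showing $\lim_\ef m_n\in\en$) to obtain the inequality $\psi\ge\zeta_{\ze,\ef}^{\kai,0}$. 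Your filter base $S_{M,(A_m)}$, allowing a \emph{different} $A_m\in\ef$ for each $m\ge M$, is more flexible and turns the verification into a one-line $\varepsilon$--triangle inequality, at the cost of a slightly larger family of generators. Both constructions yield the result; yours is a bit more streamlined, while the paper's is more economical in the data defining each generator.
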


\begin{proof}
We prove   that
there exist a sequence $\we$ in $\de$ with $\lim_{m \ra \infty} \va w_m \vb =1$, a nonprincipal ultrafilter $\af$ in $\en$, and sequences $(r_m)$ and $(t_m)$ in $(0,1)$ such that
$\zeta_{\ze, \ef}^{\kai, 0} = \lim_{\af} \zeta_{D^+ \pl w_m , r_m \pr }$ and $\zeta_{\ze, \ef}^{\kai, 1} = \lim_{\af} \zeta_{D^+ \pl w_m , t_m \pr }$.

We fix $s \in (0,1)$. For each $n \in \en$ and $j=1 , \ldots, n$, let $r_n^j := \sqrt[k_n]{s/j}$.   We write $A_n := \tl r_n^j : 1 \le j \le n \tr$, and consider $A:= \bigcup_{n=1}^{\infty} A_n$. Then rename the $r_n^j \in A$ by
$r_1 := r_1^1,  r_2 := r_2^1,  r_3 := r_2^2 ,  r_4 := r_3^1 ,\ldots$ We also put $w_m := z_n $ when $r_m = r_n^j$.

For each $N \in \en$, each $D \in \ef$, and each sequence $\eli$ in $\en$   such that $\lim_{\ef} l_n  =+ \infty$ and  $l_n \le n$ for all $n$, consider the set $D_{\eli}^N$ of all   $m \in \en$ satisfying $r_m = r_n^j$, for $N \le j \le l_n$ and $n \in D$.
It is easy to check that the family
$\mathcal{F}$
 of all sets $D_{\eli}^N$ is the basis for a filter $\mathfrak{F}$ in $\en$.
Fix an ultrafilter $\af$ containing $\mathfrak{F}$. Since, for $N \in \en$ fixed, the set $C$ of all $m$ such that $r_m = r_n^j$ with $j \ge N$ belongs to $\mathfrak{F}$,     $\zeta_{\ze, \ef}^{\kai, s/N} \ge \psi:= \lim_{\af} \zeta_{D^+ \pl w_m , r_m \pr }$, and consequently $\zeta_{\ze, \ef}^{\kai, 0} \ge \psi$.

On the other hand, for $f \in \hac$ and $\epsilon >0$, there exists $C \in \af$ such that $\zeta_{D^+ \pl w_m , r_m \pr } (f) < \psi (f) + \epsilon$ for all $m \in C$. Consider the set $M_n :=  \tl j : r_n^j =r_m, m \in C \tr$ for each $n \in \en$, and note that the family $D$ of all $n$ with $M_n \neq \emptyset$ belongs to $\ef$. Also, for $n \in D$, define $m_n := \min M_n$. By the construction of $\mathcal{F}$, $N := \lim_{\ef} m_n $ belongs to $\en$, and consequently the set of all $n \in D$ with $m_n = N$ belongs to $\ef$. Then $\zeta_{D^+ \pl z_n , r^N_n \pr } (f) < \psi (f) + \epsilon$ for all $n \in D$ and $\zeta_{\ze, \ef}^{\kai, s/N} (f) \le \psi (f)  + \epsilon$.
It is a routine matter  to check that $\zeta_{\ze, \ef}^{\kai, 0} \le \psi $.

As for $\zeta_{\ze, \ef}^{\kai, 1}$, we define $t_n^j := \sqrt[k_n]{1 - s/j}$ for each $n \in \en$ and $j=1 , \ldots, n$, and
set $t_m := t_n^j$ in a similar way as above.   Consider also the same sequence $(w_m)$ and the same ultrafilter $\af$ as above. The fact that $\zeta_{\ze, \ef}^{\kai, 1} = \lim_{\af} \zeta_{D^+ \pl w_m , t_m \pr }$ follows easily.
\end{proof}

\begin{lem}\label{tus}
Let $\ze  $ and $\we$ be regular sequences    with respect to $\ef $ and $ \af \in \beta \en \setminus \en$, respectively, and let $ \kai \in \com_{\ef}  (\ze)$  and $ \eli   \in
 \com_{\af}  (\we)$. Suppose that
 $\ker \zeta^{\kai, 0}_{\ze, \ef} \subset \ker \zeta^{\eli, 0}_{\we, \af}$.
Then, for each $C \in \ef$ and $r \in (0,1)$,  the set
 $$B_C^r := \tl m \in \en :   w_m \in \bigcup_{n \in C} D^+ \pl z_n , \sqrt[k_n]{r} \pr \tr$$
 belongs to $\af$.
 \end{lem}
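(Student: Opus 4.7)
The plan is a proof by contradiction. Suppose $C' := \en \setminus B_C^r \in \af$ for some $C \in \ef$ and $r \in (0,1)$. The strategy is to construct $f \in \hac$ with $\zeta_{\ze, \ef}^{\kai, 0}(f) = 0$ and $\lim_\af \va f(w_m) \vb > 0$; since $\zeta_{D^+ \pl w_m, t \pr}(f) \ge \va f(w_m) \vb$ trivially for any $t > 0$, this forces $\zeta_{\we, \af}^{\eli, 0}(f) \ge \lim_\af \va f(w_m) \vb > 0$, contradicting the assumed inclusion $\ker \zeta_{\ze, \ef}^{\kai, 0} \subset \ker \zeta_{\we, \af}^{\eli, 0}$.

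Shrinking $r$ (which preserves $C' \in \af$ since $B_C^{r'} \subset B_C^r$ for $r' \le r$) and $C$ as needed, we may assume by Remark~\ref{honble} that the disks $D^+ \pl z_n, \sqrt[k_n]{r} \pr$ for $n \in C$ are pairwise disjoint and, refining further, that $\prod_{n \in C} \va z_n \vb^{k_n} > 0$. Following the proof of Corollary~\ref{lluc}, pick a sequence $(l_n)_{n \in C}$ with $l_n \to +\infty$ and $\prod_{n \in C} \va z_n \vb^{l_n k_n} > 0$, and apply Proposition~\ref{gatillo} --- after splitting each $z_n$ into $l_n k_n$ pairwise separated auxiliary points inside a subdisk $D^+ \pl z_n, \epsilon_n \pr \subset D^+ \pl z_n, \sqrt[k_n]{r_0} \pr$ for some fixed $r_0 < r$ --- to obtain $f \in \hac$ with $\vc f \vd = 1$, exactly $l_n k_n$ simple zeros in each $D^+ \pl z_n, \epsilon_n \pr$, and no other zeros on the circles $C \pl 0, \va z_n \vb \pr$, $n \in C$. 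As in Corollary~\ref{lluc}, $\zeta_{\ze, \ef}^{\kai, r_0}(f) = 0$ and therefore $\zeta_{\ze, \ef}^{\kai, 0}(f) = 0$.

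For $m \in C'$, the ultrametric inequality yields $\va w_m - \alpha \vb = \va w_m - z_n \vb$ for every zero $\alpha \in D^+ \pl z_n, \epsilon_n \pr$ of $f$, since $\va w_m - z_n \vb > \sqrt[k_n]{r} > \epsilon_n \ge \va z_n - \alpha \vb$. Applying Lemma~\ref{sinaltaboces} one obtains
\[
\frac{\va f(w_m) \vb}{\vc f \vd} = \va w_m \vb^{M_m} \prod_{\substack{n \in C \\ \va z_n \vb > \va w_m \vb}} \va z_n \vb^{l_n k_n} \cdot \Pi_m,
\]
where $M_m = \sum_{n \in C,\, \va z_n \vb < \va w_m \vb} l_n k_n$, and $\Pi_m = 1$ if $\va w_m \vb$ is not a critical radius of $f$, while $\Pi_m = \prod_{n \in C,\, \va z_n \vb = \va w_m \vb} \va w_m - z_n \vb^{l_n k_n}$ otherwise. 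The summability $\sum_n l_n k_n (1 - \va z_n \vb) < \infty$ (equivalent to $\prod \va z_n \vb^{l_n k_n} > 0$) forces $M_m (1 - \va w_m \vb)$ to be uniformly bounded, so $\va w_m \vb^{M_m}$ is bounded below for $\va w_m \vb$ close to $1$ (which holds along $\af$ by regularity of $\we$); and $\prod_{\va z_n \vb > \va w_m \vb} \va z_n \vb^{l_n k_n} \ge \prod_n \va z_n \vb^{l_n k_n} > 0$.

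The main obstacle is bounding $\Pi_m$ below along $\af$. Set $E := \tl n \in C : \va z_n \vb = \va w_m \vb \textrm{ for some } m \in C' \tr$. If $E \notin \ef$, redefine $l_n := 1$ on $E$; this preserves $\zeta_{\ze, \ef}^{\kai, r_0}(f) = 0$ (the $\ef$-limit is controlled on $C \setminus E \in \ef$) and gives $\Pi_m \ge r$ for the relevant $m$'s. If $E \in \ef$, use that $\tl m \in C' : \va w_m \vb = \va z_n \vb \tr$ is finite for each $n$ (as $\va w_m \vb \to 1 > \va z_n \vb$), combined with $\eli \in \com_\af(\we)$ and a residue-class argument, to conclude that for $\af$-almost-every $m \in C'$ the residue $\overline{w_m}$ at radius $\va w_m \vb$ lies outside the finite set $\tl \overline{z_n} : \va z_n \vb = \va w_m \vb \tr$, whence $\va w_m - z_n \vb = \va w_m \vb$ for all such $n$ and $\Pi_m \ge \va w_m \vb^{\sum_{\va z_n \vb = \va w_m \vb} l_n k_n}$ is again bounded below by the summability argument. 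In either case $\lim_\af \va f(w_m) \vb > 0$, completing the contradiction.
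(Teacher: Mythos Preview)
Your overall strategy matches the paper's: argue by contradiction and build a function in $\ker \zeta_{\ze,\ef}^{\kai,0}$ but not in $\ker \zeta_{\we,\af}^{\eli,0}$. The trouble is that you place $l_n k_n$ zeros near each $z_n$ (with $l_n\to\infty$, following Corollary~\ref{lluc}) rather than just $k_n$, and this over-strengthening opens a gap you do not close.

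The failure is in the case $E\in\ef$. Your ``residue-class argument'' does not go through: the only information you have for $m\in C'$ is $|w_m-z_n|>\sqrt[k_n]{r}$, and $\sqrt[k_n]{r}$ may be far smaller than $|w_m|$, so nothing forces $\overline{w_m}$ to avoid the residues $\overline{z_n}$. Concretely, imagine that on each circle $C(0,|w_m|)$ there is a single $z_{n(m)}$ with, say, $r<|w_m-z_{n(m)}|^{k_{n(m)}}<2r$. Then $\Pi_m\le (2r)^{l_{n(m)}}$, and if $l_{n(m)}\to\infty$ along $\af$ (which is exactly what happens when $E\in\ef$ and $l_n\to\infty$ along $\ef$) this tends to $0$. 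The hypothesis $\eli\in\com_{\af}(\we)$ controls distances among the $w_m$'s, not between $w_m$ and $z_n$, so it cannot rescue the bound.

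The fix is simply to drop the factor $l_n$. With exactly $k_n$ zeros near each $z_n$ one already has $\zeta_{\ze,\ef}^{\kai,r}(g)\le r$ for all $r$ (this is Proposition~\ref{diegosoper}), hence $\zeta_{\ze,\ef}^{\kai,0}(g)=0$; the extra $l_n$ was never needed for membership in the kernel. Now $\Pi_m=\prod_{|z_n|=|w_m|}|w_m-z_n|^{k_n}$, and if $N(m)$ indexes the $z_n$ closest to $w_m$, the ultrametric inequality gives $|w_m-z_n|\ge|z_{N(m)}-z_n|$ for $n\ne N(m)$, so
\[
\Pi_m\;\ge\;|w_m-z_{N(m)}|^{k_{N(m)}}\prod_{n\ne N(m)}|z_{N(m)}-z_n|^{k_n}\;\ge\; r\cdot M,
\]
uniformly, with no case split on $E$. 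This is precisely the paper's argument. Two smaller points: you also need Remark~\ref{kuku} (as the paper invokes) to keep the auxiliary zeros produced by Proposition~\ref{gatillo} off the circles $C(0,|w_m|)$ with $|w_m|\ne|z_n|$, otherwise your formula for $|f(w_m)|/\|f\|$ is incomplete; and in your case $E\notin\ef$ the correct bound is $\Pi_m\ge rM$, not $\Pi_m\ge r$, since the product may have many factors.
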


 \begin{proof}
  Suppose to the contrary that a $B_{C'}^{t_0}$ does not belong to $\af$. By Remark~\ref{honble}, we can take $r_0 \in (0,1)$ such  that
 all the  disks $D^+ \pl z_i , \sqrt[k_i]{r_0} \pr$ are pairwise disjoint, for $i$ in a fixed set in $\ef$. We assume that $t_0 \le r_0$.
 Let $(\delta_n)$ be a sequence of positive numbers with $\delta_n < \sqrt[k_n]{t_0}/n$ for all $n$. Then take $g \in \hac$ with $g(0) =1$ having exactly $k_n$ simple zeros in each $D^+ (z_n, \delta_n)$ whenever $n \in C'$  and no other zeros in the circles $C(0, \va z_n \vb )$ with $n \in C'$, and having  no zeros in the circles   $C(0, \va w_m \vb )$ for which $\va w_m \vb \neq \va z_n \vb$ for all $n \in {C'}$ (see Proposition~\ref{gatillo} and Remark~\ref{kuku}).
 Fix $s_0 \in (0,1)$ and, for each $m \notin B_{C'}^{t_0}$,
 take $\eta_m \in \pl 0, \sqrt[l_m]{s_0 /m} \pr$
 such that $D^+ \pl w_m , \eta_m \pr \cap D^+ \pl z_n , \sqrt[k_n]{t_0} \pr = \emptyset$ for all $n \in C'$.
Obviously, if $\va w_m \vb \neq \va z_n \vb$ for all $n \in C'$, then $\xi_{D^+ \pl w_m , \eta_m \pr} (g) =1$. In any other case,
 define $N (m) \in C'$ to be the index corresponding to the smallest value in $\tl \va z_n - w_m \vb : n \in C', \va z_n \vb = \va w_m \vb \tr$. It is clear that
$$ \xi_{D^+ \pl w_m , \eta_m \pr} (g) = \prod_{\substack{\va z_n \vb = \va w_m \vb \\ n \in C'}}  \va z_n - w_m \vb^{k_n}  \ge
 t_0 \prod_{\substack{\va z_n \vb = \va w_m \vb \\ n \in C' \\ n \neq N (m)}} \va z_n - z_{N (m)} \vb^{k_n} . $$
 By Corollary~\ref{conillosis}, this implies  that $\zeta^{\eli, s}_{\we, \af} (g)\ge M t_0 \vc g \vd >0$ for all $s \in (0,1)$, and consequently $\zeta^{\eli, 0}_{\we, \af} (g) >0$.
 On the other hand,  as we easily deduce from the proof of Proposition~\ref{diegosoper}, $\zeta^{\kai, 0}_{\ze, \ef} (g) =0$, against our assumption.
\end{proof}

 \begin{proof}[Proof of Theorem~\ref{zerolari}]
 We  prove both parts separately.

\smallskip

{\em Proof of the second part.} We suppose that $\ker \zeta^{\kai, 0}_{\ze, \ef} = \ker \zeta^{\eli, 0}_{\we, \af}$.
By Remark~\ref{honble}, we can take $C_0 \in \ef$ and  $r_0  \in (0,1)$ such that, for $n \in C_0$, all the disks $D^+ \pl z_n , \sqrt[k_n]{r_0} \pr$ are pairwise disjoint. Similarly,
 we take $D_0 \in \af$ and $s_0 \in (0,1)$ such that all the $D^+ \pl w_m , \sqrt[l_m]{s_0} \pr$ are pairwise disjoint for $m \in D_0$. We first prove the following claim.

 \begin{claim}\label{arxona}
 There are $C_a \in \ef$ with $C_a \subset C_0$ and  $D_a \in \af$ with $D_a \subset D_0$  such that for each $n \in C_a$ there exists exactly one $m \in D_a$ with $\va z_n - w_m \vb \le \sqrt[k_n]{r_0}$ and for each $m \in D_a$ there exists exactly one $n \in C_a$ with $\va w_m - z_n \vb \le \sqrt[l_m]{s_0}$.
 \end{claim}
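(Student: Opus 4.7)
The plan is to apply Lemma~\ref{tus} in both directions: since the hypothesis $\ker \zeta^{\kai, 0}_{\ze, \ef} = \ker \zeta^{\eli, 0}_{\we, \af}$ yields both inclusions, both the lemma and its $(\ze, \ef, \kai, r_0) \leftrightarrow (\we, \af, \eli, s_0)$ counterpart apply. Taking $C = C_0$, $r = r_0$ in the former and $D = D_0$, $s = s_0$ in the latter, I obtain
$$B_0 := \tl m \in \en : w_m \in \bigcup_{n \in C_0} D^+ \pl z_n, \sqrt[k_n]{r_0} \pr \tr \in \af$$
and
$$A_0 := \tl n \in \en : z_n \in \bigcup_{m \in D_0} D^+ \pl w_m, \sqrt[l_m]{s_0} \pr \tr \in \ef .$$
The pairwise disjointness of the $z$-disks in $C_0$ and of the $w$-disks in $D_0$ then defines canonical assignments $\phi : B_0 \cap D_0 \to C_0$ and $\psi : A_0 \cap C_0 \to D_0$, each picking out the unique index whose associated disk contains the prescribed point.

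The natural candidates are
$$C_a := \tl n \in A_0 \cap C_0 : \psi (n) \in B_0 \text{ and } \phi \pl \psi (n) \pr = n \tr , \qquad D_a := \psi (C_a) .$$
By construction $\psi : C_a \to D_a$ is a bijection with inverse $\phi$, and the same set can be written as $D_a = \tl m \in B_0 \cap D_0 : \phi (m) \in A_0 \text{ and } \psi \pl \phi (m) \pr = m \tr$. The required uniqueness then follows from the ultrametric inequality: if, for $n \in C_a$, some $m' \in D_a$ distinct from $\psi (n)$ satisfied $\va z_n - w_{m'} \vb \le \sqrt[k_n]{r_0}$, combining this with $\va z_{\phi (m')} - w_{m'} \vb \le \sqrt[k_{\phi (m')}]{r_0}$ would give $\va z_n - z_{\phi (m')} \vb \le \max \tl \sqrt[k_n]{r_0}, \sqrt[k_{\phi (m')}]{r_0} \tr$, contradicting the pairwise disjointness of the $z$-disks for $n \neq \phi (m')$; hence $\phi (m') = n$, whence $m' = \psi (n)$. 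The uniqueness of $n \in C_a$ with $\va z_n - w_m \vb \le \sqrt[l_m]{s_0}$ is dual, using the injectivity of $\psi$ restricted to $C_a$.

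The main obstacle is to prove $C_a \in \ef$ (and hence, by the symmetric version of the entire argument applied to the equivalent description of $D_a$ given above, $D_a \in \af$). Decomposing $(A_0 \cap C_0) \setminus C_a = E_1 \cup E_2$ with $E_1 := \tl n : \psi (n) \notin B_0 \tr$ and $E_2 := \tl n : \psi (n) \in B_0 , \phi (\psi (n)) \neq n \tr$, I would first handle $E_1$ by a second application of the symmetric version of Lemma~\ref{tus}, this time with $D = B_0 \cap D_0 \in \af$: the resulting set lies in $\ef$ and is contained in $\tl n \in A_0 \cap C_0 : \psi (n) \in B_0 \tr$, forcing $E_1 \notin \ef$. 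For $E_2$, the key observation is that for $n \in E_2$ with $m = \psi (n)$ and $n' = \phi (m) \neq n$, the ultrametric inequality together with the pairwise disjointness of the $z$-disks forces $\sqrt[l_m]{s_0} > \sqrt[k_n]{r_0}$ and confines both $z_n$ and $z_{n'}$ to the single disk $D^+ \pl w_m, \sqrt[l_m]{s_0} \pr$. The hardest step, which I would treat by contradiction, is to rule out $E_2 \in \ef$: mimicking the construction used in the proof of Lemma~\ref{tus} via Proposition~\ref{gatillo} and Remark~\ref{kuku}, I would build an $f \in \hac$ of norm $1$ with controlled clusters of zeros placed on the $z_n$ for $n \in E_2$ and no other zeros on the relevant circles; a careful count of multiplicities---exploiting the presence of the companion $z_{n'}$ in every disk $D^+ \pl w_m, \sqrt[l_m]{s_0} \pr$ hit by $\psi (E_2)$---would yield $\zeta^{\kai, 0}_{\ze, \ef} (f) = 0$ while keeping $\zeta^{\eli, 0}_{\we, \af} (f) > 0$, contradicting the hypothesis.
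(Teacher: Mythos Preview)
Your overall framework---defining $\phi$ and $\psi$ as the canonical ``nearest index'' maps and setting $C_a$ to be the locus where $\phi \circ \psi = \mathrm{id}$---is a clean alternative to the paper's case analysis, and your treatment of the symmetric description of $D_a$, the uniqueness arguments, and the set $E_1$ is correct.

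The gap is the $E_2$ step. Building an auxiliary $f$ here is both unnecessary and, as sketched, does not clearly work. If you place $k_n$ zeros near each $z_n$ for $n \in E_2$, you do get $\zeta^{\kai,0}_{\ze,\ef}(f)=0$; but for $m$ in the set $D_3 := B_{E_2}^{r_0}\cap D_0 \in \af$ one has $\phi(m)\in E_2$, so there are $k_{\phi(m)}$ zeros near $z_{\phi(m)}$ with $\va z_{\phi(m)}-w_m\vb \le \sqrt[k_{\phi(m)}]{r_0}$, and nothing in your sketch prevents these from lying in $D^+\pl w_m,\sqrt[l_m]{s}\pr$ and driving $\zeta^{\eli,0}_{\we,\af}(f)$ to zero. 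The ``companion'' observation does not control this.

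In fact $E_2\notin\ef$ follows from two further applications of Lemma~\ref{tus} and the ultrametric inequality, with no new function needed. Assuming $E_2\in\ef$, Lemma~\ref{tus} gives $D_3:=B_{E_2}^{r_0}\cap D_0\in\af$; for $m\in D_3$ one has $\phi(m)\in E_2$, and if $\sqrt[l_m]{s_0}\ge\sqrt[k_{\phi(m)}]{r_0}$ then $\psi(\phi(m))=m$ would force $\phi(\psi(\phi(m)))=\phi(m)$, contradicting $\phi(m)\in E_2$; hence $\sqrt[l_m]{s_0}<\sqrt[k_{\phi(m)}]{r_0}$ for every $m\in D_3$. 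A second (symmetric) application of Lemma~\ref{tus} with $D=D_3$ yields $A_3\in\ef$, and for $n\in A_3\cap C_0$ the inequality just obtained together with disjointness of the $z$-disks forces $n=\phi(\psi(n))$, so $A_3\cap C_0\cap E_2=\emptyset$, a contradiction. This is precisely the mechanism the paper exploits: instead of isolating $E_2$, the paper splits into the case where both $\{n:\ca U_n=1\}\in\ef$ and $\{m:\ca V_m=1\}\in\af$ (handled directly) and the case where, say, $\{n:\ca U_n\ge 2\}\in\ef$, and then shows by the same disjointness/ultrametric computation that every relevant $V_m$ is a singleton---no construction of $f$ is involved.
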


For $n \in C_0$, we define $U_n$ as  the set of all $m \in D_0$ for which  $w_m \in D^+ \pl z_n, \sqrt[k_n]{r_0} \pr$. Also, for $m \in D_0$, $V_m$ is the set of all $n \in C_0$ such that $z_n \in  D^+ \pl w_m , \sqrt[l_m]{s_0} \pr$. By Lemma~\ref{tus},   the set $C_1$ of all $n \in C_0$ with nonempty $U_n$  belongs to $\ef$. In a similar way we define $D_1$ as the set of all $m \in D_0$ with nonempty $V_m$, which belongs to $\af$.

Suppose first that $C_1' := \tl n \in C_1 : \ca U_n =1 \tr \in \ef$  and that $D_1' := \tl m \in D_1 : \ca V_m =1 \tr \in \af$. Then, by Lemma~\ref{tus}, $C_1'' := C_1' \cap B_{D_1'}^{s_0} \in \ef$  and $D_1'' := D_1' \cap B_{C_1'}^{r_0} \in \af$. Let $n \in C_1''$. Since $n \in B_{D_1'}^{s_0}$, there exists (a unique) $m \in D_1'$ with $\va z_n -  w_m \vb \le \sqrt[l_m]{s_0} $. Also, as $n \in C_1'$,   there exists a unique $m' \in D_0$ with $\va z_n -  w_{m'} \vb \le \sqrt[k_n]{r_0} $. Obviously,  if $\sqrt[k_n]{r_0} \le \sqrt[l_m]{s_0}$, then $\va w_{m'} - w_m \vb \le \sqrt[l_m]{s_0}$, and consequently  $m' = m$.
On the other hand, if $\sqrt[l_m]{s_0} < \sqrt[k_n]{r_0} $, then $m, m' \in U_n$ and, since $n \in C_1$, we also obtain $m=m'$.
This implies that there exists just  one $m \in D_1'$ with $\va z_n -  w_m \vb \le \sqrt[k_n]{r_0} $, and it is immediate that $m \in D_1''$.

Now, it is straightforward to check that, under the above assumptions, the claim is satisfied for $C_a := C_1''$ and $D_a := D_1''$. We next prove it when  $C_1' \notin \ef$ or $D_1' \notin \af$.

We assume without loss of generality that $C_1' \notin \ef$,  that is, the set $C_2$ of all $n \in C_1$ for which $\ca U_n \ge 2$ belongs to $\ef$. Clearly, if $n \in C_2$ and  $U_n = \tl m_1, \ldots, m_k \tr$, then, since $U_n \subset D_0$,  $\sqrt[l_{m_i}]{s_0} < \sqrt[k_n]{r_0}$ for all $i \in \tl 1, \ldots, k \tr$. Consider the set $D_2 :=  D_1 \cap B_{C_2}^{r_0}$,
which   belongs to $\af$, and fix $m \in D_2$. Since $m \in  B_{C_2}^{r_0}$,   there exists (a unique) $n_m \in C_2$ with $  m \in U_{n_m}$. Since $m \in D_1$, $V_m $ is nonempty, and there exists $n'_m \in C_0$ with $\va z_{n'_m} - w_m \vb \le \sqrt[l_m]{s_0}$. Taking into account that $\sqrt[l_m]{s_0} < \sqrt[k_{n_m}]{r_0}$, we see  that $\va z_{n'_m} - z_n \vb \le \sqrt[k_{n_m}]{r_0}$ and, by definition of $C_0$, $z_{n'_m} = z_{n_m}$. This shows in particular that  $V_m$ consists of just one point $n_m$ for all $m \in D_2$, and that this point $n_m$ belongs to $C_2$. This is to say that $\ca U_n \ge 2$ for all $n \in C_0 \cap B_{D_2}^{s_0}$. Also, since for each $n \in C_0 \cap B_{D_2}^{s_0}$  there is a unique $m_n \in D_2$ with $\va z_n - w_{m_n} \vb \le \sqrt[l_{m_n}]{s_0}$, we deduce that $\sqrt[l_{m_n}]{s_0}  < \sqrt[k_n]{r_0}$ and $m_n \in   U_n$. On the other hand, if there exists $m \in D_2 \cap U_n$ with  $m  \neq m_n$, then
 $\va z_n - w_m \vb \le \sqrt[k_n]{r_0}$ and we necessarily have $\sqrt[l_m]{s_0}  < \sqrt[k_n]{r_0}$. This implies that $\va z_n - z_{n_m} \vb \le \sqrt[k_n]{r_0}$, which is impossible. We conclude that $D_2 \cap U_n = \tl m_n \tr$ for all $n \in C_0 \cap B_{D_2}^{s_0}$. It is easy to see that, if we define  $C_a :=  C_0 \cap B_{D_2}^{s_0}$ and $D_a := D_2$, then the claim follows.
$\blacksquare$

\smallskip

We use Claim~\ref{arxona} to define a bijective map $\mathbf{j} : D_a   \rightarrow    C_a $ in such a way that, by Lemma~\ref{tus}, for $D \subset D_a$, $\mathbf{j} (D) \in \ef$ if and only if $D \in \af$.
We next study $L: = \lim_{\af} l_m / k_{\mathbf{j} \pl m \pr}$. We first suppose that $L = + \infty$. If $s \in (0, s_0)$, then we can write $s = {r_0}^t$ for some $t \in (0, + \infty)$, and there exists $D_s \in \af$ with
$D_s \subset D_a$ such that $l_m / t \ge k_{\mathbf{j} (m)}$ for all $m \in D_s$. This implies that $\sqrt[k_{\mathbf{j} (m)}]{r_0} \le \sqrt[l_m]{s}$ for all $m \in D_s$. It is easy to conclude that $\zeta_{\ze, \ef}^{\kai, r_0} \le \zeta_{\we, \af}^{\eli, 0} $ and, by
Corollary~\ref{lz}, $\ker \zeta_{\we, \af}^{\eli, 0} \subsetneq  \ker \zeta_{\ze, \ef}^{\kai,0 }  $, against our hypothesis. On the other hand, it is straightforward to see that, if $L = 0$, then $\lim_{\ef} k_n / l_{\mathbf{j}^{-1}  (n)} = + \infty  $, which is also  impossible, as above. We deduce that $0 < L < + \infty$. Now, again by Lemma~\ref{tus}, given $r \in (0, r_0)$ and $C \in \ef$ with $C \subset C_a$, the set $D_a \cap B_C^r$ belongs to $\af$, and it is clear that $D^+ \pl w_m , \sqrt[k_{\mathbf{j} (m)}]{r} \pr = D^+ \pl z_{\mathbf{j} (m)} , \sqrt[k_{\mathbf{j} (m)}]{r} \pr$ for all $m \in D_a \cap B_C^r$. By Corollary~\ref{pernia}, this means that $\zeta_{\ze, \ef}^{\kai, r} = \zeta_{\we, \af}^{\eli, r^L}$, and the conclusion follows easily.

\smallskip

{\em Proof of the first part.}
 Consider a sequence $\we  $ in $\de$, $\af \in \beta \en \setminus \en$, $ \eli   \in
 \com_{\af}  (\we)$,
 and $s_0 \in (0,1)$, and suppose that $\ker \zeta^{\kai, 0}_{\ze, \ef} = \ker \zeta^{\eli, s_0}_{\we, \af}$. Taking into account Corollaries~\ref{lluc} and~\ref{lucia}, we necessarily have
  $\lim_{\ef} k_n = + \infty$.

We can assume that  $M := \inf_{i \in \en} \prod_{j \neq i} \va z_i - z_j \vb^{k_j} >0$ and, by Remark~\ref{honble}, take $r_0 \in (0,M)$ so  that
 all the  disks $D^+ \pl z_i , \sqrt[k_i]{r_0} \pr$ are pairwise disjoint. Assume also that $D^+ \pl z_i , \sqrt[k_i]{r_0} \pr \subset C \pl 0, \va z_i \vb \pr$ for all $i$.

We next introduce some notation. Given a set $D \subset \en$, for $n \in \en$ and $r \in (0, r_0]$  we put
$$\Lambda_n^r (D) := \sum_{\substack{\va z_n - w_m \vb \le \sqrt[k_n]{r} \\ m \in D}} l_m , $$
$$\Upsilon_n^r (D) := \prod_{\substack{ \sqrt[k_n]{r} < \va z_n - w_m \vb \le \sqrt[k_n]{r_0} \\ m \in D} } \va z_n - w_m \vb^{l_m} , $$
and
$$\Sigma_n^r (D) := \prod_{\substack{    \va z_n - w_m \vb \le \sqrt[k_n]{r} \\ m \in D} } \va z_n - w_m \vb^{l_m} . $$
Then, we define
$\Upsilon^r (D) := \lim_{\ef} \Upsilon_n^r (D)$ and
$\Sigma^r (D) := \lim_{\ef} \Sigma_n^r (D)$. Finally, we set
$\Upsilon (D) := \lim_{r \ra 0} \Upsilon^r (D)$ and $\Sigma (D) := \lim_{r \ra 0} \Sigma^r (D)$.

We fix a strictly decreasing  sequence $(\alpha_i)$ in $(0,r_0]$ converging to $0$.

\begin{claim}\label{mcv}
If $\beta_{\alpha_i} := \lim_{\ef} \Lambda_n^{\alpha_i} \pl B_{\en}^{r_0} \pr / k_n $ for all $i$,  then
 $\lim_{i \ra \infty} \beta_{\alpha_i} =0$.
 \end{claim}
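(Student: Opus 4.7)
The sequence $(\beta_{\alpha_i})_{i \in \en}$ is nonincreasing in $i$, since $\Lambda_n^r(B_{\en}^{r_0})$ is nondecreasing in $r$ and $(\alpha_i)$ is strictly decreasing; hence $\beta := \lim_{i \to \infty} \beta_{\alpha_i}$ exists in $[0, +\infty]$. The plan is to argue by contradiction: assuming $\beta > 0$, I would produce a single $f \in \hac$ with $\zeta^{\kai, 0}_{\ze, \ef}(f) = 0$ while $\zeta^{\eli, s_0}_{\we, \af}(f) > 0$, violating the hypothesis that these two seminorms share the same kernel.

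Fix $\beta' \in (0, \beta)$. For each $i \in \en$ the set $E_i := \{n \in \en : \Lambda_n^{\alpha_i}(B_{\en}^{r_0})/k_n > \beta'\}$ lies in $\ef$, and Lemma~\ref{tus} (applicable since $\ker \zeta^{\kai, 0}_{\ze, \ef} = \ker \zeta^{\eli, s_0}_{\we, \af} \subset \ker \zeta^{\eli, 0}_{\we, \af}$, as $\zeta^{\eli, 0} \le \zeta^{\eli, s_0}$) applied with $C = E_i$ yields $B_{E_i}^{\alpha_i} \in \af$. I would then use Proposition~\ref{gatillo} together with Remark~\ref{kuku} to build $f \in \hac$ with $\vc f \vd \in \va \ka^{\times} \vb$ whose only zeros are $l_m$ simple zeros placed in tiny disks $D^+(w_m, \epsilon_m)$ for $m$ ranging in an $\af$-large subset $S$ of $B_{\en}^{r_0}$ contained in the witness $C_{\eli} \in \af$ of $\eli \in \com_{\af}(\we)$, the radii $\epsilon_m \to 0$ chosen small enough that no $\sqrt[l_m]{s_0}$ or $\sqrt[k_n]{\alpha_i}$ is a critical radius of $f$.

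Granting such a construction, the contradiction would be produced by two matching computations. On the $\af$-side, for each $m \in S$, expanding $\xi_{D^+(w_m, \sqrt[l_m]{s_0})}(f)$ on the circle $C(0, \va w_m \vb)$, splitting zeros into those inside and outside the disk, and exploiting the regularity bound $\prod_{m' \in S, m' \ne m} \va w_m - w_{m'} \vb^{l_{m'}} \ge N > 0$ telescopes to $\xi_{D^+(w_m, \sqrt[l_m]{s_0})}(f) \ge s_0 N$; Corollary~\ref{conillosis} then gives $\zeta^{\eli, s_0}_{\we, \af}(f) \ge s_0 N \vc f \vd > 0$. On the $\ef$-side, provided that $S$ is chosen so that $\sum_{m \in S, \va w_m - z_n \vb \le \sqrt[k_n]{\alpha_i}} l_m \ge \beta' k_n / 2$ for $\ef$-most $n \in E_i$, the multiplicity of zeros of $f$ inside $D^+(z_n, \sqrt[k_n]{\alpha_i})$ is at least $\beta' k_n / 2$, so $\xi_{D^+(z_n, \sqrt[k_n]{\alpha_i})}(f) \le \alpha_i^{\beta'/2}$ (every factor outside the disk on the circle being bounded above by $1$), and hence $\zeta^{\kai, 0}_{\ze, \ef}(f) \le \zeta^{\kai, \alpha_i}_{\ze, \ef}(f) \le \alpha_i^{\beta'/2} \vc f \vd \to 0$. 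Thus $\zeta^{\kai, 0}_{\ze, \ef}(f) = 0$, and the kernel equality forces $\zeta^{\eli, s_0}_{\we, \af}(f) = 0$, the desired contradiction.

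The principal obstacle lies in the selection of $S$: it must be $\af$-large (so that Proposition~\ref{gatillo} applies and the $\com_{\af}$-regularity yields the positive lower bound on the $\af$-side) yet also preserve enough of the clustering measured by $\Lambda_n^{\alpha_i}$ on an $\ef$-large set of $n$'s. Since $\af$-largeness is a constraint on $S$ as a set of indices $m$, while the $\ef$-clustering is a cross-condition involving both $m$'s and $n$'s, bridging the two is expected to be the technical heart of the argument; it likely requires an iterative refinement of $S$ using Lemma~\ref{tus} (which, via the fact that $B_C^r \in \af$ for every $C \in \ef$, provides the one direct link between the two ultrafilters), at each stage discarding $m$'s whose $w_m$ fail to lie in the relevant clustering regions around sufficiently many $z_n$'s.
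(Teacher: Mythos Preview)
Your plan is essentially the paper's argument, and the two computations you outline (the $\af$-side lower bound and the $\ef$-side upper bound) are exactly what the paper does. The difference is that the ``principal obstacle'' you flag does not arise: the paper simply takes $S = B_{\en}^{r_0}$ in its entirety, with no further refinement. With that choice the $\ef$-side is automatic, because by construction the number of zeros of $f$ in $D^+\bigl(z_n,\sqrt[k_n]{\alpha_i}\bigr)$ is \emph{precisely} $\Lambda_n^{\alpha_i}(B_{\en}^{r_0})$; hence $\xi_{D^+(z_n,\sqrt[k_n]{\alpha_i})}(f)\le \alpha_i^{\Lambda_n^{\alpha_i}(B_{\en}^{r_0})/k_n}$ and, after passing to $\lim_{\ef}$ and using Corollary~\ref{conillosis}, $\zeta_{\ze,\ef}^{\kai,\alpha_i}(f)\le \vc f\vd\,\alpha_i^{\beta_{\alpha_i}}$. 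The $\af$-side lower bound $\zeta_{\we,\af}^{\eli,s_0}(f)>0$ is dispatched with the phrase ``as in the proof of Proposition~\ref{diegosoper}''. Lemma~\ref{tus} is not used at all in this claim (it enters later in the proof of Theorem~\ref{zerolari}); the only bridge between $\ef$ and $\af$ that is invoked here is the kernel equality itself. So the iterative refinement you sketch in your final paragraph is unnecessary, and your proposal becomes a complete proof once you drop the search for a clever $S$ and set $S=B_{\en}^{r_0}$.
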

Consider
  $f \in \hac$ having exactly $l_m$ zeros close enough to each $w_m$ for $m \in B_{\en}^{r_0} $ as given in Proposition~\ref{gatillo}, and no other zeros in the circles $C \pl 0, \va  w_m \vb \pr$. Then, as in the proof of Proposition~\ref{diegosoper}, $ \zeta_{\we, \af}^{\eli, s_0} (f) >0$, so $\alpha := \zeta^{\kai, 0}_{\ze, \ef} (f) >0$ and
$\zeta_{\ze, \ef}^{\kai, r} (f) \ge \alpha$ for every $r \in (0,r_0]$.  By Corollary~\ref{conillosis},
for all  $r \in (0, r_0]$,
\begin{equation}\label{picazores}
 \alpha  \le \vc f \vd \lim_{\ef} \xi_{D^+ \pl z_n , \sqrt[k_n]{r} \pr} (f) \le \vc f \vd \lim_{\ef} r^{\Lambda_n^r \pl B_{\en}^{r_0} \pr / k_n} \Upsilon^r \pl B_{\en}^{r_0} \pr  .
\end{equation}

 If we now suppose that there exists $K >0$ such that
 $\beta_{\alpha_i}   \ge K $
  for infinitely many $i$, then Inequality~\ref{picazores} gives
$$\alpha \le \vc f \vd \lim_{\ef}    {\alpha_i}^{\Lambda_n^{\alpha_i} \pl B_{\en}^{r_0}   \pr /k_n} \le  \vc f \vd {\alpha_i}^K, $$ which is absurd.
$\blacksquare$

\smallskip

 Note   that, by Inequality~\ref{picazores}, $\Upsilon \pl B_{\en}^{r_0} \pr  >0$,
 and that, for  each $i \in \en$,
$$ C_i := \tl n \in \en : \Upsilon_n^{\alpha_i} \pl B_{\en}^{r_0} \pr > \frac{i}{i+1} \Upsilon  \pl B_{\en}^{r_0} \pr \tr  $$
belongs to $\ef$. On the other hand,   we define
$$E_i := \tl  m \in B_{C_i}^{r_0} :  \va w_m - z_n \vb > \sqrt[k_n]{\alpha_i} \  \forall n \in \en \tr  $$
and  $G_0 := \bigcup_{i =1}^{\infty} E_i $.

\begin{claim}\label{borja}
 $\af $ belongs to $ \cn G_0$.
\end{claim}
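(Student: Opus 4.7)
The plan is to argue by contradiction: assume $\af \notin \cn G_0$, so $S := \en \setminus G_0 \in \af$. The hypothesis $\ker \zeta^{\kai, 0}_{\ze, \ef} = \ker \zeta^{\eli, s_0}_{\we, \af}$, together with Corollaries~\ref{pase} and~\ref{lz}, yields $\ker \zeta^{\kai, 0}_{\ze, \ef} \subsetneq \ker \zeta^{\eli, 0}_{\we, \af}$, so Lemma~\ref{tus} applies and delivers $B_C^r \in \af$ for every $C \in \ef$ and every $r \in (0,1)$. In particular $B_{\en}^{r_0} \in \af$, hence $S' := S \cap B_{\en}^{r_0} \in \af$. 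For $m \in S'$ the pairwise disjointness of the disks $D^+ \pl z_n , \sqrt[k_n]{r_0} \pr$ singles out a unique $n(m)$ with $w_m \in D^+ \pl z_{n(m)}, \sqrt[k_{n(m)}]{r_0} \pr$; set $\rho_m := \va w_m - z_{n(m)} \vb$. Unpacking $m \notin E_i$ (using $\alpha_i \le r_0$) shows that for each $i$, either $n(m) \notin C_i$ or $\rho_m \le \sqrt[k_{n(m)}]{\alpha_i}$.

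Next, I would invoke Proposition~\ref{gatillo} (and Remark~\ref{kuku}) to produce $h \in \hac$ with exactly $k_n$ simple zeros clustered in a very small disk around each $z_n$ and no other zeros on the circles $C(0, \va z_n \vb)$. A direct computation, parallel to the proof of Proposition~\ref{diegosoper}, yields $\zeta^{\kai, r}_{\ze, \ef}(h) \le r \vc h \vd$ for every $r \in (0, r_0]$, so $\zeta^{\kai, 0}_{\ze, \ef}(h) = 0$. The kernel equality then forces $h \in \ker \zeta^{\eli, s_0}_{\we, \af}$, which by Corollary~\ref{conillosis} becomes $\lim_{\af} \xi_{D^+ \pl w_m, \sqrt[l_m]{s_0} \pr}(h) = 0$.

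The crux is to contradict this limit by a lower bound along $S'$. For $n' \neq n(m)$ with $\va z_{n'} \vb = \va z_{n(m)} \vb$, disjointness gives $\va w_m - z_{n'} \vb = \va z_{n(m)} - z_{n'} \vb$, so using the constant $M := \inf_n \prod_{n' \neq n} \va z_n - z_{n'} \vb^{k_{n'}} > 0$ one obtains
$$\xi_{D^+ \pl w_m, \sqrt[l_m]{s_0} \pr}(h) \ \ge \ M \cdot \min \tl \rho_m^{k_{n(m)}}, \ s_0^{k_{n(m)}/l_m} \tr ,$$
the two terms corresponding to whether $z_{n(m)}$ lies outside or inside $D^+ \pl w_m, \sqrt[l_m]{s_0} \pr$. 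The main obstacle is to rule out the vanishing of this bound along $\af$: split $S'$ according to whether $\tl m \in S' : \rho_m = 0 \tr$ or its complement is in $\af$. In the complement case, the dichotomy forces $n(m) \notin C_i$ for every $i$ with $\sqrt[k_{n(m)}]{\alpha_i} < \rho_m$, which clashes with the defining property $\Upsilon_n^{\alpha_i} \pl B_{\en}^{r_0} \pr > (i/(i+1)) \Upsilon \pl B_{\en}^{r_0} \pr$ on $C_i$ and the positivity $\Upsilon \pl B_{\en}^{r_0} \pr > 0$ established in Claim~\ref{mcv}. In the zero case, $w_m = z_{n(m)}$ along $\af$, and one must push $\af|_{S'}$ forward through $m \mapsto n(m)$ and play the resulting ultrafilter on $\en$ against $\ef$ and against the exponent ratios $k_{n(m)}/l_m$; executing this last step cleanly, probably with a second, more tailored application of Proposition~\ref{gatillo}, is the delicate part.
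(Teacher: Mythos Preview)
Your approach has a genuine gap, and in fact the key step goes the wrong way. In the case $\rho_m > 0$, you assert that the dichotomy ``$n(m) \notin C_i$ or $\rho_m \le \sqrt[k_{n(m)}]{\alpha_i}$'' forces $n(m) \notin C_i$ for large $i$, and that this ``clashes'' with the definition of $C_i$. But there is no clash: for a fixed $m$, there is no reason $n(m)$ should lie in infinitely many $C_i$. What Lemma~\ref{tus} actually gives is that $S' \cap B_{C_i}^{r_0} \in \af$ for every $i$; on this set $n(m) \in C_i$, so the dichotomy forces the \emph{second} alternative, namely $\rho_m^{k_{n(m)}} \le \alpha_i$. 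Letting $i \to \infty$ yields $\lim_{\af} \rho_m^{k_{n(m)}} = 0$, the opposite of what you need. Consequently your lower bound $M \cdot \rho_m^{k_{n(m)}}$ tends to $0$, and (once one also allows $k_{n(m)}/l_m \to +\infty$, which nothing rules out) your test function $h$ genuinely lies in $\ker \zeta^{\eli, s_0}_{\we, \af}$: no contradiction arises. The $\rho_m = 0$ case is not a separate difficulty; it is the limiting instance of the same phenomenon.

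The paper's proof runs in the opposite direction. From $U \cap E_i = \emptyset$ one deduces that for every $n \in C_i$ the points $w_m$ with $m \in U$ lying in $D^+ \pl z_n, \sqrt[k_n]{r_0} \pr$ are already in $D^+ \pl z_n, \sqrt[k_n]{\alpha_i} \pr$; hence $\Lambda_n^{r_0}(U) \le \Lambda_n^{\alpha_i} \pl B_{\en}^{r_0} \pr$ and, by Claim~\ref{mcv}, $\lim_{\ef} \Lambda_n^{r_0}(U)/k_n = 0$. This is the crucial quantitative fact, and it is exactly what the computation above shows in disguise. One then chooses $N_n \to \infty$ along $\ef$ with $N_n \Lambda_n^{r_0}(U) \le k_n$ and builds (via Proposition~\ref{gatillo}) a function $g$ with $N_n l_m$ zeros near each relevant $w_m$. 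Because of the inflation by $N_n$, one gets $\zeta^{\eli, s_0}_{\we, \af}(g) = 0$; because the total number of zeros in each $D^+ \pl z_n, \sqrt[k_n]{r_0} \pr$ is at most $k_n$, one gets $\zeta^{\kai, 0}_{\ze, \ef}(g) > 0$. The test function must be built on the $w_m$'s, not on the $z_n$'s.
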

Suppose to the contrary that $\af \notin \cn G_0$.  Then we find a closed and open neighborhood $U \subset \beta \en$ of $\af$
with $U \subset B_{\en}^{r_0}$,
such that   $U \cap E_i = \emptyset$ for all $i \in \en$.
 This
implies that, for each $i$,
if $m \in U$ satisfies
$\va w_m - z_n \vb \le \sqrt[k_n]{r_0} $ for some
$n \in C_i$, then $w_m$ belongs to
$D^+ \pl z_n, \sqrt[k_n]{\alpha_i} \pr $. Therefore, for  each  $i \in \en$ and
$n \in C_i$,  $\Lambda_n^{r_0} (U)    \le \Lambda_n^{\alpha_i} \pl B_{\en}^{r_0} \pr $,
and  consequently  $$\lim_{\ef} \frac{\Lambda_n^{r_0} (U) }{k_n} \le \beta_{\alpha_i}$$ for all $i$. By Claim~\ref{mcv},  $\lim_{\ef} \Lambda_n^{r_0} (U)  / k_n =0$, so there exists  $(N_n)$
with $\lim_{\ef} N_n = + \infty$ and $\lim_{\ef} N_n \Lambda_n^{r_0} (U)  / k_n =0$. We select $C_0 \in \ef$ with $N_n \Lambda_n^{r_0} (U) \le k_n$ for all $n \in C_0$. Now, consider a sequence $(\delta_m)$ of positive numbers with  $\lim_{m \ra \infty} \delta_m =0$, in such a way that the disks $  D^+ \pl w_m , \delta_m   \pr$ are pairwise disjoint and $\delta_m \le \sqrt[l_m]{s_0}$ for all $m \in \en$. Then take a function $g \in \hac$ having exactly $N_n l_m$ zeros at distance less than $\delta_m$ to each $w_m$ with $\va w_m - z_n \vb \le \sqrt[k_n]{r_0}$ and $m  \in U$, for $n \in C_0$, and no other zeros in those circles (see Proposition~\ref{gatillo}). It is straightforward to see that, for all $r \in (0, r_0]$ and $n \in C_0$,
$$\xi_{D^+ \pl z_n , \sqrt[k_n]{r} \pr} (g) \ge r^{N_n \Lambda_n^{r_0} (U)  / k_n} \prod_{\substack{\va  z_j \vb = \va z_n \vb \\ j \in C_0 \\ j \neq n}} \va z_n - z_j \vb^{N_j \Lambda_j^{r_0} (U)} \ge r^{N_n \Lambda_n^{r_0} (U) / k_n } M.$$
Taking into account Corollary~\ref{conillosis}, we see that $\zeta_{\ze, \ef}^{\kai, r} (g) \ge M \vc g \vd$ for all $r$, so
$\zeta_{\ze, \ef}^{\kai, 0} (g) >0$. On the other hand, for each $K \in \en$, the set $D_K := \tl n \in C_0 : N_n \ge K \tr$ belongs to $\ef$ and, by Lemma~\ref{tus}, $B_{D_K}^{r_0}$ belongs to $\af$. Also, for
$m \in B_{D_K}^{r_0} \cap U$,  $\xi_{D^+ \pl w_m , \sqrt[l_m]{s_0} \pr} (g) \le {s_0}^K$, so
$\zeta^{\eli, s_0}_{\we, \af} (g)  = 0$, which is impossible. This shows that $\af$ belongs to $\cn G_0$. $\blacksquare$

\smallskip

\begin{claim}\label{roberto}
$\Upsilon (G_0) >0$ and $\Sigma  (G_0) =1$.
\end{claim}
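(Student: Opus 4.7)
My plan is to prove the two assertions separately, anchored by the identity
\[
\Sigma_n^r(D)\,\Upsilon_n^r(D) \;=\; P_n(D) \;:=\; \prod_{\substack{m \in D \\ |z_n - w_m| \le \sqrt[k_n]{r_0}}} |z_n - w_m|^{l_m},
\]
which passes (after taking $\ef$-limits and then $r \to 0$) to $\Sigma(D)\,\Upsilon(D) = P(D)$. This identity converts the question about $\Sigma(G_0)$ into one about the ratio $P(G_0)/\Upsilon(G_0)$.

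For $\Upsilon(G_0) > 0$, I would exploit the inclusion $G_0 \subset B_\en^{r_0}$. Since every factor in the product defining $\Upsilon_n^r$ lies in $(0,1]$, restricting the index set to a subset only enlarges the product, so $\Upsilon_n^r(G_0) \ge \Upsilon_n^r(B_\en^{r_0})$. The defining inequality for $C_i \in \ef$ gives $\Upsilon_n^{\alpha_i}(B_\en^{r_0}) > \frac{i}{i+1}\Upsilon(B_\en^{r_0})$ for every $n \in C_i$, and passing to the $\ef$-limit yields $\Upsilon^{\alpha_i}(G_0) \ge \frac{i}{i+1}\Upsilon(B_\en^{r_0})$. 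Letting $i \to \infty$, and using both that $\alpha_i \downarrow 0$ and the monotonicity of $\Upsilon^r$ in $r$, will give $\Upsilon(G_0) \ge \Upsilon(B_\en^{r_0}) > 0$, where positivity is supplied by Inequality~\ref{picazores}.

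For $\Sigma(G_0) = 1$, the easy direction $\Sigma(G_0) \le 1$ follows at once from $\Upsilon_n^r(G_0) \ge P_n(G_0)$ (the $\Upsilon$ product omits certain factors $\le 1$). For the reverse inequality, I would first use the pairwise disjointness of the disks $D^+(z_i, \sqrt[k_i]{r_0})$ to note that each $w_m \in B_\en^{r_0}$ lies in a unique such disk $D^+(z_{n_m}, \sqrt[k_{n_m}]{r_0})$, so only $m$ with $n_m = n$ can contribute to $\Sigma_n^r(G_0)$. Letting $j_m$ be the smallest index with $m \in E_{j_m}$ and writing $u_m := |z_n - w_m|^{k_n}$, the definition of $G_0 = \bigcup_i E_i$ forces $\alpha_{j_m} < u_m \le r$ for every contributing $m$. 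Stratifying by $j = j_m$ gives
\[
\Sigma_n^r(G_0) \;=\; \prod_{j:\,\alpha_j < r} \;\prod_{\substack{m \in E_j^*,\, n_m = n \\ u_m \le r}} u_m^{l_m/k_n},
\]
which I would estimate from below using $u_m > \alpha_{j_m}$ together with the total exponent control $\Lambda_n^r(G_0)/k_n \le \Lambda_n^r(B_\en^{r_0})/k_n$, whose $\ef$-limit $\beta_r$ tends to $0$ as $r \to 0$ by Claim~\ref{mcv}.

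The main obstacle will be the scale analysis in this second part. Claim~\ref{mcv} controls the total exponent in the $\ef$-limit, but individual factors $u_m^{l_m/k_n}$ can be arbitrarily small when $j_m$ is large, so a crude scale-by-scale bound is not uniform. To close the argument I expect to need to combine the defining property of $E_i$ (which forces $u_m > \alpha_i$ for every $m \in E_i$ and every $n$) with the ultrametric structure of the pairwise disjoint disks and the regularity of $\we$ with respect to $\af$ (via $\eli \in \com_\af(\we)$) to obtain uniform control of the contributions across scales. Passing to the $\ef$-limit and then $r \to 0$, together with $\beta_{\alpha_i} \to 0$, should yield $-\log \Sigma^r(G_0) \to 0$, and hence $\Sigma(G_0) = 1$.
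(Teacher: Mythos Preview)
Your plan for $\Upsilon(G_0)>0$ is correct and essentially the paper's: the monotonicity $\Upsilon_n^r(G_0)\ge \Upsilon_n^r(B_{\en}^{r_0})$ together with the defining property of $C_i$ and Inequality~\ref{picazores} gives positivity. (The paper in fact proves the equality $\Upsilon(G_0)=\Upsilon(B_{\en}^{r_0})$ by observing that $H_n^i(G_0)=H_n^i(B_{\en}^{r_0})$ for $n\in C_i$; this equality is needed later.) Your factorization $P_n(D)=\Sigma_n^{r_0}(D)=\Sigma_n^r(D)\,\Upsilon_n^r(D)$ is exactly the identity the paper uses at the end of the argument.

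The genuine gap is in the second half. Your scale-by-scale bound $u_m>\alpha_{j_m}$ is too weak: different $m\in G_n$ may have arbitrarily large $j_m$, so even though Claim~\ref{mcv} forces the total exponent $\Lambda_n^r(G_0)/k_n$ to be small, a single term with tiny $u_m$ can make $-\log\Sigma_n^r(G_0)$ large. The missing structural fact, which is the heart of the paper's proof, is that for each $n\in C_1$ there is a \emph{single} index $I(n)$ (defined via $T_n:=\min\{\Upsilon_n^{\alpha_i}(B_{\en}^{r_0}):n\in C_i\}$) such that \emph{every} $m\in G_n$ satisfies $|z_n-w_m|>\sqrt[k_n]{\alpha_{I(n)}}$; equivalently $G_n\subset H_n^{I(n)}(B_{\en}^{r_0})$. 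This yields $\Sigma_n^{r_0}(G_0)\ge \Upsilon_n^{\alpha_{I(n)}}(B_{\en}^{r_0})>\tfrac{I(n)}{I(n)+1}\Upsilon(B_{\en}^{r_0})$. One then shows $\lim_{\ef}I(n)=+\infty$, and this step does \emph{not} use $\eli\in\com_{\af}(\we)$ as you suggest; it uses Lemma~\ref{tus} and Claim~\ref{borja} (that $\af\in\cn G_0$) to rule out $\{n:I(n)=i_0\}\in\ef$ for any fixed $i_0$. Passing to the $\ef$-limit gives $\Sigma^{r_0}(G_0)\ge\Upsilon(B_{\en}^{r_0})=\Upsilon(G_0)$, and then your identity $\Sigma^{r_0}(G_0)=\Sigma(G_0)\Upsilon(G_0)$ forces $\Sigma(G_0)\ge 1$, hence $=1$. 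Without the uniform cutoff $I(n)$ and the use of Claim~\ref{borja}, your outlined approach cannot close.
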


It is clear that $\Upsilon (G_0)   \ge \Upsilon \pl  B_{\en}^{r_0} \pr >0$. We first see that, in fact,  $\Upsilon (G_0)   = \Upsilon \pl  B_{\en}^{r_0} \pr$. Note that, by definition,
$$E_i  = \bigcup_{n \in C_i}  \tl  m \in B_{\en}^{r_0} : \sqrt[k_n]{\alpha_i} < \va z_n - w_m \vb \le \sqrt[k_n]{r_0} \tr   . $$
This implies that, if for  $n \in C_i$ and $A  $ with $E_i \subset A \subset B_{\en}^{r_0} $  we denote
 $H_n^i (A) :=
 \tl   m \in A : \sqrt[k_n]{\alpha_i} < \va z_n - w_m \vb \le \sqrt[k_n]{r_0}    \tr $,
 then  $H_n^i (E_i) =  H_n^i \pl B_{\en}^{r_0} \pr =  H_n^i (A)$. In particular, $H_n^i (G_0) = H_n^i \pl B_{\en}^{r_0}  \pr$, and we easily deduce that $\Upsilon_n^{\alpha_i} (G_0) = \Upsilon_n^{\alpha_i} \pl B_{\en}^{r_0}  \pr $ whenever $n \in C_i$.
 Thus, $\Upsilon^{\alpha_i} (G_0)  = \Upsilon^{\alpha_i} \pl  B_{\en}^{r_0} \pr$ for all $i$, and $\Upsilon (G_0)   = \Upsilon \pl  B_{\en}^{r_0} \pr$.

 Next, for  $n \in C_1$ fixed, put $T_n := \min \tl \Upsilon_n^{\alpha_i} \pl B_{\en}^{r_0} \pr : n \in C_i \tr$  and $I(n) := \min \tl i \in \en : T_n = \Upsilon_n^{\alpha_i} \pl B_{\en}^{r_0} \pr  \tr$.
  We see that, if $i > I(n)$, then either $n \notin C_i$ or  $\Upsilon_n^{\alpha_i} \pl  B_{\en}^{r_0} \pr = T_n  $. In the first case, if $m \in B_{C_i}^{r_0}$, then $\va z_n - w_m \vb > \sqrt[k_n]{r_0}  $, so $$ D^+ \pl z_n, \sqrt[k_n]{r_0} \pr \cap \tl w_m : m \in E_i \tr = \emptyset.$$
 In the second case, the set $$H_n^i \pl B_{\en}^{r_0} \pr \setminus H_n^{I(n)} \pl B_{\en}^{r_0} \pr = \tl  m : \sqrt[k_n]{\alpha_i} < \va z_n - w_m \vb \le \sqrt[k_n]{\alpha_{I(n)}} \tr$$ is empty, so $H_n^i \pl B_{C_i}^{r_0} \pr \subset H_n^{I(n)} \pl B_{\en}^{r_0} \pr$.
 Thus,   the set
\begin{eqnarray*}
  G_n &:=& \tl m \in G_0 : \va z_n - w_m \vb \le \sqrt[k_n]{r_0} \tr \\
  &=& \bigcup_{i=1}^{\infty} \tl m \in E_i : \va z_n - w_m \vb \le \sqrt[k_n]{r_0} \tr \\
  &=& \bigcup_{i=1}^{\infty} H_n^i \pl B_{C_i}^{r_0} \pr
   \end{eqnarray*}
   is contained in $H_n^{I(n)} \pl B_{\en}^{r_0} \pr$,
   and consequently
 $$\Sigma_n^{r_0} (G_0) = \prod_{  m \in G_n  } \va z_n - w_m \vb^{l_m} \ge \Upsilon_n^{\alpha_{I(n)}} \pl B_{\en}^{r_0} \pr . $$
We deduce that,  since $n \in C_{I(n)}$ and $\Upsilon (G_0) = \Upsilon \pl  B_{\en}^{r_0} \pr$, $$\Sigma_n^{r_0} (G_0) >  (I(n) / I(n) +1)  \Upsilon (G_0). $$

Now fix $N \in \en$. We next see that the set $A (N)$ of all $n \in C_1$ such that $I(n) \ge N$ belongs to $\ef$.
If this is not the case, then for an $i_0 < N$, the set $L $ of all $n \in C_1$ such that $I (n) = i_0$ belongs to $\ef$, and by Lemma~\ref{tus}  the set $B_L^{r_0}$ belongs to $\af$. Also, by Claim~\ref{borja}, $G_0$ belongs  to $\af$ (see \cite[p. 90]{GJ}), and so does  $G_0 \cap B_L^{r_0}   $.
On the other hand, as above, if $n \in L$, then $G_n \subset  H_n^{i_0} \pl B_{\en}^{r_0} \pr \subset B_{\en}^{r_0} \setminus B_{\en}^{\alpha_{i_0}} $.
Consequently  $G_0 \cap B_L^{r_0}  = \bigcup_{n \in L} G_n \subset B_{\en}^{r_0} \setminus B_{\en}^{\alpha_{i_0}}$, and we conclude that $B_{\en}^{\alpha_{i_0}}$ does not belong to $\af$, against Lemma~\ref{tus}.

 It is clear that, for $n \in A (N)$, $$\Sigma_n^{r_0} (G_0) >  (I(n) / I(n) +1)  \Upsilon (G_0) \ge  (N / N+1)  \Upsilon (G_0) .$$
We easily deduce that $\Sigma^{r_0} (G_0) \ge \Upsilon (G_0)$. On the other hand, it is straightforward to see that, for all $i \in \en$,
   $\Sigma^{r_0} (G_0) = \Sigma^{\alpha_i} (G_0) \Upsilon^{\alpha_i} (G_0) $, so $\Sigma^{r_0} (G_0) = \Sigma  (G_0) \Upsilon (G_0)$.  Now it is immediate    that $\Sigma  (G_0) =1$. $\blacksquare$

   \smallskip

By Claim~\ref{mcv}, there exists a strictly decreasing sequence $(r_i)_{i \ge 2}$ in $(0,1)$, convergent to $0$, with $$ \lim_{\ef} \frac{\Lambda_n^{r_i} \pl G_0 \pr}{k_n} < \frac{1}{2^i}$$
for all $i \ge 2$. Also, let
$\pl R_i \pr_{i \ge 2}$ be a strictly increasing sequence in $(0,1)$ with $R:= \prod_{i=2}^{\infty} {R_i}^i >0$.
By Claim~\ref{roberto}, and  taking a subsequence if necessary, we assume that
$\Sigma^{r_i}(G_0 ) > R_i$ and $\Upsilon (G_0 ) / \Upsilon^{r_i} (G_0 ) > R_i$ for all $i \ge 2$.
 We put $r_1 := r_0$.

 Note that, given $i \ge 2$, the set
$$D_i :=   \tl n \in \en: \Sigma_n^{r_j} (G_0) > R_j \mbox{ and } 2^j\Lambda_n^{r_j} \pl G_0 \pr < k_n \mbox{ whenever } 2 \le j \le i \tr $$ belongs to $\ef$,
and   by Lemma~\ref{tus}, $B_{  D_i}^{r_i}$ is in $\af$. Also, if $$D'_i := \tl n \in D_i : D^+ \pl z_n , \sqrt[k_n]{r_i} \pr \cap \tl w_m : m \in G_0 \tr \neq \emptyset \tr$$ does not belong to $\ef$, then, again by Lemma~\ref{tus}, $B_{D_i \setminus D'_i} $ belongs to $\af$, and so does $G_0 \cap B_{D_i \setminus D'_i}^{r_i}  = \emptyset $. We conclude that each $D'_i$ is in $\ef$. Note also that $D'_2 \supset D'_3 \supset \cdots$ and that, by construction, $z_n \neq w_m$ for all $m \in G_0$ and $n \in \en$, so $\sup \tl i : n  \in D'_i \tr     \in \en$ for all $n$.

 Now, for $j$ and $n$ in $ \en$, put $P_j^n := \tl m \in G_0 :     \sqrt[k_n]{r_{j+1}} < \va z_n - w_m \vb \le \sqrt[k_n]{r_j}  \tr$.
It is easy to see that the sets $P_j^n$ are pairwise disjoint and $G_0 = \bigcup_{n, j} P_j^n$, so
given $m \in G_0$, there exist  unique
$j_m, n_m \in \en  $ with $m \in P_{j_m}^{n_m}$.

We define, when $n_m \in D'_2$ and $j_m \ge 2$,
$$d_m := \min \tl  j_m,  \max \tl i : n_m \in D'_i \tr   \tr .$$
Since $d_m \ge i$ for all $m \in G_0 \cap B_{  D'_i}^{r_i}$,  $\lim_{\af} d_m = + \infty$.

\begin{claim}\label{isere}
Let $i \ge 2$. For $ n \in D'_i$,
$$ {\Sigma'}_n^{r_i} (G_0):= \prod_{\substack{\va z_n - w_m   \vb \le \sqrt[k_n]{r_i} \\ m \in G_0}} \va z_n - w_m \vb^{d_m l_m} \ge R.$$
\end{claim}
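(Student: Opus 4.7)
The plan is to rewrite ${\Sigma'}_n^{r_i}(G_0)$ by grouping the factors according to the coordinates $(n_m,j_m)$, telescope the resulting expression via Abel summation, and then invoke the lower bounds $\Sigma_n^{r_j}(G_0)>R_j$ built into the definition of $D_j$.

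The first step is to identify which $m\in G_0$ contribute. Because the disks $D^+\pl z_{n'},\sqrt[k_{n'}]{r_0}\pr$ are pairwise disjoint (as $r_0<M$) and $G_0\subset B_{\en}^{r_0}$, every $m\in G_0$ with $\va z_n-w_m\vb\le\sqrt[k_n]{r_i}\le\sqrt[k_n]{r_0}$ satisfies $n_m=n$; and the inequality $\va z_n-w_m\vb\le\sqrt[k_n]{r_i}$ forces $r_{j_m}\le r_i$, hence $j_m\ge i$. Set $I^{\ast}(n):=\max\tl i':n\in D'_{i'}\tr$, which is a natural number by the observation recalled just before the claim. Because $D'_2\supset D'_3\supset\cdots$ and $D'_j\subset D_j$, one gets $T_j:=\Sigma_n^{r_j}(G_0)>R_j$ for every $2\le j\le I^{\ast}(n)$, and $d_m=\min\tl j_m,I^{\ast}(n)\tr$ for every $m$ contributing to the product.

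The second step is the telescoping. Put
$$\alpha_j:=\prod_{\substack{m\in G_0\\ n_m=n,\;j_m=j}}\va z_n-w_m\vb^{l_m},$$
so that $T_j=\prod_{j'\ge j}\alpha_{j'}$; in fact $T_j=1$ for $j$ large enough, since $r_j\to 0$ and $z_n\neq w_m$. Grouping by $j$,
$${\Sigma'}_n^{r_i}(G_0)=\prod_{j=i}^{I^{\ast}(n)}\alpha_j^{\,j}\cdot\prod_{j>I^{\ast}(n)}\alpha_j^{\,I^{\ast}(n)}.$$
An Abel summation on the exponents $e_j=\min\tl j,I^{\ast}(n)\tr$, whose first difference equals $1$ on $i+1\le j\le I^{\ast}(n)$ and $0$ beyond, collapses this to
$${\Sigma'}_n^{r_i}(G_0)=T_i^{\,i}\prod_{j=i+1}^{I^{\ast}(n)}T_j,$$
with the boundary term at $j=I^{\ast}(n)+1$ cancelling the tail thanks to $T_{\infty}=1$.

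The last step compares with $R=\prod_{j=2}^{\infty}R_j^{\,j}$. Inserting the bounds $T_j>R_j$ gives ${\Sigma'}_n^{r_i}(G_0)>R_i^{\,i}\prod_{j=i+1}^{I^{\ast}(n)}R_j$, and
$$\frac{R_i^{\,i}\prod_{j=i+1}^{I^{\ast}(n)}R_j}{R}=\prod_{j=2}^{i-1}R_j^{-j}\cdot\prod_{j=i+1}^{I^{\ast}(n)}R_j^{\,1-j}\cdot\prod_{j>I^{\ast}(n)}R_j^{-j}>1,$$
since each $R_j\in(0,1)$ and every exponent on the right is negative. Hence ${\Sigma'}_n^{r_i}(G_0)\ge R$, as required. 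The main obstacle is the bookkeeping in the Abel summation, in particular checking that the boundary contribution at $j=I^{\ast}(n)+1$ cancels exactly with the tail $\prod_{j>I^{\ast}(n)}\alpha_j^{\,I^{\ast}(n)}$, and that the degenerate case $I^{\ast}(n)=i$ (empty middle product) still yields $R_i^{\,i}>R$.
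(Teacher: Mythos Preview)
Your proof is correct and follows essentially the same route as the paper: both decompose the product according to the shells $P_q^n$, identify $d_m=\min\{j_m,j_0\}$ with $j_0=I^{\ast}(n)$, and exploit the inequalities $\Sigma_n^{r_q}(G_0)>R_q$ that come from $n\in D'_{j_0}\subset D_{j_0}$.

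The only difference is in the final bounding step. You run an Abel summation to obtain the exact identity ${\Sigma'}_n^{r_i}(G_0)=T_i^{\,i}\prod_{j=i+1}^{j_0}T_j$ and then compare with $R$. The paper instead observes directly that each shell factor $\alpha_q^{\,q}=\bigl(\prod_{m\in P_q^n}|z_n-w_m|^{l_m}\bigr)^q$ satisfies $\alpha_q\ge \Sigma_n^{r_q}(G_0)>R_q$ (since $\alpha_q=T_q/T_{q+1}$ and $T_{q+1}\le 1$), and the tail factor equals $T_{j_0}^{\,j_0}>R_{j_0}^{\,j_0}$, yielding immediately ${\Sigma'}_n^{r_i}(G_0)\ge\prod_{q=i}^{j_0}R_q^{\,q}\ge R$. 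This shortcut avoids the summation-by-parts bookkeeping altogether; your Abel summation gives the sharper intermediate bound $R_i^{\,i}\prod_{j=i+1}^{j_0}R_j$, but both are ample for the conclusion.
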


Let $j_0 :=    \max \tl j : n \in D'_j    \tr $. If $q \ge j_0$, then $d_m = j_0$ for all $m \in P_q^n$.
Also, if $i <  j_0 $ and $i \le q \le j_0 -1$, then
$d_m = q$ for all $m \in P_q^n$.
Consequently,
$$
{\Sigma'}_n^{r_i} (G_0)  = \prod_{q = i}^{j_0 -1} \prod_{\substack{      m \in P_q^n }} \va z_n - w_m \vb^{q l_m}  \cdot \prod_{\substack{\va w_m - z_n \vb \le \sqrt[k_n]{r_{j_0 }} \\ m \in G_0}} \va z_n - w_m \vb^{j_0 l_m}
  ,
$$
implying that ${\Sigma'}_n^{r_i} (G_0) \ge  \prod_{q=i}^{j_0 }    {R_q}^{q} \ge R$.
$\blacksquare$

\smallskip

Now we
extend the definition of $d_m$ to all $G_0$ by putting $d_m :=0$ for all $m \in G_0 \setminus B_{D'_2}^{r_2}$, and
consider $g \in \hac$ having $d_m l_m$ zeros close enough to each $w_m$ for $m \in G_0  $ and no other zeros in the circles containing those $w_m$ (see Proposition~\ref{gatillo}).
It is straightforward to see that, since $\lim_{\af} d_m = + \infty$,   $\zeta^{\eli, s_0}_{\we, \af}  (g) =0$.
Next, we prove that
$\zeta^{\kai, 0}_{\ze, \ef}  (g) > 0$, against our assumption.
We write, for $   i \in \en$ and $ n \in D'_2$,
$${\Lambda'}_n^{r_i} (G_0) := \sum_{\substack{\va z_n - w_m \vb \le \sqrt[k_n]{r_i} \\ m \in G_0}} d_m l_m . $$

\begin{claim}\label{nadienlafaku}
For all $n \in D'_2$, ${\Lambda'}_n^{r_1} (G_0) \le 2 k_n  $.
\end{claim}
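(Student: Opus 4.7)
The plan is to show that the only $m \in G_0$ contributing to the sum lie in the disk $D^+ \pl z_n, \sqrt[k_n]{r_0} \pr$, then partition the sum into layers indexed by $j_m$, and finally telescope via Abel summation to reduce everything to the defining bound of $D_{i_0}$.

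First I would verify that every $m \in G_0$ with $\va z_n - w_m \vb \le \sqrt[k_n]{r_0}$ satisfies $n_m = n$. Indeed, the disks $D^+ \pl z_{n'}, \sqrt[k_{n'}]{r_0} \pr$ are pairwise disjoint by the choice of $r_0$ (see Remark~\ref{honble}), so $w_m$ lies in $D^+ \pl z_n, \sqrt[k_n]{r_0} \pr$ and in no other such disk; but the membership $m \in P_{j_m}^{n_m}$ forces $\va z_{n_m} - w_m \vb \le \sqrt[k_{n_m}]{r_{j_m}} \le \sqrt[k_{n_m}]{r_0}$, so $n_m = n$. Writing $i_0 := \max \tl i : n \in D'_i \tr$, which is finite (by the paragraph preceding the claim) and at least $2$ (since $n \in D'_2$), the definition of $d_m$ collapses to $d_m = \min(j_m, i_0)$ when $j_m \ge 2$, while $d_m = 0$ when $j_m = 1$.

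Next I would partition the index set $\tl m \in G_0 : \va z_n - w_m \vb \le \sqrt[k_n]{r_0} \tr$ into the layers $P_j^n$ ($j \ge 1$), and set $a_j := \sum_{m \in P_j^n} l_m$. Since every such $m$ has $n_m = n$, the decomposition $\tl m \in G_0 : \va z_n - w_m \vb \le \sqrt[k_n]{r_j} \tr = \bigsqcup_{k \ge j} P_k^n$ gives $\Lambda_n^{r_j}(G_0) = \sum_{k \ge j} a_k$ for all $j \ge 1$, hence $a_j = \Lambda_n^{r_j}(G_0) - \Lambda_n^{r_{j+1}}(G_0)$. The sum in question then reads
\[
{\Lambda'}_n^{r_1}(G_0) \;=\; \sum_{j=2}^{i_0} j\,a_j \;+\; i_0 \sum_{j > i_0} a_j,
\]
and a direct Abel summation telescopes this: the boundary term $-\,i_0 \Lambda_n^{r_{i_0+1}}(G_0)$ produced by the first sum cancels exactly $i_0 \Lambda_n^{r_{i_0+1}}(G_0)$ from the tail, leaving
\[
{\Lambda'}_n^{r_1}(G_0) \;=\; 2\,\Lambda_n^{r_2}(G_0) \;+\; \sum_{j=3}^{i_0} \Lambda_n^{r_j}(G_0) .
\]

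Finally, since $n \in D'_{i_0} \subset D_{i_0}$ gives $2^j \Lambda_n^{r_j}(G_0) < k_n$ for $2 \le j \le i_0$, substitution yields
\[
{\Lambda'}_n^{r_1}(G_0) \;<\; \frac{k_n}{2} \;+\; k_n \sum_{j \ge 3} 2^{-j} \;=\; \frac{3 k_n}{4} \;\le\; 2 k_n .
\]
The main subtlety is the Abel cancellation: some layers $P_j^n$ may remain nonempty for $j > i_0$ (this can occur precisely when $n \notin D_{i_0+1}$), so a priori one has no $\Lambda$-bound for such indices; the formula above, however, holds uniformly in either case, and it is exactly this cancellation that removes the need for any bound beyond $j = i_0$.
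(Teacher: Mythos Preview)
Your proof is correct and follows essentially the same route as the paper: both arrive at the identity ${\Lambda'}_n^{r_1}(G_0) = 2\Lambda_n^{r_2}(G_0) + \sum_{j=3}^{i_0}\Lambda_n^{r_j}(G_0)$ and then invoke the bounds $2^j\Lambda_n^{r_j}(G_0) < k_n$ from the definition of $D_{i_0}$. The paper obtains the identity by rewriting $\sum_i i\,[d_m=i]$ as $2\,[d_m\ge 2] + \sum_{i\ge 3}[d_m\ge i]$ and swapping sums, whereas you reach it via Abel summation on the layer totals $a_j$; these are equivalent manipulations. Your explicit verification that $n_m = n$ (hence $d_m = \min(j_m,i_0)$ for the relevant $m$) fills in a step the paper leaves implicit, and your closing remark about the cancellation of the $j>i_0$ tail is a helpful clarification.
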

For $n \in D'_2$ fixed, let $j_0 := \max \tl j : n \in D'_j    \tr$. Note first that ${\Lambda'}_n^{r_1} (G_0) = {\Lambda'}_n^{r_2} (G_0)$ because $d_m =0$ for all  $m \notin B_{D'_2}^{r_2}$.
Also, taking into account that, if $j \ge 2$ and $m \in P_j^n$,  then $d_m \le j_0$,
we have
\begin{eqnarray*}
{\Lambda'}_n^{r_2} (G_0)
&=& \sum_{ i=2 }^{j_0} \sum_{\substack{\va z_n - w_m \vb \le \sqrt[k_n]{r_2} \\   d_m =i }} i l_m \\
&=&   \sum_{\substack{\va z_n - w_m \vb \le \sqrt[k_n]{r_2} \\    d_m \ge 2 }} 2 l_m + \sum_{ i=3 }^{j_0} \sum_{\substack{\va z_n - w_m \vb \le \sqrt[k_n]{r_2}  \\   d_m \ge i }}   l_m , \\ \
&=&   \sum_{\substack{\va z_n - w_m \vb \le \sqrt[k_n]{r_2} \\    m \in G_0 }} 2 l_m + \sum_{ i=3 }^{j_0} \sum_{\substack{\va z_n - w_m \vb \le \sqrt[k_n]{r_i}  \\   m \in G_0 }}   l_m ,
\end{eqnarray*}
that is,
${\Lambda'}_n^{r_2} (G_0) = 2 \Lambda_n^{r_2} (G_0) + \sum_{ i=3 }^{j_0}  \Lambda_n^{r_i} (G_0) $.
 Since  $2^i \Lambda_n^{r_i} (G_0) < k_n$ for $i \ge 2$ with $n \in D'_i$, the claim follows.
$\blacksquare$

\smallskip

For $i \ge 3$ and $n \in D'_i$,  let
$$
{\Upsilon'}_n^{r_i}  (G_0) := \prod_{j=2}^{i-1} \pl \prod_{ m \in P_j^n} \va z_n - w_m \vb^{l_m} \pr^j  .
$$

\begin{claim}\label{hombro}
${\Upsilon'}^{r_i} (G_0) :=
\lim_{\ef} {\Upsilon'}_n^{r_i}  (G_0) \ge R $ for all $i$.
\end{claim}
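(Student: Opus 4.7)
The plan is to reduce the product defining ${\Upsilon'}_n^{r_i}(G_0)$ to a telescoping expression in the $\Sigma_n^{r_j}(G_0)$, and then exploit the lower bounds furnished by the definition of $D_i$. Setting $Q_j^n := \prod_{m \in P_j^n} \va z_n - w_m \vb^{l_m}$, one has
$${\Upsilon'}_n^{r_i}(G_0) = \prod_{j=2}^{i-1}(Q_j^n)^j,$$
and, since the sets $P_q^n$ with $q \ge j$ partition $\tl m \in G_0 : \va z_n - w_m \vb \le \sqrt[k_n]{r_j} \tr$ (recall $z_n \neq w_m$ for all admissible $m$ and $n$, as noted just before Claim~\ref{isere}), one also has $\Sigma_n^{r_j}(G_0) = \prod_{q \ge j} Q_q^n$ for every $j \ge 2$.

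The heart of the argument is a reindexing: writing $(Q_j^n)^j = \prod_{k=1}^j Q_j^n$ and swapping the order of the resulting double product yields
$$\prod_{j=2}^{i-1}(Q_j^n)^j = \prod_{k=1}^{i-1}\prod_{j=\max(k,2)}^{i-1} Q_j^n = \frac{\pl \Sigma_n^{r_2}(G_0)\pr^2 \Sigma_n^{r_3}(G_0) \cdots \Sigma_n^{r_{i-1}}(G_0)}{\pl \Sigma_n^{r_i}(G_0) \pr^{i-1}},$$
where in the last equality I used $\prod_{j=q}^{i-1} Q_j^n = \Sigma_n^{r_q}(G_0)/\Sigma_n^{r_i}(G_0)$ for each $q \in \tl 2, \ldots, i-1 \tr$.

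For $n \in D_i$ (and $D_i \in \ef$ by construction), the defining inequalities of $D_i$ give $\Sigma_n^{r_j}(G_0) > R_j$ for $2 \le j \le i$, while $\Sigma_n^{r_i}(G_0) \le 1$ trivially, so ${\Upsilon'}_n^{r_i}(G_0) > R_2^2 R_3 R_4 \cdots R_{i-1}$. Since each $R_j$ lies in $(0,1)$, one has $R_j \ge R_j^j$ for $j \ge 2$ and $\prod_{j \ge i} R_j^j \le 1$, whence
$$R_2^2 R_3 R_4 \cdots R_{i-1} \ge R_2^2 R_3^3 \cdots R_{i-1}^{i-1} \ge \prod_{j=2}^{\infty} R_j^j = R.$$
Passing to the limit along $\ef$ yields the required inequality ${\Upsilon'}^{r_i}(G_0) \ge R$. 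The only step requiring care is the combinatorial reindexing that produces the telescoping; the rest is bookkeeping.
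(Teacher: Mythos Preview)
Your proof is correct, but it follows a genuinely different route from the paper's. The paper works with the $\Upsilon$-quantities: it sets $W_j := \lim_{\ef} Q_j^n$, observes that $Q_j^n = \Upsilon_n^{r_{j+1}}(G_0)/\Upsilon_n^{r_j}(G_0)$, and hence $W_j = \Upsilon^{r_{j+1}}(G_0)/\Upsilon^{r_j}(G_0) \ge \Upsilon(G_0)/\Upsilon^{r_j}(G_0) > R_j$ by the standing assumption on the $r_i$. Then ${\Upsilon'}^{r_i}(G_0) = \prod_{j=2}^{i-1} W_j^{\,j} \ge \prod_{j=2}^{i-1} R_j^{\,j} \ge R$, with the limit taken first and the finite product afterwards. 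You instead use the $\Sigma$-quantities: your telescoping reindexing expresses ${\Upsilon'}_n^{r_i}(G_0)$ as $(\Sigma_n^{r_2})^2 \Sigma_n^{r_3}\cdots \Sigma_n^{r_{i-1}} / (\Sigma_n^{r_i})^{i-1}$, and then the defining inequalities of $D_i$ (namely $\Sigma_n^{r_j}(G_0) > R_j$) yield a pointwise lower bound for all $n \in D_i$ before passing to the limit. The two arguments draw on the two halves of the hypothesis imposed on the sequence $(r_i)$ (the $\Upsilon$-bound versus the $\Sigma$-bound); your version has the minor advantage of producing a uniform pointwise estimate on a set in $\ef$, while the paper's avoids the combinatorial reindexing step entirely.
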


First  define,
 for $j \in \en$, $W_j^n:=  \prod_{ m \in P_j^n} \va z_n - w_m \vb^{l_m}$ and $W_j := \lim_{\ef} W_j^n$. Clearly, for $j \ge 2$, $W_j^n  = \Upsilon^{r_{j+1}}_n (G_0) / \Upsilon^{r_j}_n (G_0)$  and       $$W_j  =   \Upsilon^{r_{j+1}} (G_0) / \Upsilon^{r_j} (G_0) \ge \Upsilon (G_0) / \Upsilon^{r_j} (G_0)  > R_j .$$
 Now, ${\Upsilon'}^{r_i} (G_0)  =   \prod_{j=2}^{i-1} \pl W_j \pr^j \ge R $,
and we are done.
$\blacksquare$

\smallskip

Fix $i \ge 3$.  Note that,
if  $n \in D'_i$ and $m \in P_j^n$ for $2 \le j \le i$, then $d_m = j$, so
\begin{equation}\label{rezero}
\xi_{D^+ \pl z_n , \sqrt[k_n]{r_i} \pr} (g)  = {r_i}^{{\Lambda'}_n^{r_i} (G_0) /k_n} {\Upsilon'}_n^{r_i}  (G_0)  \prod_{\substack{\va z_j \vb = \va z_n \vb \\ j \in D_2' \\ j \neq n}} \va z_n - z_j \vb^{{\Lambda'}_j^{r_1} (G_0)}. \end{equation}
On the other hand, taking into account Claim~\ref{isere}, for $n \in  D'_i$,
\begin{equation}\label{rezzero}
 {r_i}^{{\Lambda'}_n^{r_i} (G_0) /k_n} \ge {\Sigma'}_n^{r_i} (G_0) \ge R .
\end{equation}

By Claim~\ref{nadienlafaku},
${\Lambda'}_j^{r_1} (G_0) \le 2 k_j$, so  $\prod_{\substack{ j \in D_2' \\ j \neq n}} \va z_n - z_j \vb^{{\Lambda'}_j^{r_1} (G_0)} \ge M^2$.
By
Inequalities~\ref{rezzero} and~\ref{rezero}, and taking into account Claim~\ref{hombro} and Corollary~\ref{conillosis},
 $      \zeta^{\kai, r_i}_{\ze, \ef}  (g) \ge M^2 R^2 \vc g \vd  $.
Therefore $\zeta^{\kai, 0}_{\ze, \ef}  (g) \ge M^2 R^2 \vc g \vd >0$.

Consequently $\ker \zeta^{\kai, 0}_{\ze, \ef}  \neq \ker \zeta^{\eli, s_0 }_{\we, \af}$, and we are done.
\end{proof}

We finally see that Theorem~\ref{zerolari}  cannot be extended in general to other seminorms obtained as the infimum of a decreasing family.

\begin{ex}\label{nicolas}
Let $(z_n)$ be a sequence in $\de$ such that $\lim_{n \ra \infty} \va z_n \vb =1$ and  $(\va z_n \vb)$ is strictly increasing.  Let $\ef$ be a nonprincipal ultrafilter in $\en$, and suppose that   $\kai \in \com_{\ef} (\ze)$ satisfies  $\lim_{n \ra \infty} k_n = + \infty$.    As in the proof of Theorem~\ref{kubzdelamagdalena}, the family $F$ of all seminorms $\zeta_{\ze, \ef}^{\emi, r}$ such that $\lim_{\ef} m_n = + \infty$, $\lim_{\ef} m_n / k_n < + \infty$ and $r \in (0,1)$ is totally ordered. As a consequence, $\varphi (f) := \inf_{\phi \in F}  \phi (f)$, $f \in \hac$, defines an element in $\ma$. It is obvious that $\zeta_{\ze, \ef}^{\mathbf{1}. 1} \le \varphi$. Following the same idea as in the proof of Theorem~\ref{liberban}, it is   straightforward to see that, given $f \in \hac$ and $\epsilon >0$, there exists $\zeta_{\ze, \ef}^{\emi, r} \in F$ with $\zeta_{\ze, \ef}^{\emi, r} (f) < \zeta_{\ze, \ef}^{\mathbf{1}. 1} (f) + \epsilon$. We conclude that $\varphi = \zeta_{\ze, \ef}^{\mathbf{1}. 1}$.
\end{ex}

\bigskip

We end the paper by listing some questions for which we do not have an answer.
\begin{enumerate} 
\item[1.] Does there exist $\varphi \in \mil$ such that $\ker \psi \neq \ker \varphi$ for all $\psi \in \mim$? Does $\varphi = \ker \zeta_{\ze, \ef}^{\kai, 0}$ satisfy this property when $\kai \in \com_{\ef} (\ze)$ and $\lim_{\ef} k_n = + \infty$?
\item[2.] More generally, does there exist $\varphi \in \ma$ with {\em unique}  nonmaximal  kernel, that is, such that $\ker \psi \neq \ker \varphi$ whenever $\psi \in \ma$ and $\psi \neq \varphi$?
\item[3.] Does there exist $\varphi \in \ma$ with {\em nonmaximal} kernel such that $f \in \ker \varphi$ and $f' \notin \ker \varphi$ for some $f \in \hac$?
\item[4.] Does there exist $\varphi \in \ma$ with {\em maximal} kernel such that $f' \in \ker \varphi$ whenever $f  \in \ker \varphi$? (stated  in \cite{vdP})
\end{enumerate}

\end{document}